\documentclass{tac}



\usepackage{amsmath}
\usepackage{amssymb}
\usepackage{enumitem}

\usepackage[all,cmtip]{xy}

\input diagxy


\usepackage[colorlinks=true]{hyperref}
\hypersetup{allcolors=[rgb]{0.1,0.1,0.4}}


\author{Cheng-Yong Du, Lili Shen and Xiaojuan Zhao}


\thanks{The first named author acknowledges the support of National Natural Science Foundation of China (11501393). The second named author acknowledges the support of National Natural Science Foundation of China (11701396) and the Fundamental Research Funds for the Central Universities (YJ201644). The third named author acknowledges the support of National Natural Science Foundation of China (11626195) and the Fundamental Research Funds for the Central Universities (2682017CX064).}

\address{School of Mathematics, Sichuan Normal University\\
 Chengdu 610068, China\\[5pt]
 School of Mathematics, Sichuan University\\
 Chengdu 610064, China\\[5pt]
 Department of Mathematics, Southwest Jiaotong University\\
 Chengdu 611756, China\\
}

\title{Spark complexes on good effective orbifold atlases categorically}


\copyrightyear{2018}


\keywords{good effective orbifold atlas, compatible system, spark complex, spark homomorphism, spark homotopy, spark character}
\amsclass{57R18, 18D05, 53C08}

\eaddress{cyd9966@hotmail.com \CR shenlili@scu.edu.cn \CR xjzhao@swjtu.edu.cn}



\newtheorem{thm}{Theorem}

\newtheorem{lem}{Lemma}
\newtheorem{prop}{Proposition}

\newtheoremrm{rem}{Remark}
\newtheoremrm{defn}{Definition}
\newtheoremrm{exmp}{Example}


\mathrmdef{Hom}
\mathbfdef{Set}


\DeclareMathOperator{\sgn}{sgn}
\DeclareMathOperator{\Diff}{Diff}
\DeclareMathOperator{\Tot}{{\sf Tot}}

\def\toRr{\mathrel{\equiv\joinrel\Rrightarrow}}

\newcommand{\op}{{\rm op}}

\renewcommand{\phi}{\varphi}
\newcommand{\al}{\alpha}
\newcommand{\be}{\beta}

\newcommand{\de}{\delta}

\newcommand{\Ga}{\Gamma}

\newcommand{\lam}{\lambda}

\newcommand{\si}{\sigma}
\newcommand{\Om}{\Omega}
\newcommand{\om}{{a}}
\renewcommand{\eta}{{b}}

\newcommand{\CD}{\mathcal{D}}

\newcommand{\CS}{\mathcal{S}}

\newcommand{\CU}{\mathcal{U}}
\newcommand{\CV}{\mathcal{V}}
\newcommand{\CW}{\mathcal{W}}
\newcommand{\CX}{\mathcal{X}}

\newcommand{\sC}{{\sf C}}

\newcommand{\sE}{{\sf E}}
\newcommand{\sF}{{\sf F}}

\newcommand{\sI}{{\sf I}}

\newcommand{\bbC}{\mathbb{C}}

\newcommand{\bbR}{\mathbb{R}}

\newcommand{\bbZ}{\mathbb{Z}}

\newcommand{\BH}{{\bf H}}

\newcommand{\FI}{\mathfrak{I}}
\newcommand{\FJ}{\mathfrak{J}}

\newcommand{\tU}{\tilde{U}}
\newcommand{\tV}{\tilde{V}}
\newcommand{\tW}{\tilde{W}}

\newcommand{\hH}{\widehat{\BH}}
\newcommand{\hI}{\hat{I}}

\newcommand{\tf}{\tilde{f}}

\newcommand{\tg}{\tilde{g}}

\newcommand{\hi}{\hat{i}}
\newcommand{\hj}{\hat{j}}
\newcommand{\hk}{\hat{k}}
\newcommand{\tx}{\tilde{x}}

\newcommand{\utf}{\underline{\tf}}
\newcommand{\utg}{\underline{\tg}}

\newcommand{\ual}{\underline{\al}}
\newcommand{\ube}{\underline{\be}}
\newcommand{\ubeal}{\underline{\be\al}}
\newcommand{\ubecal}{\underline{\be\circ\al}}
\newcommand{\od}{\overline{d}}
\newcommand{\oC}{\overline{\sC}}

\newcommand{\oD}{\overline{D}}
\newcommand{\oE}{\overline{\sE}}
\newcommand{\oF}{\overline{\sF}}
\newcommand{\oI}{\overline{\sI}}
\newcommand{\oS}{\overline{\CS}}

\newcommand{\oH}{\overline{\hH}}

\newcommand{\oOm}{\overline{\Om}}
\newcommand{\FVU}{{\sf Fin}(V_{\CU})}
\newcommand{\FVV}{{\sf Fin}(V_{\CV})}

\newcommand{\MVU}{{\sf MFin}(V_{\CU})}
\newcommand{\MVV}{{\sf MFin}(V_{\CV})}

\newcommand{\MmnVU}{{\sf MFin}^{m+n}(V_{\CU})}
\newcommand{\MpVU}{{\sf MFin}^p(V_{\CU})}

\newcommand{\MppVU}{{\sf MFin}^{p+1}(V_{\CU})}

\newcommand{\MpppVU}{{\sf MFin}^{p+2}(V_{\CU})}

\newcommand{\MpmVU}{{\sf MFin}^{p-1}(V_{\CU})}

\newcommand{\tidU}{\widetilde{1_{\CU}}}
\newcommand{\dsum}{\displaystyle\sum}
\newcommand{\mVU}{{\sf M}(V_{\CU})}
\newcommand{\mpVU}{{\sf M}^p(V_{\CU})}
\newcommand{\ophi}{\overline{\phi}}

\newcommand{\Cat}{{\bf Cat}}

\newcommand{\PreOrb}{{\bf Pre\text{-}Orb}}
\newcommand{\GPreOrb}{{\bf GOrbAtl}}
\newcommand{\SpCx}{{\bf SpCx}}

\newcommand{\Char}{{\sf Char}}

\newcommand{\Ab}{{\bf Ab}}
\newcommand{\GAb}{\Ab^{\bbZ}}
\newcommand{\GCRng}{{\bf GCRng}}

\numberwithin{equation}{section}

\allowdisplaybreaks

\begin{document}

\maketitle
\begin{abstract}
Good atlases are defined for effective orbifolds, and a spark complex is constructed on each good atlas. It is proved that this process is 2-functorial with compatible systems playing as morphisms between good atlases, and that the spark character 2-functor factors through this 2-functor.
\end{abstract}

\section{Introduction}

Spark complexes, which unify several approaches to differential characters as considered in \cite{Gillet1989,Harvey1977,Harvey2003}, were introduced by Harvey and Lawson \cite{Harvey2006} as an algebraic description of secondary geometric invariants of smooth manifolds initiated by Cheeger and Simons \cite{Cheeger1985}. Explicitly, on a smooth manifold $X$ several spark complexes can be constructed, e.g., de Rham--Federer spark complexes and Cheeger--Simons spark complexes; the associated graded abelian groups of spark characters of these spark complexes, now known as that of \emph{Harvey--Lawson characters} and denoted by $\hH^*(X)$, are isomorphic to each other. Moreover, $\hH^*(X)$ is a commutative graded ring that is isomorphic to the ring $H^*_{CS}(X)$ of Cheeger--Simons differential characters on $X$ \cite{Cheeger1985} and the smooth Deligne--Beilinson cohomology $\bigoplus_q H^q_{\CD}(X,\bbZ(q)^{\infty})$ of $X$ \cite{Hao2009}.

The aim of this paper is to investigate spark complexes on \emph{orbifolds}, a generalization of manifolds, from the viewpoint of category theory. More precisely, our purpose is to find an approach to construct spark complexes on orbifolds, so that the functoriality of this process can be established.

Orbifolds have been studied mainly from two perspectives, one of which is through the language of \emph{orbifold charts} and \emph{orbifold atlases} as in Satake's first paper \cite{Satake1956} that introduced orbifolds under the name ``$V$-manifolds'', and the other is through \emph{orbifold groupoids} (i.e., proper {\'e}tale Lie groupoids) as first discovered by Moerdijk and Pronk \cite{Moerdijk1997}; we refer to \cite{Adem2007,Moerdijk2002} for an overview of the theory of orbifolds. During recent years, secondary geometric invariants of orbifolds have been investigated through proper {\'e}tale Lie groupoids by several authors; see, e.g., \cite{Felisatti2012,Lupercio2001,Lupercio2006}. As for the approach of orbifold atlases, Du and Zhao constructed in \cite{Du2016} a spark complex on an effective orbifold equipped with a \emph{good atlas} generated from a \emph{good triangulation} (see \cite[Subsection 1.2]{Moerdijk1999}), which is the motivation of this paper and will be referred to in the appendix (see Eq. \eqref{SU-def}).

Although the construction of the spark complex \eqref{SU-def} is feasible for any effective orbifold, it is far from establishing the functoriality. The reasons are: first, it is difficult to find appropriate morphisms between orbifolds; second, the spark complex \eqref{SU-def} does not interact well with embeddings in the corresponding good atlas of the orbifold. Therefore, in order to achieve our goal, inspired by Tommasini's works \cite{Tommasini2012,Tommasini2016} we consider \emph{orbifold atlases} instead of orbifolds (i.e., equivalence classes of orbifold atlases) as objects in the source category, so that \emph{compatible systems} can be employed as their morphisms.

More specifically, we axiomatize good atlases generated from Moerdijk and Pronk's good triangulation (Definition \ref{good-atlas}) and consider the 2-category
$$\GPreOrb$$
with good effective orbifold atlases as objects, compatible systems as 1-cells and their natural transformations as 2-cells (Proposition \ref{GPreOrb-Cat}), upon which spark complexes can always be constructed. Then, for every good atlas $\CU$ we construct a new spark complex $\oS_{\CU}$ (Theorem \ref{oSU-def}), and it is proved that this process gives rise to a 2-functor
$$\oS:\GPreOrb^{\op}\to\SpCx$$
into the 2-category of spark complexes, spark homomorphisms and spark homotopy classes (Theorem \ref{oS-2-functor}), whose proof is the most challenging one in this paper.

Furthermore, the associated group $\oH^*(\CU)$ of spark characters of the spark complex $\oS_{\CU}$ is also 2-functorial as
$$\oH:\GPreOrb^{\op}\to\GAb,$$
where $\GAb$ is the category of graded abelian groups and their homomorphisms (considered as a 2-category with trivial 2-cells), and the \emph{spark character 2-functor} $\oH$  factors through $\oS$ (Theorem \ref{oH-2-functor}).

This paper is a first step towards the functoriality of spark complexes on orbifolds, and many things remain to be discovered. We leave here two questions that could be taken into consideration in future works:
\begin{enumerate}[label=(\arabic*)]
\item As atlases for ineffective orbifolds have been postulated by Pronk, Scull and Tommasini \cite{Pronk2016}, is it possible to define good atlases and establish the 2-functor $\oS:\GPreOrb^{\op}\to\SpCx$ for ineffective orbifolds?
\item In the context of manifolds, Simons and Sullivan proved that there is only one spark character functor making the \emph{Character Diagram} commutative \cite{Simons2008}. Is it possible to deduce similar uniqueness for our spark character 2-functor $\oH:\GPreOrb^{\op}\to\GAb$?
\end{enumerate}

We thank the anonymous referee for several helpful remarks.

\section{The 2-category of good effective orbifold atlases} \label{GPreOrb}

Throughout this paper, let $X,Y,Z$ denote paracompact, second countable and Hausdorff spaces. For a connected open subset $U\subseteq X$, an \emph{effective} (also \emph{reduced}) \emph{orbifold chart} (also \emph{uniformizing system}) of dimension $n$ over $U$ consists of
\begin{enumerate}[label=(\arabic*)]
\item a connected open subset $\tU\subseteq\bbR^n$,
\item a finite subgroup $G<\Diff(\tU)$ of smooth automorphisms of $\tU$ which acts on $\tU$ effectively,
\item a continuous, surjective and $G$-invariant map $\pi:\tU\to U$ which induces a homeomorphism between $\tU/G$ (equipped with the quotient topology) and $U$.
\end{enumerate}
For simplicity, in what follows, we refer to effective orbifold charts just as \emph{charts}. $U$ is called a \emph{uniformized set} if it is equipped with a chart $(\tU,G,\pi)$.

\begin{rem}
To facilitate our discussion, we do not exclude the case of $\tU=\varnothing$ which, together with the trivial group and the empty map, defines a chart over $U=\varnothing$.
\end{rem}

For open subsets $U\subseteq V\subseteq X$, an \emph{embedding} $\lam:(\tU,G,\pi)\to(\tV,H,\tau)$ of charts is a smooth embedding $\lam:\tU\to\tV$ that is a lifting of the inclusion $U\,\to/^(->/V$, i.e., such that the square
$$\bfig
\square/->`->`->`^(->/<500,400>[\tU`\tV`U`V;\lam`\pi`\tau`]
\efig$$
is commutative.

\begin{rem} \label{two-embedding-group}
If $\lam,\lam':(\tU,G,\pi)\to(\tV,H,\tau)$ are both embeddings, the effectiveness of the group actions guarantees the existence of a unique $h\in H$ with $\lam=h\circ\lam'$ (see \cite[Proposition A.1]{Moerdijk1997}); hence, an embedding $\lam:(\tU,G,\pi)\to(\tV,H,\tau)$ induces an injective group homomorphism, also denoted by $\lam:G\to H$.
\end{rem}

Given a chart $(\tU,G,\pi)$, a connected open subset $V\subseteq U=\pi(\tU)$ is uniformized by taking a connected component $\tV$ of $\pi^{-1}(V)$ and considering the group $H=\{g\in G\mid g(\tV)\subseteq\tV\}$; then,
\begin{equation} \label{V-H-piV}
(\tV,H,\pi|_{\tV})
\end{equation}
becomes a chart over $V$.\footnote{$H$ acts on $\tV$ effectively whenever $\tV\neq\varnothing$; in the case that $\tV=\varnothing$, one has to generate an effective action on $\tV$ from $H(=G)$, which is precisely the trivial group.} For each $x\in U$ and $\tx\in\pi^{-1}(x)$, the subgroup
$$G_x:=\{g\in G\mid g\cdot\tx=\tx\}$$
is called the \emph{isotropy subgroup} (also \emph{stabilizer subgroup}) at $x$, which is uniquely determined up to conjugacy in $G$.

\begin{defn} \label{atlas}  \cite{Adem2007,Satake1956}
An \emph{$n$-dimensional effective orbifold atlas} on $X$ is a family $\CU=\{(\tU_i,G_i,\pi_i)\mid i\in I\}$ of charts of dimension $n$ on $X$, such that
\begin{enumerate}[label=(\arabic*)]
\item $\{U_i=\pi_i(\tU_i)\mid i\in I\}$ covers $X$, and
\item for each $x\in U_i\cap U_j$, there exists $(\tU_k,G_k,\pi_k)\in\CU$ with $x\in U_k\subseteq U_i\cap U_j$ and embeddings
    $$(\tU_i,G_i,\pi_i)\toleft^{\lam_{ki}}(\tU_k,G_k,\pi_k)\to^{\lam_{kj}}(\tU_j,G_j,\pi_j).$$
\end{enumerate}
\end{defn}

All orbifold atlases considered in this paper are $n$-dimensional and effective, and thus we will just refer to them as \emph{atlases} if no confusion arises. It is easy to observe the following important fact:

\begin{prop} \label{atlas-Cat} {\rm\cite{Tommasini2012}}
An atlas $\CU$ is a small category with its charts as objects and their embeddings as morphisms.
\end{prop}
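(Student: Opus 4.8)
The plan is to verify directly the defining axioms of a small category for the evident data: the charts of $\CU$ as objects, embeddings of charts as morphisms, and composition taken to be composition of the underlying smooth maps. Since composition of smooth maps is strictly associative and has the identity maps as two-sided units, the associativity and unit axioms will be automatic once composition is shown to be well defined; so the real content is (i) that identities are embeddings, (ii) that embeddings are closed under composition, and (iii) smallness.

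First I would treat (i): for a chart $c=(\tU,G,\pi)$ of $\CU$, the identity $\id_{\tU}:\tU\to\tU$ is a smooth embedding and it lifts the inclusion $U\hookrightarrow U$ since $\pi\circ\id_{\tU}=\pi$, so $\id_{\tU}$ is an embedding $c\to c$, to be taken as $\id_c$. For (ii), suppose $\lam:(\tU,G,\pi)\to(\tV,H,\tau)$ and $\mu:(\tV,H,\tau)\to(\tW,K,\rho)$ are embeddings; their very existence presupposes $U\subseteq V$ and $V\subseteq W$, so the inclusion $U\hookrightarrow W$ is available. Then $\mu\circ\lam:\tU\to\tW$ is a composite of smooth embeddings, hence again a smooth embedding — its differential is injective at every point by the chain rule, and a composite of topological embeddings is a topological embedding — while chasing the two commutative squares defining $\lam$ and $\mu$ yields $\rho\circ\mu\circ\lam=\pi$ (identifying each projection with its composite with the relevant inclusion), so that $\mu\circ\lam$ lifts $U\hookrightarrow W$. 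Thus $\mu\circ\lam$ is an embedding $(\tU,G,\pi)\to(\tW,K,\rho)$, and composition is well defined.

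Finally, for (iii): the objects of $\CU$ form the set indexed by $I$, and for any two charts the embeddings between them form a subset of the set of smooth maps between the relevant open subsets of $\bbR^n$; hence $\CU$ is small. I do not anticipate any genuine obstacle here — the statement is essentially bookkeeping — and the only two points deserving a moment's care are the differential-topological fact that a composite of smooth embeddings is a smooth embedding, and the observation that the existence of an embedding $c\to c'$ already encodes $U\subseteq U'$, which is precisely what makes domains and codomains match up under composition. I would also record, via the uniqueness in Remark \ref{two-embedding-group}, that the induced injective group homomorphisms compose, $(\mu\circ\lam)_{*}=\mu_{*}\circ\lam_{*}$; this coherence is not needed for the present statement but is convenient in later sections.
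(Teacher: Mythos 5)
Your proof is correct and follows the only natural route: the paper itself gives no argument, stating the fact as an easy observation cited from \cite{Tommasini2012}, and the routine verification you carry out (identities lift inclusions, composites of smooth embeddings are smooth embeddings lifting the composed inclusions, and hom-sets are sets of smooth maps) is exactly what is implicitly intended. The closing remark on compatibility with the induced group homomorphisms of Remark \ref{two-embedding-group} is a harmless and indeed useful addition.
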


For atlases $\CU,\CU'$ on $X$, $\CU$ is called a \emph{refinement} of $\CU'$ if, for any $(\tU_i,G_i,\pi_i)\in\CU$, there exists $(\tU'_i,G'_i,\pi'_i)\in\CU'$ with an embedding $\lam:(\tU_i,G_i,\pi_i)\to(\tU'_i,G'_i,\pi'_i)$. Two atlases $\CU$, $\CV$ on $X$ are \emph{equivalent} if they have a common refinement. An equivalence class $[\CU]$ of atlases is an \emph{orbifold structure} on $X$, and the pair $\CX=(X,[\CU])$ is called an \emph{orbifold}.

The following Definition \ref{good-atlas} is motivated by the results in \cite[Subsection 1.2]{Moerdijk1999}. Explicitly, given an atlas $\CU$ on $X$, there is a \emph{good triangulation} $T_{\CU}$ of $X$ associated to $\CU$. Since the isotopy group in the interior of a simplex in $T_{\CU}$ is constant, which is a subgroup of the isotopy group of a vertex in $T_{\CU}$, a ``good atlas'' $\CV$ refining $\CU$ can be constructed as follows: Let $V_{T_{\CU}}$ denote the set of vertices in $T_{\CU}$, which can be assumed to be countable since $X$ is second countable. Then, for a $(p+1)$-tuple $J=(j_0,\dots,j_p)$ in $V_{T_{\CU}}$, one may consider the unique simplex in $T_{\CU}$ with vertices exactly being $(j_0,\dots,j_p)$ whenever it exists, and find its open star neighbourhood $V_J$, which is necessarily contractible. Since there is a chart $(\tU,G,\pi)\in\CU$ with $V_J\subseteq U=\pi(\tU)$, a chart $(\tV_J,H_J,\pi_J)$ over $V_J$ can be obtained as \eqref{V-H-piV} with $\tV_J$ being also contractible, in which way the desired atlas $\CV=\{(\tV_J,H_J,\pi_J)\mid J=(j_0,\dots,j_p),\ j_0,\dots,j_p\in V_{T_{\CU}},\ p\in\bbZ_{\geq 0}\}$ is constructed.

\begin{defn} \label{good-atlas}
A \emph{good atlas} $\CU=\{(\tU_I,G_I,\pi_I)\mid I\in\FVU\}$ on $X$ is an atlas such that
\begin{enumerate}[label=(\arabic*)]
\item \label{good-atlas:countable}
    $V_{\CU}$ is a countable set and $\FVU$ is the set of all finite subsets of $V_{\CU}$;
\item \label{good-atlas:cover}
    $\{U_i=\pi_i(\tU_i)\mid i\in V_{\CU}\}$ is a locally finite open cover of $X$, where $(\tU_i,G_i,\pi_i):=(\tU_{\{i\}},G_{\{i\}},\pi_{\{i\}})$;
\item \label{good-atlas:contractible}
    each $\tU_I$ ($I\in\FVU$) is contractible whenever it is non-empty, and so is $U_I=\pi_I(\tU_I)$;
\item \label{good-atlas:finite-intersection}
    $U_I=U_{I_0}\cap U_{I_1}\cap\dots\cap U_{I_p}$ $(p\geq 0)$ if $I=I_0\cup I_1\cup\dots\cup I_p$, and it is uniformized by $(\tU_I,G_I,\pi_I)\in\CU$;
\item \label{good-atlas:embedding}
    there exists an embedding $\lam_{IJ}:(\tU_I,G_I,\pi_I)\to(\tU_J,G_J,\pi_J)$ in $\CU$ whenever $U_I\subseteq U_J$.
\end{enumerate}
\end{defn}

For a good atlas $\CU$ on $X$, it is easily seen that there exist embeddings
$$\lam_{IJ}:(\tU_I,G_I,\pi_I)\to(\tU_J,G_J,\pi_J)$$
if (certainly, \emph{not} only if) $J\subseteq I\in\FVU$, which are morphisms between charts when $\CU$ is considered as a category (see Proposition \ref{atlas-Cat}). Moreover, $\{U_i=\pi_i(\tU_i)\mid i\in V_{\CU}\}$ is a \emph{good cover} of $X$ in the sense of \cite[Appendix, Section 4]{Petersen2006}.

Obviously, the atlases constructed from good triangulations are always good. Since there exists a good triangulation for any atlas, an orbifold $\CX$ is always equipped with a good atlas (cf. \cite[Proposition 1.2.3 and Corollary 1.2.5]{Moerdijk1999}). A good atlas that does not arise from a good triangulation is presented below:

\begin{exmp}
Let $\CX=[\bbC/\bbZ_a]$ $(a\geq 2)$ be the global quotient orbifold with the complex plane $\bbC$ as its underlying space, where $\bbZ_a$ acts on $\bbC$ by rotating $\dfrac{2\pi}{a}$. There is a good atlas $\CU$ on $\CX$ with exactly one non-empty chart
$$(\bbC,\bbZ_a,\tau),$$
where $\tau$ is a homeomorphism from $\bbC/\bbZ_a$ to $\bbC$. However, $\CU$ cannot be generated by any good triangulation. Indeed, good atlases on $\CX$ generated by good triangulations would contain infinitely many charts, since every triangulation of $\bbC$ has infinitely many vertices and edges.
\end{exmp}

In order to organize good atlases into a category, we now introduce compatible systems which are taken as morphisms in Tommasini's category $\PreOrb$ of effective complex orbifold atlases (see \cite{Tommasini2012}), though with a slight modification to fit into our context. We also refer to \cite{Chen2002,Chen2004} for the origin of this concept.

\begin{defn} \label{compat-sys}
For open subsets $U\subseteq X$ and $V\subseteq Y$ respectively uniformized by $(\tU,G,\pi)$ and $(\tV,H,\tau)$, a \emph{(local) lifting} of a continuous map $f:U\to V$ is a smooth function $\tf:\tU\to\tV$ with
\begin{equation} \label{lift-def}
\tau\circ\tf=f\circ\pi.
\end{equation}
$$\bfig
\square<700,400>[\tU`\tV`U`V;\tf`\pi`\tau`f]
\efig$$
For good atlases $\CU=\{(\tU_I,G_I,\pi_I)\mid I\in\FVU\}$ and $\CV=\{(\tV_K,H_K,\tau_K)\mid K\in\FVV\}$ respectively on $X$ and $Y$, a \emph{compatible system} $\tf:\CU\to\CV$ for a continuous map $f:X\to Y$ consists of
\begin{enumerate}[label=(\arabic*)]
\item \label{compat-sys:functor}
    a functor $\tf:\CU\to\CV$ between atlases (see Proposition \ref{atlas-Cat}), whose underlying map on objects is actually a map $\tf:\FVU\to\FVV$ between index sets, such that
    \begin{equation} \label{compat-sys:union}
    f(\pi_I(\tU_I))\subseteq\tau_{\tf I}(\tV_{\tf I})\quad\text{and}\quad\tf(I_1\cup\dots\cup I_p)=(\tf I_1)\cup\dots\cup(\tf I_p)
    \end{equation}
    for all $I,I_1,\dots,I_p\in\FVU$;
\item \label{compat-sys:lift}
    a family $\{\tf_I\mid I\in\FVU\}$ of liftings of $f|_{U_I}:U_I\to f(U_I)\,\to/^(->/V_{\tf I}$ from $(\tU_I,G_I,\pi_I)$ to $(\tV_{\tf I},H_{\tf I},\tau_{\tf I})$ satisfying
    \begin{equation} \label{compat-sys:lift-embed}
    \tf_J\circ\lam_{IJ}=(\tf\lam_{IJ})\circ\tf_I
    \end{equation}
    for all embeddings $\lam_{IJ}:(\tU_I,G_I,\pi_I)\to(\tU_J,G_J,\pi_J)$ in $\CU$, which can be regarded as a lifting of the obvious identity $f|_{U_J}|_{U_I}=f|_{U_I}$.
    $$\bfig
    \cube|blrb|<1500,900>[\tU_J`\tV_{\tf J}`U_J`V_{\tf J};\hskip 1cm\tf_J`\pi_J`\tau_{\tf J}`f|_{U_J}]%
    (700,300)|alrb|<1400,800>[\tU_I`\tV_{\tf I}`U_I`V_{\tf I};\tf_I`\pi_I`\tau_{\tf I}`f|_{U_I}\hskip 1cm]%
    |lrrr|/<-`<-`<-_)`<-_)/[\lam_{IJ}`\tf\lam_{IJ}``]
    \efig$$
\end{enumerate}
\end{defn}

\begin{rem}
Comparing to the definition of compatible systems in \cite{Tommasini2012}, we additionally require the map $\tf:\FVU\to\FVV$ between index sets to preserve finite unions, which is natural according to our definition of good atlases.
\end{rem}

%

Since compatible systems $\tf:\CU\to\CV$ are functors, it is possible to consider natural transformations between them:

\begin{defn} \label{compat-sys-nat}
Let $\tf^1,\tf^2:\CU\to\CV$ be compatible systems of the same continuous map $f:X\to Y$. A natural transformation $\al:\tf^1\to/=>/\tf^2$ of functors becomes a \emph{natural transformation of compatible systems} if
\begin{equation} \label{compat-sys-nat:lifting}
\tf^2_I=\al_I\circ\tf^1_I
\end{equation}
for all $I\in\FVU$. Explicitly, $\al$ is given by a family
$$\{\al_I:(\tV_{\tf^1 I},H_{\tf^1 I},\tau_{\tf^1 I})\to(\tV_{\tf^2 I},H_{\tf^2 I},\tau_{\tf^2 I})\mid I\in\FVU\}$$
of embeddings in $\CV$, such that the diagram
$$\bfig
\square<800,500>[\tV_{\tf^1 I}`\tV_{\tf^1 J}`\tV_{\tf^2 I}`\tV_{\tf^2 J};\tf^1\lam_{IJ}`\al_I`\al_J`\tf^2\lam_{IJ}]
\qtriangle(-800,0)/->`->`/<800,500>[\tU_I`\tV_{\tf^1 I}`\tV_{\tf^2 I};\tf^1_I`\tf^2_I`]
\ptriangle(800,0)/<-``->/<800,500>[\tV_{\tf^1 J}`\tU_J`\tV_{\tf^2 J};\tf^1_J``\tf^2_J]
\efig$$
is commutative for all embeddings $\lam_{IJ}:(\tU_I,G_I,\pi_I)\to(\tU_J,G_J,\pi_J)$ in $\CU$.
\end{defn}

We are now ready to present the 2-category\footnote{The readers are assumed to be familiar with basic notions of 2-categories; see, e.g. \cite{Borceux1994a,Leinster2004,MacLane1998}.} of good atlases:

\begin{prop} \label{GPreOrb-Cat}
With good atlases as objects, compatible systems as 1-cells and their natural transformations as 2-cells, one obtains a 2-category
$$\GPreOrb.$$
\end{prop}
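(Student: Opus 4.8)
The plan is to verify the axioms of a 2-category directly, reusing the fact that each atlas is already a small category (Proposition \ref{atlas-Cat}) and that compatible systems are, by Definition \ref{compat-sys}, nothing but functors between these categories decorated with extra liftings. The essential point is that composition and units at the level of the underlying functors are already understood; the work is to check that the decorations (the index-set maps $\tf:\FVU\to\FVV$ and the families $\{\tf_I\}$ of local liftings) compose well and that the axioms descend to homotopy-free identities.

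First I would define identities. For a good atlas $\CU$ the identity 1-cell $\id_{\CU}:\CU\to\CU$ is given by the identity functor on $\CU$ (hence the identity map on $\FVU$, which trivially preserves finite unions) together with the family $\{\id_{\tU_I}\mid I\in\FVU\}$ of identity liftings of $\id_X|_{U_I}$; Eqs. \eqref{compat-sys:union} and \eqref{compat-sys:lift-embed} hold on the nose. For 2-cells, the identity natural transformation on a compatible system $\tf$ has components $\{\id_{\tV_{\tf I}}\}$, and \eqref{compat-sys-nat:lifting} is immediate.

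Next I would define composition. Given compatible systems $\tf:\CU\to\CV$ of $f:X\to Y$ and $\tg:\CV\to\CW$ of $g:Y\to Z$, the composite $\tg\tf:\CU\to\CW$ is the composite functor, whose index map is $\tg\circ\tf:\FVU\to\FVW$ (a composite of finite-union-preserving maps, hence finite-union-preserving), equipped with the liftings $\{\tg_{\tf I}\circ\tf_I\mid I\in\FVU\}$; one checks \eqref{lift-def} for $g\circ f$ by pasting the two defining squares, the first inclusion in \eqref{compat-sys:union} by $g(f(\pi_I(\tU_I)))\subseteq g(\tau_{\tf I}(\tV_{\tf I}))\subseteq\rho_{\tg\tf I}(\tW_{\tg\tf I})$ (writing $(\tW_K,\cdot,\rho_K)$ for the charts of $\CW$), and \eqref{compat-sys:lift-embed} for $\tg\tf$ by stacking the cube of $\tf$ over that of $\tg$ and using functoriality of $\tg$. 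For 2-cells I would spell out horizontal composition $\be\ast\al$ of $\al:\tf^1\Rightarrow\tf^2$ and $\be:\tg^1\Rightarrow\tg^2$ via the usual Godement formula $(\be\ast\al)_I=\be_{\tf^2 I}\circ\tg^1(\al_I)=\tg^2(\al_I)\circ\be_{\tf^1 I}$, and vertical composition componentwise; in each case one verifies \eqref{compat-sys-nat:lifting} using the lifting conditions of the factors, noting that $\tg^1(\al_I)$ and $\tg^2(\al_I)$ are the images under the functors $\tg^1,\tg^2$ of the embedding $\al_I$, hence embeddings in $\CW$.

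The remaining axioms—associativity and unitality of both compositions, and the interchange law—follow from the corresponding axioms in $\Cat$ (applied to the underlying functors and natural transformations), together with the observation that on the lifting data these identities hold strictly because the liftings for a given continuous map between fixed charts are rigid: whenever two compatible systems have the same underlying functor and the same continuous map, the defining equation \eqref{lift-def} together with Remark \ref{two-embedding-group} (uniqueness of the group element relating two liftings) pins things down, so there is no room for discrepancy. I expect the main obstacle to be the bookkeeping in horizontal composition of 2-cells: one must check that the Godement components $\be_{\tf^2 I}\circ\tg^1(\al_I)$ are embeddings in $\CW$ and that the large commuting diagram of Definition \ref{compat-sys-nat} for $\be\ast\al$ follows from the diagrams for $\al$ and $\be$ plus functoriality of $\tg^1$ and $\tg^2$ and naturality of $\be$; and, in parallel, that equation \eqref{compat-sys-nat:lifting} for $\be\ast\al$, namely $(\tg^2\tf^2)_I=(\be\ast\al)_I\circ(\tg^1\tf^1)_I$, is obtained by combining $\tf^2_I=\al_I\circ\tf^1_I$, $\tg^2_K=\be_K\circ\tg^1_K$ (at $K=\tf^1 I$), and the compatibility \eqref{compat-sys:lift-embed} of $\tg^1$ with the embedding $\al_I$. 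Everything else is a routine transcription of the $\Cat$-level 2-category structure onto the decorated morphisms.
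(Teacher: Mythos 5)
Your proposal is correct and follows essentially the same route as the paper: define identities and composites of compatible systems via composite functors and composite liftings $\tg_{\tf I}\circ\tf_I$, observe that 2-cells are just natural transformations of functors satisfying \eqref{compat-sys-nat:lifting}, and check closure of this condition under vertical and horizontal (Godement) composition using \eqref{compat-sys-nat:lifting} for the factors together with \eqref{compat-sys:lift-embed} applied to the embedding $\al_I$, the remaining axioms being inherited from $\Cat$. One small caveat: the strictness of associativity, unitality and interchange on the lifting data needs no appeal to ``rigidity'' of liftings (which are in general not unique) --- it holds simply because the lifting families of composites are defined by ordinary composition of maps, which is strictly associative and unital.
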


\begin{proof}
Given compatible systems $\tf:\CU\to\CV$, $\tg:\CV\to\CW$ respectively for continuous maps $f:X\to Y$, $g:Y\to Z$, their composite is given by the functor $\tg\circ\tf:\CU\to\CW$ (which clearly satisfies \eqref{compat-sys:union}) and the family
\begin{equation} \label{composite-lift}
\{(\tg\circ\tf)_I:=\tg_{\tf I}\circ\tf_I\mid I\in\FVU\}
\end{equation}
of liftings of $(g\circ f)|_{U_I}:U_I\to g\circ f(U_I)\,\to/^(->/W_{\tg\tf I}$ satisfying \eqref{compat-sys:lift-embed}; indeed, the diagram
$$\bfig
\iiixii|bblrraa|<1000,500>[\tU_I`\tV_{\tf I}`\tW_{\tg\tf I}`\tU_J`\tV_{\tf J}`\tW_{\tg\tf J};\tf_I`\tg_{\tf I}`\lam_{IJ}`\tf{\lam_{IJ}}`\tg\tf\lam_{IJ}`\tf_J`\tg_{\tf J}]
\square|abbb|/{@{->}@/^1.5em/}```{@{->}@/_1.5em/}/<2000,500>[\tU_I`\tW_{\tg\tf I}`\tU_J`\tW_{\tg\tf J};(\tg\circ\tf)_I```(\tg\circ\tf)_J]
\efig$$
is commutative for all embeddings $\lam_{IJ}:(\tU_I,G_I,\pi_I)\to(\tU_J,G_J,\pi_J)$ in $\CU$, by applying \eqref{compat-sys:lift-embed} to $\tf$ and $\tg$. With the identity compatible system $1_{\CU}$ for the identity map $1_X:X\to X$ given by the identity functor on $\CU$ and the identity liftings $\{(\tidU)_I=1_{\tU_I}:\tU_I\to\tU_I\mid I\in\FVU\}$, one obviously obtains a category $\GPreOrb$ of good atlases and compatible systems.

Note that natural transformations of compatible systems are a special kind of natural transformations of functors. In order to show that $\GPreOrb$ has a 2-category structure with 2-cells given by natural transformations of compatible systems, it suffices to prove that they are closed under (vertical and horizontal) compositions of natural transformations as in the 2-category $\Cat$ of small categories, functors and natural transformations.

Given natural transformations $\tf^1\to/=>/^\al\tf^2\to/=>/^\be\tf^3:\CU\to\CV$ of compatible systems of the same continuous map $f:X\to Y$, the (vertical) composition $\be\al:\tf^1\to/=>/\tf^3$ satisfies \eqref{compat-sys-nat:lifting} since the diagram
$$\bfig
\btriangle<700,500>[\tU_I`\tV_{\tf^1 I}`\tV_{\tf^2 I};\tf^1_I`\tf^2_I`\al_I]
\morphism(0,500)/{@{->}@/^1em/}/<1500,-500>[\tU_I`\tV_{\tf^3 I};\tf^3_I]
\morphism(700,0)|b|<800,0>[\tV_{\tf^2 I}`\tV_{\tf^3 I};\be_I]
\efig$$
is commutative for all $I\in\FVU$, by applying \eqref{compat-sys-nat:lifting} to $\al$ and $\be$. Hence, $\be\al$ is a natural transformation of compatible systems.

Given natural transformations $\al:\tf^1\to/=>/\tf^2:\CU\to\CV$ and $\be:\tg^1\to/=>/\tg^2:\CV\to\CW$ of compatible systems, respectively of the continuous maps $f:X\to Y$ and $g:Y\to Z$, the (horizontal) composition $\be\circ\al:\tg^1\circ\tf^1\to/=>/\tg^2\circ\tf^2$ satisfies \eqref{compat-sys-nat:lifting} since the diagram
$$\bfig
\qtriangle<800,400>[\tU_I`\tV_{\tf^1 I}`\tV_{\tf^2 I};\tf^1_I`\tf^2_I`\al_I]
\qtriangle(800,0)<1000,400>[\tV_{\tf^1 I}`\tW_{\tg^1\tf^1 I}`\tW_{\tg^2\tf^1 I};\tg^1_{\tf^1 I}`\tg^2_{\tf^1 I}`\be_{\tf^1 I}]
\qtriangle(800,-400)/`->`->/<1000,400>[\tV_{\tf^2 I}`\tW_{\tg^2\tf^1 I}`\tW_{\tg^2\tf^2 I};`\tg^2_{\tf^2 I}`\tg^2\al_I]
\qtriangle(0,-400)/{@{->}@/^2.5em/}`{@{->}@/_3em/}`{@{->}@/^4em/}/<1800,800>[\tU_I`\tW_{\tg^1\tf^1 I}`\tW_{\tg^2\tf^2 I};(\tg^1\circ\tf^1)_I`(\tg^2\circ\tf^2)_I`(\be\circ\al)_I]
\efig$$
is commutative for all $I\in\FVU$, by applying \eqref{compat-sys-nat:lifting} to $\al$, $\be$ and \eqref{compat-sys:lift-embed} to the embedding $\al_I:(\tV_{\tf^1 I},H_{\tf^1 I},\tau_{\tf^1 I})\to(\tV_{\tf^2 I},H_{\tf^2 I},\tau_{\tf^2 I})$. Hence, $\be\circ\al$ is a natural transformation of compatible systems, completing the proof.
\end{proof}

\section{The 2-category of spark complexes} \label{SpCx}

A \emph{homological spark complex} \cite{Hao2009,Harvey2006,Harvey2003}, or \emph{spark complex} for short, is a triple $(\sF^*,\sE^*,\sI^*)$ of cochain complexes\footnote{All cochain complexes considered in this paper are of abelian groups and bounded below.}, such that
\begin{enumerate}[label=(\arabic*)]
\item $\sI^*$ and $\sE^*$ are both subcomplexes of $\sF^*$,
\item $\sI^k\cap\sE^k=\{0\}$ for $k>0$, $\sF^k=\sE^k=\sI^k=\{0\}$ for $k<0$, and
\item $H^*(\sE^*)\cong H^*(\sF^*)$.
\end{enumerate}
A \emph{spark} of degree $k$ is an element $a\in\sF^k$ satisfying the \emph{spark equation}
\begin{equation} \label{spark-eq}
da=e-r
\end{equation}
for some $e\in\sE^{k+1}$, $r\in\sI^{k+1}$ (which are necessarily unique, see \cite[Lemma 1.2]{Harvey2006}). Sparks $a,a'$ of degree $k$ are \emph{equivalent} if
\begin{equation} \label{spark-equiv}
a-a'=db+s
\end{equation}
for some $b\in\sF^{k-1}$, $s\in\sI^k$. Given a spark $a\in\sF^k$, the equivalence class containing $a$ is denoted by $[a]$, called a \emph{spark character} (also \emph{spark class}). We write $\hH^k(\sF^*,\sE^*,\sI^*)$ for the group of spark characters of degree $k$, and
$$\hH(\sF^*,\sE^*,\sI^*)=\bigoplus_{k\in\bbZ}\hH^k(\sF^*,\sE^*,\sI^*)$$
for the graded abelian group of all spark characters on a spark complex $(\sF^*,\sE^*,\sI^*)$, where $\bbZ$ is the discrete set of integers.


A \emph{spark homomorphism} $f:(\sF^*,\sE^*,\sI^*)\to(\oF^*,\oE^*,\oI^*)$ of spark complexes is a cochain map $f:\sF^*\to\oF^*$ such that the diagram
$$\bfig
\iiixii|aalrrbb|/^(->`<-^)`->`->`->`^(->`<-^)/<500,400>[\sI^*~`\sF^*`~\sE^*`\oI^*~`\oF^*`~\oE^*;``f|_{\sI^*}`f`f|_{\sE^*}``]
\efig$$
is commutative or, equivalently, such that $f(\sI^*)$ and $f(\sE^*)$ are subcomplexes of $\oI^*$ and $\oE^*$, respectively.

Similar to the obvious way of organizing cochain complexes into a 2-category, we define homotopies between spark homomorphisms as follows:

\begin{defn} \label{spark-homo}
A \emph{spark homotopy} $\Phi:f\to/=>/g:(\sF^*,\sE^*,\sI^*)\to(\oF^*,\oE^*,\oI^*)$ between spark homomorphisms is a cochain homotopy $\Phi:f\to/=>/g:\sF^*\to\oF^*$ which vanishes on $\sE^*$ and restricts to a cochain homotopy $\Phi|_{\sI^*}:f|_{\sI^*}\to/=>/g|_{\sI^*}:\sI^*\to\oI^*$; that is, a graded group homomorphism $\Phi:\sF^*\to\oF^{*-1}$ with
$$\od\circ\Phi+\Phi\circ d=g-f,\quad\Phi|_{\sE^*}=0$$
and $\Phi(\sI^*)$ being a graded subgroup of $\oI^{*-1}$, where $d$ and $\od$ are differentials on $\sF^*$ and $\oF^*$, respectively. $f$ is \emph{homotopic} to $g$, denoted by $f\sim g$, if there exists a spark homotopy $\Phi:f\to/=>/g$.

Moreover, a \emph{homotopy} $\Ga:\Phi\toRr\Psi$ between spark homotopies is precisely a homotopy of cochain homotopies subject to the same restrictions; hence, $\Ga$ is a graded group homomorphism $\Ga:\sF^*\to\oF^{*-2}$ with
$$\od\circ\Ga-\Ga\circ d=\Psi-\Phi,\quad\Ga|_{\sE^*}=0$$
and $\Ga(\sI^*)$ being a graded subgroup of $\oI^{*-2}$. $\Phi$ is \emph{homotopic} to $\Psi$, denoted by $\Phi\sim\Psi$, if there exists a homotopy $\Ga:\Phi\toRr\Psi$; the induced equivalence class, i.e., the \emph{spark homotopy class}, of a spark homotopy $\Phi$, is denoted by $[\Phi]$.
\end{defn}


\begin{prop} \label{SpCx-Cat}
With spark complexes as objects, spark homomorphisms as 1-cells and spark homotopy classes as 2-cells, one obtains a 2-category
$$\SpCx.$$
\end{prop}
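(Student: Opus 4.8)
The strategy is to mirror the proof of Proposition \ref{GPreOrb-Cat}. A spark homomorphism is in particular a cochain map, a spark homotopy is in particular a cochain homotopy, and a homotopy of spark homotopies is in particular a homotopy of cochain homotopies; so, since cochain complexes, cochain maps and cochain homotopy classes form the evident 2-category $\ChCx$, it suffices to prove that each of these three classes of data is closed under the relevant compositions and contains the relevant identities of $\ChCx$. The associativity and unit laws for the two compositions, together with the interchange law (all of which are equations among 2-cells), are then inherited from $\ChCx$.

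First, the underlying 1-category. If $f:(\sF^*,\sE^*,\sI^*)\to(\oF^*,\oE^*,\oI^*)$ and $g:(\oF^*,\oE^*,\oI^*)\to(\ooF^*,\ooE^*,\ooI^*)$ are spark homomorphisms, then the cochain map $g\circ f$ satisfies $g(f(\sI^*))\subseteq g(\oI^*)\subseteq\ooI^*$ and $g(f(\sE^*))\subseteq g(\oE^*)\subseteq\ooE^*$, hence is a spark homomorphism; the identity cochain map $1_{\sF^*}$ is trivially one; and associativity and unit laws are those of composition of maps. So spark complexes and spark homomorphisms form a category.

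Next, the two compositions of 2-cells. Given spark homotopies $\Phi:f\to/=>/g$ and $\Psi:g\to/=>/h$ between spark homomorphisms from $(\sF^*,\sE^*,\sI^*)$ to $(\oF^*,\oE^*,\oI^*)$, their vertical composite, the cochain homotopy $\Phi+\Psi:f\to/=>/h$, vanishes on $\sE^*$ because $\Phi$ and $\Psi$ do, and carries $\sI^*$ into $\oI^{*-1}$ because $\Phi(\sI^*)$ and $\Psi(\sI^*)$ are graded subgroups of $\oI^{*-1}$; hence it is again a spark homotopy, and so is the zero homotopy $0:f\to/=>/f$. Given spark homotopies $\Phi:f\to/=>/g:(\sF^*,\sE^*,\sI^*)\to(\oF^*,\oE^*,\oI^*)$ and $\Phi':f'\to/=>/g':(\oF^*,\oE^*,\oI^*)\to(\ooF^*,\ooE^*,\ooI^*)$, their horizontal composite, which may be taken to be the cochain homotopy $f'\circ\Phi+\Phi'\circ g:f'\circ f\to/=>/g'\circ g$ given by the usual formula for horizontal composition of cochain homotopies, vanishes on $\sE^*$ since $\Phi(\sE^*)=0$ and $g(\sE^*)\subseteq\oE^*$ with $\Phi'(\oE^*)=0$, and maps $\sI^*$ into $\ooI^{*-1}$ since $f'(\Phi(\sI^*))\subseteq f'(\oI^{*-1})\subseteq\ooI^{*-1}$ and $\Phi'(g(\sI^*))\subseteq\Phi'(\oI^*)\subseteq\ooI^{*-1}$; hence it too is a spark homotopy.

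Finally, passage to spark homotopy classes. Running the same computations with a homotopy $\Ga$ of spark homotopies in place of a spark homotopy shows that the vertical sum of two such homotopies, and the horizontal whiskerings of such a homotopy by a spark homomorphism, again vanish on $\sE^*$ and preserve $\sI^*$; hence the two compositions descend to spark homotopy classes, so that $[\Psi]\cdot[\Phi]:=[\Phi+\Psi]$ and $[\Phi']\circ[\Phi]:=[f'\circ\Phi+\Phi'\circ g]$ are well defined, with $[0]$ serving as the identity 2-cell. Since the cells, compositions and identities of $\SpCx$ are thereby those of $\ChCx$, the remaining 2-category axioms for $\SpCx$ follow from those of $\ChCx$. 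I expect the main, indeed the only, obstacle to be bookkeeping: at every step one must re-verify that the two conditions defining a spark homotopy, namely vanishing on $\sE^*$ and preservation of $\sI^*$, together with their analogues for homotopies of spark homotopies, are stable under the operation at hand; once this is done, no 2-categorical computation not already present in $\ChCx$ is required.
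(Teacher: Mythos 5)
Your proposal is correct and follows essentially the same route as the paper: both reduce everything to the 2-category of cochain complexes and then check that the spark conditions (vanishing on $\sE^*$, preservation of $\sI^*$) are stable under vertical sums, under the whiskered horizontal composite (your $f'\circ\Phi+\Phi'\circ g$ is one of the paper's two representatives $\Psi\circ g+h\circ\Phi$), and under the homotopies $\Ga+\Ga'$ and $\Ga'\circ g+h\circ\Ga$ that witness independence of the chosen representatives. The additional bookkeeping you do at the 1-categorical level and for identity cells is harmless and matches what the paper leaves implicit.
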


\begin{proof}
Based on the structure of the 2-category of cochain complexes, the only non-trivial part of this proof is the closedness of spark homotopy classes under (vertical and horizontal) compositions of cochain homotopies.

Given spark homotopies $f\to/=>/^{\Phi}g\to/=>/^{\Psi}h:(\sF^*,\sE^*,\sI^*)\to(\oF^*,\oE^*,\oI^*)$, their vertical composition $\Psi+\Phi:f\to/=>/h$ is clearly a spark homotopy. If $\Phi\sim\Phi'$ and $\Psi\sim\Psi'$, we find $\Ga:\Phi\toRr\Phi'$ and $\Ga':\Psi\toRr\Psi'$, then it is easy to see that $\Ga+\Ga':\Psi+\Phi\toRr\Psi'+\Phi'$ is a homotopy of spark homotopies, whence $\Phi+\Psi\sim\Phi'+\Psi'$.

Given spark homotopies $\Phi:f\to/=>/g:(\sF_1^*,\sE_1^*,\sI_1^*)\to(\sF_2^*,\sE_2^*,\sI_2^*)$ and $\Psi:h\to/=>/k:(\sF_2^*,\sE_2^*,\sI_2^*)\to(\sF_3^*,\sE_3^*,\sI_3^*)$, the horizontal composition of the corresponding homotopy classes
$$[\Psi\circ\Phi]:=[\Psi\circ f+k\circ\Phi]=[\Psi\circ g+h\circ\Phi]:h\circ f\to/=>/k\circ g$$
is clearly a spark homotopy class. If $\Phi\sim\Phi'$ and $\Psi\sim\Psi'$, we find $\Ga:\Phi\toRr\Phi'$ and $\Ga':\Psi\toRr\Psi'$, then it is straightforward to check that
$$\Ga'\circ g+h\circ\Ga:(\Psi\circ g+h\circ\Phi)\toRr(\Psi'\circ g+h\circ\Phi')$$
is a homotopy of spark homotopies, showing that $\Psi\circ\Phi\sim\Psi'\circ\Phi'$.
\end{proof}

It is clear that every 2-cell in $\SpCx$ is an isomorphism, and moreover:

\begin{prop} \label{Char-functor}
A spark homomorphism $f:(\sF^*,\sE^*,\sI^*)\to(\oF^*,\oE^*,\oI^*)$ induces a homomorphism
$$f_*:\hH(\sF^*,\sE^*,\sI^*)\to\hH(\oF^*,\oE^*,\oI^*),\quad [a]\mapsto[fa]$$
of the associated graded abelian groups of spark characters, and
$$f_*=g_*$$
if there exists a spark homotopy $\Phi:f\to/=>/g:(\sF^*,\sE^*,\sI^*)\to(\oF^*,\oE^*,\oI^*)$.
\end{prop}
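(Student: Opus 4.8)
The plan is to check, in turn, that $f$ carries sparks to sparks, that the assignment $[a]\mapsto[fa]$ is well-defined on spark characters, that it is additive, and finally that it is unchanged if $f$ is replaced by a spark-homotopic $g$. Each of these is a short diagram chase that uses nothing beyond the fact that a spark homomorphism is a cochain map sending $\sI^*$ into $\oI^*$ and $\sE^*$ into $\oE^*$.

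First I would show $f$ sends sparks to sparks: if $a\in\sF^k$ satisfies $da=e-r$ with $e\in\sE^{k+1}$ and $r\in\sI^{k+1}$, then since $f$ commutes with differentials, $\od(fa)=f(e)-f(r)$, and because $f$ is a spark homomorphism $f(e)\in\oE^{k+1}$ and $f(r)\in\oI^{k+1}$; so $fa$ satisfies the spark equation \eqref{spark-eq} and is a spark of degree $k$. Next, if $a\sim a'$, say $a-a'=db+s$ with $b\in\sF^{k-1}$ and $s\in\sI^k$, then $fa-fa'=\od(fb)+f(s)$ with $fb\in\oF^{k-1}$ and $f(s)\in\oI^k$, so $[fa]=[fa']$ by \eqref{spark-equiv}; hence $f_*[a]:=[fa]$ is well-defined. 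Additivity of $f_*$ then follows from additivity of $f$, once one records that the termwise sum of two degree-$k$ sparks is again a degree-$k$ spark (with spark data the sum of the respective data) — this is also what makes $\hH^k$ an abelian group in the first place.

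For the second assertion, let $\Phi$ be a spark homotopy from $f$ to $g$, so $\od\circ\Phi+\Phi\circ d=g-f$ on $\sF^*$, $\Phi$ vanishes on $\sE^*$, and $\Phi(\sI^*)$ is a graded subgroup of $\oI^{*-1}$. Given a spark $a\in\sF^k$ with $da=e-r$ as above, applying the homotopy relation gives $ga-fa=\od(\Phi a)+\Phi(e)-\Phi(r)$. Since $e\in\sE^{k+1}$ we have $\Phi(e)=0$, and since $r\in\sI^{k+1}$ we have $\Phi(r)\in\oI^{k}$; together with $\Phi a\in\oF^{k-1}$ this exhibits $ga-fa$ in the form $\od b+s$ of \eqref{spark-equiv}, so $ga\sim fa$ and $g_*[a]=[ga]=[fa]=f_*[a]$.

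None of the computations is difficult; the only thing that needs care is keeping track of degrees while using the two defining restrictions on a spark homotopy (that it kills $\sE^*$ and sends $\sI^*$ into $\oI^{*-1}$), since these are exactly what forces the difference $ga-fa$ into the shape that \eqref{spark-equiv} demands. Apart from that bookkeeping there is no real obstacle.
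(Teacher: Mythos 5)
Your proposal is correct and follows essentially the same route as the paper's proof: well-definedness via $fa-fa'=\od(fb)+fs$ with $fs\in\oI^k$, and homotopy-invariance via $ga-fa=\od(\Phi a)+\Phi e-\Phi r=\od(\Phi a)-\Phi r$ using $\Phi|_{\sE^*}=0$ and $\Phi r\in\oI^*$. The additional remarks (that $f$ carries sparks to sparks and that $f_*$ is additive) are routine checks the paper leaves implicit, so there is nothing to change.
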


\begin{proof}
$f_*$ is well-defined since $a=db+s$ for some $b\in\sF^{k-1}$, $s\in\sI^k$ implies
$$fa=f(db+s)=\od(fb)+fs$$
with $fs\in\oI^k$. If $\Phi:f\to/=>/g$ is a spark homotopy, for any spark $a\in\sF^*$ with $da=e-r$, $e\in\sE^*$, $r\in\sI^*$ one has
$$ga-fa=\od(\Phi a)+\Phi(da)=\od(\Phi a)+\Phi e-\Phi r=\od(\Phi a)-\Phi r,$$
then it follows from $\Phi r\in\oI^*$ that $[ga]=[fa]$. Hence $f_*=g_*$.
\end{proof}

Let $\GAb$ denote the category of $(\bbZ\text{-})$graded abelian groups and their homomorphisms. If one considers $\GAb$ as a 2-category only equipped with trivial 2-cells, then Proposition \ref{Char-functor} in fact defines a 2-functor
\begin{equation} \label{Char}
\Char:\SpCx\to\GAb
\end{equation}
that sends each spark homomorphism $f:(\sF^*,\sE^*,\sI^*)\to(\oF^*,\oE^*,\oI^*)$ to the homomorphism
$$f_*:\hH(\sF^*,\sE^*,\sI^*)\to\hH(\oF^*,\oE^*,\oI^*)$$
of graded abelian groups, and sends each spark homotopy class $[\Phi]:f\to/=>/g$ to the identity 2-cell on $f_*=g_*$.

\section{Spark complexes on good atlases} \label{Spark-good-atlas}

The aim of this section is to construct a spark complex on every good atlas. Since we work on \emph{orbifold atlases} instead of orbifolds (i.e., equivalence classes of orbifold atlases), tangent and cotangent bundles of orbifolds will not be necessary for our discussion.

Let $\CU=\{(\tU_I,G_I,\pi_I)\mid I\in\FVU\}$ be a good atlas. Recall that a \emph{differential $q$-form} \cite{Tu2011} on $\tU_I$ assigns to each $x\in\tU_I$ an alternating $q$-linear map on its tangent space $T_x\tU_I$; that is, a $q$-linear map $\om_I:(T_x\tU_I)^q\to\bbR$ satisfying
$$\om_I(v_{\si(1)},\dots,v_{\si(q)})=(\sgn\si)\om_I(v_1,\dots,v_q)$$
for all $v_1,\dots,v_q\in T_x\tU_I$ and permutations $\si$ of the set $\{1,\dots,k\}$. Denoting by $\Om^q(\tU_I)$ the set of differential $q$-forms on $\tU_I$, a \emph{$q$-form} \cite{Satake1956} on the orbifold atlas $\CU$ is a family
$$\{\om_I\in\Om^q(\tU_I)\mid I\in\FVU\}$$
of differential $q$-forms such that
\begin{enumerate}[label=(\arabic*)]
\item each $\om_I$ is $G_I$-invariant, and
\item $\lam_{IJ}^*(\om_J)=\om_I$ for any embedding $\lam_{IJ}:(\tU_I,G_I,\pi_I)\to(\tU_J,G_J,\pi_J)$, where $\lam_{IJ}^*(\om_J)$ is the pullback\footnote{The pullback of differential forms should be carefully distinguished from the same terminology in category theory.} of $\om_J$ along $\lam_{IJ}$ given by
    $$\lam_{IJ}^*(\om_J)(v_1,\dots,v_q)=\om_J((\lam_{IJ})_{*,x}v_1,\dots,(\lam_{IJ})_{*,x} v_q)$$
    for all $x\in\tU_I$ and $v_1,\dots,v_q\in T_x\tU_I$, and the linear map $(\lam_{IJ})_{*,x}:T_x\tU_I\to T_{\lam_{IJ}x}\tU_J$ of tangent spaces is the differential of $\lam_{IJ}$ at $x$.
\end{enumerate}

We write $\oOm^q(\CU)$ for the set of $q$-forms on $\CU$. Obviously, the exterior derivative $d$ acts on $\oOm^*(\CU)$, and the usual wedge product ``$\wedge$'' can be restricted to $\oOm^*(\CU)$. Thus, we obtain a DGA (differential graded algebra)
\begin{equation} \label{oOm-def}
(\oOm^*(\CU),d,\wedge).
\end{equation}

Now we are ready to present the construction of a spark complex
$$\oS_{\CU}=(\oF^*_{\CU},\oE^*_{\CU},\oI^*_{\CU})$$
associated to the given good atlas $\CU$. Let
$$\MVU$$
denote the \emph{free monoid} on the set $\FVU$, whose elements are \emph{strings} (or \emph{words})
$$\FI=I_0\dots I_p$$
consisting of elements of $\FVU$. For $\FI,\FJ\in\MVU$, we write $\FI\FJ$ for the \emph{string concatenation} of $\FI$ and $\FJ$, i.e., the monoid multiplication of $\MVU$, whose unit is obviously given by the empty string.

\begin{rem} \label{FI-notation}
In order to facilitate our discussions below, we introduce here several notations for each string $\FI=I_0\dots I_p\in\MVU$:
\begin{enumerate}[label=(\arabic*)]
\item \label{FI-notation:cup}
    $\cup\FI:=I_0\cup\dots\cup I_p\in\FVU$ is the union of elements in $\FI$;
\item \label{FI-notation:k}
    $\FI_{\hk}:=I_0\dots\hI_k\dots I_p:=I_0\dots I_{k-1}I_{k+1}\dots I_p$ refers to the removal of the $(k+1)$th element $I_k$ of the string;
\item \label{FI-notation:jk}
    $\FI_{j\mathrel{\leftrightarrow}k}$ $(j\neq k)$ refers to the switching of the positions of $I_j$ and $I_k$;
\item \label{FI-notation:fI}
    Each map $\tf:\FVU\to\FVV$ sends $\FI$ to $\tf\FI:=(\tf I_0)\dots(\tf I_p)\in\MVV$;
\item \label{FI-notation:MpVU}
    $\MpVU$ denotes the subset of $\MVU$ consisting of strings of length $p+1$.
\end{enumerate}
\end{rem}

For each $\FI\in\MVU$, it is natural to define
\begin{equation} \label{tU-FI-def}
(\tU_{\FI},G_{\FI},\pi_{\FI}):=(\tU_{\cup\FI},G_{\cup\FI},\pi_{\cup\FI})\in\CU,
\end{equation}
although one has to be careful that different strings in $\MVU$ may correspond to the same chart in $\CU$; in particular, if $\FI=I_0\dots I_p$, then
$$U_{\cup\FI}=U_{I_0}\cap\dots\cap U_{I_p}$$
is uniformized by $(\tU_{\FI},G_{\FI},\pi_{\FI})$ (see Definition \ref{good-atlas}\ref{good-atlas:finite-intersection}). With
$$\Om^q(\tU_{\FI})^{G_{\FI}}:=\{\om_{\FI}\in\Om^q(\tU_{\FI})\mid \om_{\FI}\ \text{is}\ G_{\FI}\text{-invariant}\}\subseteq\Om^q(\tU_{\FI})$$
denoting the set of $G_{\FI}$-invariant differential $q$-forms on $\tU_{\FI}$, we define
\begin{equation} \label{oCpq-def}
\oC^p(\CU,\Om^q):=\Big\{(\om_{\FI})\in\prod_{\FI\in\MpVU}\Om^q(\tU_{\FI})^{G_{\FI}}\mathrel{\Big|}\om_{\FI}=-\om_{\FI_{j\mathrel{\leftrightarrow}k}}\ \text{whenever}\ 0\leq j,k\leq p\Big\}
\end{equation}
for all $p,q\in\bbZ_{\geq 0}$, and $\oC^p(\CU,\Om^q)=\{0\}$ otherwise. Then
$$\oC^*(\CU,\Om^*)=\bigoplus_{p,q\in\bbZ}\oC^p(\CU,\Om^q)$$
becomes a double complex $(\oC^*(\CU,\Om^*),d,\de)$ with $d$ the exterior derivative on $\oC^*(\CU,\Om^*)$ and
\renewcommand\arraystretch{1.5}
\begin{equation} \label{delta-def}
\begin{array}{rccc}
\de: & \oC^p(\CU,\Om^q) & \to & \oC^{p+1}(\CU,\Om^q)\\
& \om & \mapsto & \de\om=((\de\om)_{\FI})\\
& & & (\de\om)_{\FI}=\dsum\limits_{k=0}^{p+1}(-1)^k\om_{\FI_{\hk}}\big|_{\tU_{\FI}}.
\end{array}
\end{equation}
Indeed, the squares
$$\bfig
\square/->`<-`<-`->/<1000,400>[\oC^p(\CU,\Om^{q+1})`\oC^{p+1}(\CU,\Om^{q+1})`\oC^p(\CU,\Om^q)`\oC^{p+1}(\CU,\Om^q);\de`d`d`\de]
\morphism(-600,400)<600,0>[\dots`\oC^p(\CU,\Om^{q+1});]
\morphism(-600,0)<600,0>[\dots`\oC^p(\CU,\Om^q);]
\morphism(1000,400)<600,0>[\oC^{p+1}(\CU,\Om^{q+1})`\dots;]
\morphism(1000,0)<600,0>[\oC^{p+1}(\CU,\Om^q)`\dots;]
\morphism(0,400)<0,350>[\oC^p(\CU,\Om^{q+1})`\vdots;]
\morphism(1000,400)<0,350>[\oC^{p+1}(\CU,\Om^{q+1})`\vdots;]
\morphism(0,-400)<0,400>[\vdots`\oC^p(\CU,\Om^q);]
\morphism(1000,-400)<0,400>[\vdots`\oC^{p+1}(\CU,\Om^q);]
\efig$$
are obviously commutative and, moreover, $d^2=0$ and $\de^2=0$.

\begin{rem} \label{restrict-indep-embed}
The restriction $\om_{\FI_{\hk}}\big|_{\tU_{\FI}}=\lam^*_{\FI,\FI_{\hk}}(\om_{\FI_{\hk}})$ given in \eqref{delta-def} is independent of the choice of the embedding
$$\lam_{\FI,\FI_{\hk}}:\tU_{\FI}\to\tU_{\FI_{\hk}}$$
since $\om_{\FI_{\hk}}$ is $G_{\FI_{\hk}}$-invariant (see Remark \ref{two-embedding-group}). For the same reason, in what follows we do not specify the embeddings while taking restrictions of $q$-forms in $\oC^p(\CU,\Om^q)$.
\end{rem}

Let $(\Tot(\oC^*(\CU,\Om^*)),\oD)$ denote the total complex of $\oC^*(\CU,\Om^*))$, with
$$\Tot(\oC^*(\CU,\Om^*))^k=\bigoplus_{p+q=k}\oC^p(\CU,\Om^q)$$
and
\begin{equation} \label{oD-def}
\oD=\de+(-1)^p d
\end{equation}
on $\oC^p(\CU,\Om^q)$. Obviously,
$$(\oOm^*(\CU),d)=\big(\ker\de|_{\oC^0(\CU,\Om^*)},d\big)\quad\text{and}\quad(\oC^*(\CU,\bbZ),\de)$$
are both subcomplexes of $(\Tot(\oC^*(\CU,\Om^*)),\oD)$. Therefore:

\begin{thm} \label{oSU-def}
For every good atlas $\CU$,
$$\oS_{\CU}=(\oF^*_{\CU},\oE^*_{\CU},\oI^*_{\CU}):=(\Tot(\oC^*(\CU,\Om^*)),\oOm^*(\CU),\oC^*(\CU,\bbZ))$$
is a spark complex.
\end{thm}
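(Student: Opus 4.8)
The plan is to verify the three defining axioms of a homological spark complex for the triple $(\Tot(\oC^*(\CU,\Om^*)),\oOm^*(\CU),\oC^*(\CU,\bbZ))$. The first axiom — that $\oOm^*(\CU)$ and $\oC^*(\CU,\bbZ)$ are subcomplexes of $\Tot(\oC^*(\CU,\Om^*))$ — has essentially already been dispatched in the paragraph preceding the statement, where $\oOm^*(\CU)$ is identified with $\ker\de|_{\oC^0(\CU,\Om^*)}$ (so $\oD$ restricts to $(-1)^0 d = d$ on it) and $\oC^*(\CU,\bbZ)$ is a subcomplex on which $\oD$ restricts to $\de$; I would only need to add that these inclusions are compatible with the grading conventions, i.e. that $\oOm^q(\CU)$ sits in bidegree $(0,q)$ and $\oC^p(\CU,\bbZ)$ in bidegree $(p,0)$, so both land in total degree matching their own degree. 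Also one should note $\oOm^*(\CU)$ and $\oC^*(\CU,\bbZ)$ are bounded below (concentrated in degrees $\geq 0$), as required of all cochain complexes in the paper.

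First I would check axiom (2): $\oF^k=\oE^k=\oI^k=\{0\}$ for $k<0$ is immediate from the convention $\oC^p(\CU,\Om^q)=\{0\}$ unless $p,q\geq 0$. For $\oI^k\cap\oE^k=\{0\}$ when $k>0$: an element of $\oE^k=\oOm^k(\CU)$ lives in bidegree $(0,k)$, while $\oC^k(\CU,\bbZ)=\oI^k$ lives in bidegree $(k,0)$; for $k>0$ these are distinct summands of the total complex, so their intersection inside $\Tot(\oC^*(\CU,\Om^*))^k$ is zero. (At $k=0$ both are the bidegree-$(0,0)$ piece $\oC^0(\CU,\Om^0)$ and the intersection is the locally constant integer-valued $0$-forms, which is why the axiom is only imposed for $k>0$.)

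The substantive step is axiom (3): $H^*(\oE^*)\cong H^*(\oF^*)$, i.e. the inclusion $\oOm^*(\CU)\hookrightarrow\Tot(\oC^*(\CU,\Om^*))$ is a quasi-isomorphism. The standard strategy is a double-complex spectral sequence / acyclic-resolution argument: filter $\oC^*(\CU,\Om^*)$ by the \v{C}ech degree $p$ and compute the $E_1$-page by first taking cohomology in the de Rham direction $d$. Here the key input is Definition \ref{good-atlas}\ref{good-atlas:contractible}: each non-empty $\tU_{\FI}=\tU_{\cup\FI}$ is contractible, and a finite group $G_{\FI}$ acts on it, so by the Poincar\'e lemma together with the fact that averaging over $G_{\FI}$ exhibits $G_{\FI}$-invariant de Rham cohomology as the ordinary de Rham cohomology (a finite group acting smoothly on a contractible manifold has $H^*_{dR}(\tU_{\FI})^{G_{\FI}}\cong H^*_{dR}(\tU_{\FI})\cong\bbR$ in degree $0$ and $0$ above), the rows of the double complex are exact except at $q=0$, where the $q=0$ row is precisely the \v{C}ech complex $\oC^*(\CU,\Om^0\text{-closed}) = (\oOm^*(\CU)$ in the appropriate sense$)$; more precisely one shows the $E_1$-page is concentrated in the column computing $H^*$ of $(\oOm^*(\CU),d)$, whence the total cohomology agrees with $H^*(\oOm^*(\CU))$ and the edge map is the inclusion. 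I would phrase this either via the spectral sequence of the filtered complex or, equivalently, via the classical lemma that if a double complex has exact rows (resp. columns) then the inclusion of the kernel-complex along the surviving edge is a quasi-isomorphism.

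The main obstacle I anticipate is making the ``$G_{\FI}$-invariant Poincar\'e lemma'' precise and checking that the surviving edge of the spectral sequence really is the subcomplex $\oOm^*(\CU)=\ker\de|_{\oC^0(\CU,\Om^*)}$ rather than just something abstractly isomorphic to it — one must track the antisymmetrization condition in \eqref{oCpq-def} and the restriction maps of Remark \ref{restrict-indep-embed} carefully, and confirm that taking $d$-cohomology of the $p$-th column followed by $\de$-cohomology reproduces the complex $\oOm^*(\CU)$ with its genuine differential $d$. Averaging a fixed primitive over the finite group to obtain a $G_{\FI}$-invariant primitive, and checking compatibility of these choices across the $\de$-maps, is the kind of bookkeeping that needs care but no new ideas. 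The boundedness-below hypothesis on all complexes guarantees the spectral sequence converges.
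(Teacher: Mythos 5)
Your treatment of axioms (1) and (2) is fine and matches the paper's (the intersection statement is the bidegree observation you give). The gap is in axiom (3): you collapse the double complex in the wrong direction. Filtering by the \v Cech degree $p$ and taking $d$-cohomology first, contractibility of the charts (plus the averaging argument for the finite groups $G_{\FI}$) gives an $E_1$-page concentrated in the row $q=0$, and that row is the antisymmetrized \v Cech complex of the cover $\{U_I\}$ with coefficients in the \emph{constant} functions $\bbR$ — not anything involving $\oOm^*(\CU)$. So this spectral sequence yields $H^*(\oF^*_{\CU},\oD)\cong\check{H}^*(\{U_I\};\bbR)$, and your assertion that ``the $E_1$-page is concentrated in the column computing $H^*(\oOm^*(\CU),d)$, whence the total cohomology agrees with $H^*(\oOm^*(\CU))$ and the edge map is the inclusion'' does not follow: $\oOm^*(\CU)=\ker\de|_{\oC^0(\CU,\Om^*)}$ is attached to the \emph{other} edge (the $p=0$ column), and the edge map of your filtration compares $H^*(\oF^*_{\CU})$ with \v Cech cohomology, not with $H^*(\oE^*_{\CU})$. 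To finish along your route you would additionally have to prove $H^*(\oOm^*(\CU),d)\cong\check{H}^*(\{U_I\};\bbR)$ compatibly with these maps, which is exactly the other half of the \v Cech--de Rham argument and is the missing (and main) content; it matters all the more if, as in Harvey--Lawson, the isomorphism in axiom (3) is meant to be induced by the inclusion $\oE^*_{\CU}\subseteq\oF^*_{\CU}$.

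The paper's proof goes the other way and needs no contractibility at all for this theorem: the augmented complexes
$0\to\oOm^q(\CU)\to\oC^0(\CU,\Om^q)\to^{\de}\oC^1(\CU,\Om^q)\to^{\de}\cdots$
are exact because the cover $\{U_i\}$ admits a (group-invariant) partition of unity on the orbifold charts \cite{Borzellino2008}, i.e.\ the generalized Mayer--Vietoris principle in the $\de$-direction; then the standard argument of \cite[below Proposition 8.8]{Bott1982} identifies $H^*(\oF^*_{\CU},\oD)$ with the cohomology of the initial column $(\oOm^*(\CU),d)$, with the isomorphism induced by the inclusion. So the key input you are missing is the invariant partition of unity giving $\de$-exactness; contractibility of the $\tU_I$ enters elsewhere in the paper (e.g.\ the Leray/good-cover comparison in the appendix), not in the proof of this theorem.
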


\begin{proof}
First, it is easy to see that $\oE^*_{\CU}\cap\oI^*_{\CU}=\{0\}$ for $k>0$. Second, $H^*(\oF^*_{\CU},\oD)\cong H^*(\oE^*_{\CU},d)$ since the rows of the double complex $\oC^*(\CU,\Om^*)$ are exact by partition of unity (see \cite[Proposition 49]{Borzellino2008}), which implies that $H^*(\oF^*_{\CU},\oD)$ is isomorphic to the cohomology of the initial column of the double complex $\oC^*(\CU,\Om^*)$ (see the argument below the proof of \cite[Proposition 8.8]{Bott1982}). The conclusion thus follows.
\end{proof}

\section{1-functoriality of spark complexes on good atlases} \label{Functor}

In this section, we show that the assignment $\CU\mapsto\oS_{\CU}$ defined in Theorem \ref{oSU-def} gives rise to a contravariant functor from the category $\GPreOrb$ of good atlases to the category $\SpCx$ of spark complexes. The following lemma is useful for later calculations:

\begin{lem} \label{lift-indep-embed}
Let $\tf:\CU\to\CV$ be a compatible system and $\om\in\oC^p(\CV,\Om^q)$. Then
$$(\tf^*_J(\om_{\tf J}))|_{\tU_I}=\tf^*_I(\om_{\tf J}|_{\tV_{\tf I}})$$
whenever $J\subseteq I\in\FVU$.
\end{lem}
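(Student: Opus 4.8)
The plan is to unwind both sides of the claimed identity $(\tf^*_J(\om_{\tf J}))|_{\tU_I}=\tf^*_I(\om_{\tf J}|_{\tV_{\tf I}})$ to pullbacks of the single differential form $\om_{\tf J}\in\Om^q(\tV_{\tf J})^{G_{\tf J}}$ along two \emph{a priori} different smooth maps $\tU_I\to\tV_{\tf J}$, and then to show those two maps agree up to an element of $H_{\tf J}$, so that the $G_{\tf J}$-invariance of $\om_{\tf J}$ (Remark \ref{restrict-indep-embed}, Remark \ref{two-embedding-group}) forces the pullbacks to coincide. First I would fix the relevant embeddings: since $J\subseteq I$ in $\FVU$ we have $U_I\subseteq U_J$ and an embedding $\lam_{IJ}:(\tU_I,G_I,\pi_I)\to(\tU_J,G_J,\pi_J)$ in $\CU$ (Definition \ref{good-atlas}\ref{good-atlas:embedding}); applying the functor $\tf$ gives an embedding $\tf\lam_{IJ}:(\tV_{\tf I},H_{\tf I},\tau_{\tf I})\to(\tV_{\tf J},H_{\tf J},\tau_{\tf J})$ in $\CV$, which realizes the restriction $(-)|_{\tV_{\tf I}}$ appearing on the right-hand side (Remark \ref{restrict-indep-embed}).

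Next I would compute the left-hand side: the restriction $(-)|_{\tU_I}$ on $\tU_J$-forms is pullback along $\lam_{IJ}$, so $(\tf^*_J(\om_{\tf J}))|_{\tU_I}=\lam_{IJ}^*(\tf^*_J(\om_{\tf J}))=(\tf_J\circ\lam_{IJ})^*(\om_{\tf J})$ by functoriality of pullback. For the right-hand side, $\om_{\tf J}|_{\tV_{\tf I}}=(\tf\lam_{IJ})^*(\om_{\tf J})$, so $\tf^*_I(\om_{\tf J}|_{\tV_{\tf I}})=\tf^*_I((\tf\lam_{IJ})^*(\om_{\tf J}))=((\tf\lam_{IJ})\circ\tf_I)^*(\om_{\tf J})$. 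Thus the identity reduces to the assertion that the two maps $\tf_J\circ\lam_{IJ}$ and $(\tf\lam_{IJ})\circ\tf_I$ from $\tU_I$ to $\tV_{\tf J}$ induce the same pullback on $q$-forms — and in fact these two maps are \emph{literally equal}: this is precisely Eq. \eqref{compat-sys:lift-embed} in the definition of a compatible system (Definition \ref{compat-sys}\ref{compat-sys:lift}), the commutativity of the cube. So the conclusion is immediate once both sides are rewritten.

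The only subtlety I anticipate — and it is mild — is bookkeeping about \emph{which} embeddings realize the various restrictions: a restriction such as $\om_{\tf J}|_{\tV_{\tf I}}$ is, strictly speaking, defined only up to the choice of an embedding $\tV_{\tf I}\to\tV_{\tf J}$ in $\CV$, and one must invoke the $H_{\tf J}$-invariance of $\om_{\tf J}$ (Remark \ref{restrict-indep-embed}) to know the result is well-defined and, in particular, agrees with the specific choice $\tf\lam_{IJ}$ used above. Once this is acknowledged, the computation is a two-line chain of functoriality of pullback plus one application of \eqref{compat-sys:lift-embed}; there is no genuine obstacle. I would therefore present the proof as: rewrite both restrictions as pullbacks along embeddings in $\CV$ respectively $\CU$, use contravariant functoriality of $(-)^*$ to collapse each side to a pullback of $\om_{\tf J}$ along a composite $\tU_I\to\tV_{\tf J}$, and conclude by \eqref{compat-sys:lift-embed} that the two composites coincide.
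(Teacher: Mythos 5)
Your proof is correct and follows essentially the same route as the paper's: both rewrite the two restrictions as pullbacks along the embeddings $\lam_{IJ}$ (guaranteed by $J\subseteq I$) and $\tf\lam_{IJ}$, invoke Remark \ref{restrict-indep-embed} for independence of these choices, and conclude via the compatibility identity \eqref{compat-sys:lift-embed} together with functoriality of pullback. No gaps; your extra remark on the well-definedness of the restrictions is exactly the point the paper delegates to Remark \ref{restrict-indep-embed}.
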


\begin{proof}
By Remark \ref{restrict-indep-embed},
$$\om_J|_{\tU_I}=\lam^*_{IJ}(\om_J)$$
for any embedding $\lam_{IJ}:(\tU_I,G_I,\pi_I)\to(\tU_J,G_J,\pi_J)$ of charts in $\CU$, whose existence is guaranteed by $J\subseteq I$. Hence
$$(\tf^*_J(\om_{\tf J}))|_{\tU_I}=\lam^*_{IJ}\tf^*_J(\om_{\tf J})=\tf^*_I(\tf\lam_{IJ})^*(\om_{\tf J})=\tf^*_I(\om_{\tf J}|_{\tV_{\tf I}}),$$
where the second equality follows from Eq. \eqref{compat-sys:lift-embed} in Definition \ref{compat-sys}, and the third equality holds since $\tf\lam_{IJ}:(\tV_{\tf I},H_{\tf I},\tau_{\tf I})\to(\tV_{\tf J},H_{\tf J},\tau_{\tf J})$ is an embedding of charts in $\CV$.
\end{proof}

\begin{prop} \label{compat-sys-induce-spark}
Each compatible system $\tf:\CU\to\CV$ induces a spark homomorphism
$$\utf:\oS_{\CV}\to\oS_{\CU}.$$
\end{prop}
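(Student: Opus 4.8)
The plan is to build the spark homomorphism $\utf\colon\oS_{\CV}\to\oS_{\CU}$ directly out of the pullbacks $\tf_I^*$ along the liftings $\tf_I$ supplied by the compatible system, assembled componentwise over the strings of $\MVU$. First I would define, for $\om=(\om_{\FK})\in\oC^p(\CV,\Om^q)$, the family $\utf(\om)=((\utf\om)_{\FI})$ indexed by $\FI\in\MpVU$ via
$$(\utf\om)_{\FI}:=\tf_{\cup\FI}^*\big(\om_{\tf\FI}\big|_{\tV_{\tf(\cup\FI)}}\big),$$
using the convention \eqref{tU-FI-def} that $(\tU_{\FI},G_{\FI},\pi_{\FI})=(\tU_{\cup\FI},G_{\cup\FI},\pi_{\cup\FI})$ and noting that $\tf$ preserves finite unions by \eqref{compat-sys:union}, so $\tf(\cup\FI)=\cup(\tf\FI)$. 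I must then check that $(\utf\om)_{\FI}$ lands in $\Om^q(\tU_{\FI})^{G_{\FI}}$ — this is where the functor $\tf\colon G_{\cup\FI}\to H_{\tf(\cup\FI)}$ of Remark~\ref{two-embedding-group} together with $G_{\FI}$-invariance of $\om_{\tf\FI}$ and the relation $\tf_I\circ g=(\tf g)\circ\tf_I$ (the $\CU=\CV$ instance of \eqref{compat-sys:lift-embed} applied to automorphisms) is used — and that the alternating-sign condition in \eqref{oCpq-def} is inherited. Extending $\bbR$-linearly and taking direct sums over $p+q=k$ gives a graded group homomorphism $\utf\colon\oF_{\CV}^*\to\oF_{\CU}^*$.

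Next I would verify that $\utf$ is a cochain map, i.e. commutes with $\oD=\de+(-1)^pd$. Commutation with the exterior derivative $d$ is just naturality of pullback of forms, $\tf_I^*\circ d=d\circ\tf_I^*$. Commutation with the Čech differential $\de$ of \eqref{delta-def} is the point where Lemma~\ref{lift-indep-embed} does the real work: for $\FI\in\MppVU$ and each $k$, the summand $\om_{\tf\FI_{\hk}}\big|_{\tV_{\tf\FI}}$ pulled back by $\tf_{\cup\FI}^*$ must be identified with the restriction to $\tU_{\FI}$ of $(\utf\om)_{\FI_{\hk}}=\tf_{\cup\FI_{\hk}}^*\big(\om_{\tf\FI_{\hk}}\big|_{\tV_{\tf\FI_{\hk}}}\big)$; since $\cup\FI_{\hk}\subseteq\cup\FI$, this is exactly the content of Lemma~\ref{lift-indep-embed} with $J=\cup\FI_{\hk}$, $I=\cup\FI$ and the form $\om_{\tf\FI_{\hk}}$. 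Summing the identities $\tf_{\cup\FI}^*\big(\om_{\tf\FI_{\hk}}\big|_{\tV_{\tf\FI}}\big)=(\utf\om)_{\FI_{\hk}}\big|_{\tU_{\FI}}$ against the signs $(-1)^k$ yields $\utf\circ\de=\de\circ\utf$.

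Finally I would check that $\utf$ respects the subcomplexes $\oE^*$ and $\oI^*$, which is what makes it a spark homomorphism rather than a mere cochain map. For $\oE_{\CV}^*=\oOm^*(\CV)=\ker\de|_{\oC^0(\CV,\Om^*)}$: a genuine $q$-form on $\CV$ is a compatible family over all charts, and the pullback along the liftings $\{\tf_I\}$ — which are compatible over embeddings by \eqref{compat-sys:lift-embed} — produces a compatible family over $\CU$, hence a $q$-form on $\CU$; so $\utf(\oOm^*(\CV))\subseteq\oOm^*(\CU)$. For $\oI_{\CV}^*=\oC^*(\CV,\bbZ)$: locally constant integer-valued cochains pull back to locally constant integer-valued cochains since the $\tf_I$ are continuous, so $\utf(\oC^*(\CV,\bbZ))\subseteq\oC^*(\CV,\bbZ)$ — more precisely into $\oC^*(\CU,\bbZ)$, because pullback of a constant function is the constant function with the same value. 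I expect the main obstacle to be bookkeeping: making the indexing over strings in the free monoid $\MVU$ interact cleanly with the passage to unions $\cup\FI\in\FVU$ and with the two roles of \eqref{compat-sys:lift-embed} (once for genuine embeddings $\lam_{IJ}$ to get well-definedness and the $\de$-compatibility via Lemma~\ref{lift-indep-embed}, once for group elements to get $G_{\FI}$-invariance), and to confirm that every restriction appearing is independent of the chosen embedding, which is guaranteed by Remark~\ref{restrict-indep-embed}. The DGA/multiplicative structure is not needed here, so the argument stays at the level of forms and their pullbacks.
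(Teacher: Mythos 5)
Your proposal is correct and follows essentially the same route as the paper: the same componentwise definition $(\utf\om)_{\FI}=\tf_{\FI}^*(\om_{\tf\FI})$ (your extra restriction $\big|_{\tV_{\tf(\cup\FI)}}$ is vacuous, since $\om_{\tf\FI}$ already lives on $\tV_{\tf\FI}=\tV_{\tf(\cup\FI)}$ because $\tf$ preserves unions), the same use of Lemma \ref{lift-indep-embed} with $J=\cup\FI_{\hk}\subseteq I=\cup\FI$ to commute with $\de$, and the same observation that $\oOm^*(\CV)$ and $\oC^*(\CV,\bbZ)$ are carried into $\oOm^*(\CU)$ and $\oC^*(\CU,\bbZ)$. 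Your extra verification that $(\utf\om)_{\FI}$ is $G_{\FI}$-invariant (more precisely: $H_{\tf\FI}$-invariance of $\om_{\tf\FI}$ transported through the homomorphism $G_{\FI}\to H_{\tf\FI}$ and \eqref{compat-sys:lift-embed} applied to group elements viewed as self-embeddings) is a detail the paper leaves implicit, and it is sound.
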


\begin{proof}
For every $\om\in\oC^p(\CV,\Om^q)$, define
\begin{equation} \label{utf-def}
\utf\om\in\oC^p(\CU,\Om^q)\quad\text{with}\quad(\utf\om)_{\FI}=\tf^*_{\FI}(\om_{\tf\FI});
\end{equation}
that is, for every $\FI=I_0\dots I_p\in\MVU$, $(\utf\om)_{\FI}$ is the pullback of $\om_{\tf\FI}=\om_{(\tf I_0)\dots(\tf I_p)}$ (see Remark \ref{FI-notation}\ref{FI-notation:fI}) along the lifting (cf. Eq. \eqref{compat-sys:union})
$$\tf_{\FI}:=\tf_{\cup\FI}=\tf_{I_0\cup\dots\cup I_p}:\tU_{\FI}\to\tV_{\tf\FI}=\tV_{(\tf I_0)\cup\dots\cup(\tf I_p)}=\tV_{\tf(\cup\FI)}.$$
We show that $\utf:\oS_{\CV}\to\oS_{\CU}$ is a spark homomorphism.

First, $\utf:\Tot(\oC^*(\CV,\Om^*))\to\Tot(\oC^*(\CU,\Om^*))$ is a cochain map. Since $\utf$ obviously commutes with $d$, it suffices to show that $\utf$ commutes with $\de$. Indeed, for any $\om\in\oC^p(\CV,\Om^q)$ and $\FI\in\MppVU$,
\begin{align*}
(\de\utf\om)_{\FI}&=\sum\limits_{k=0}^{p+1}(-1)^k(\utf\om)_{\FI_{\hk}}\big|_{\tU_{\FI}}&(\text{Eq.}\ \eqref{delta-def})\\
&=\sum\limits_{k=0}^{p+1}(-1)^k(\tf^*_{\FI_{\hk}}(\om_{\tf\FI_{\hk}}))\big|_{\tU_{\FI}}&(\text{Eq.}\ \eqref{utf-def})\\
&=\sum\limits_{k=0}^{p+1}(-1)^k\tf^*_{\FI}(\om_{\tf\FI_{\hk}}\big|_{\tV_{\tf\FI}})&(\text{Lemma \ref{lift-indep-embed}})\\
&=\tf^*_{\FI}\Big(\sum\limits_{k=0}^{p+1}(-1)^k\om_{(\tf\FI)_{\hk}}\big|_{\tV_{\tf\FI}}\Big)\\
&=\tf^*_{\FI}(\de\om)_{\tf\FI}&(\text{Eq.}\ \eqref{delta-def})\\
&=(\utf\de\om)_{\FI}.&(\text{Eq.}\ \eqref{utf-def})
\end{align*}

Second, $\utf$ clearly maps $\oOm^*(\CV)$ and $\oC^*(\CV,\bbZ)$ into $\oOm^*(\CU)$ and $\oC^*(\CU,\bbZ)$, respectively. Hence $\utf:\oS_{\CV}\to\oS_{\CU}$ is a spark homomorphism.
\end{proof}

\begin{prop} \label{compat-sys-induce-spark-comp}
For compatible systems $\tf:\CU\to\CV$ and $\tg:\CV\to\CW$,
$$\underline{\tg\circ\tf}=\utf\circ\utg:\oS_{\CW}\to\oS_{\CU}.$$
\end{prop}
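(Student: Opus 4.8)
The plan is to verify the identity $\underline{\tg\circ\tf}=\utf\circ\utg$ by a direct componentwise computation on an arbitrary element $\om\in\oC^p(\CW,\Om^q)$, since both sides are spark homomorphisms $\oS_{\CW}\to\oS_{\CU}$ and two spark homomorphisms agree as soon as their underlying cochain maps agree, which in turn is a pointwise matter on the generating forms $\om_{\FI}$ indexed by strings $\FI\in\MpVU$. So it suffices to show, for every $\FI = I_0\dots I_p\in\MpVU$, that
$$(\underline{\tg\circ\tf}\,\om)_{\FI} = ((\utf\circ\utg)\om)_{\FI}.$$

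First I would unwind the left-hand side using the definition \eqref{utf-def} of the induced map: $(\underline{\tg\circ\tf}\,\om)_{\FI} = (\tg\circ\tf)^*_{\FI}(\om_{(\tg\circ\tf)\FI})$, where, following the notation just before the statement, $(\tg\circ\tf)_{\FI} = (\tg\circ\tf)_{\cup\FI}$ is the lifting of $(g\circ f)|_{U_{\cup\FI}}$ attached to the composite compatible system, and $(\tg\circ\tf)\FI = (\tg\tf I_0)\dots(\tg\tf I_p)$. By the definition \eqref{composite-lift} of the composite compatible system in the proof of Proposition \ref{GPreOrb-Cat}, the lifting indexed by $\cup\FI$ is $(\tg\circ\tf)_{\cup\FI} = \tg_{\tf(\cup\FI)}\circ\tf_{\cup\FI}$. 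Here I would want to note that $\tf$ preserves finite unions (Eq. \eqref{compat-sys:union}), so $\tf(\cup\FI) = \cup(\tf\FI)$ and hence $\tg_{\tf(\cup\FI)} = \tg_{\tf\FI}$ in the string notation; likewise $(\tg\circ\tf)\FI = \tg(\tf\FI)$. Therefore the left-hand side equals $(\tg_{\tf\FI}\circ\tf_{\FI})^*(\om_{\tg(\tf\FI)}) = \tf^*_{\FI}\bigl(\tg^*_{\tf\FI}(\om_{\tg(\tf\FI)})\bigr)$ by contravariance of pullback.

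Next I would unwind the right-hand side: $((\utf\circ\utg)\om)_{\FI} = (\utf(\utg\om))_{\FI} = \tf^*_{\FI}\bigl((\utg\om)_{\tf\FI}\bigr)$, and $(\utg\om)_{\tf\FI} = \tg^*_{\tf\FI}(\om_{\tg(\tf\FI)})$ again by \eqref{utf-def} applied to $\tg$ and the string $\tf\FI\in\MpVV$. Comparing, both sides equal $\tf^*_{\FI}\bigl(\tg^*_{\tf\FI}(\om_{\tg(\tf\FI)})\bigr)$, so the two spark homomorphisms coincide. Since both $\underline{\tg\circ\tf}$ and $\utf\circ\utg$ send $\oOm^*(\CW)$ into $\oOm^*(\CU)$ and $\oC^*(\CW,\bbZ)$ into $\oC^*(\CU,\bbZ)$ (Proposition \ref{compat-sys-induce-spark}), the equality of the underlying cochain maps upgrades to equality of spark homomorphisms.

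I do not expect any serious obstacle here; this is essentially a bookkeeping verification. The one point requiring a little care is the bookkeeping around strings versus index sets: the liftings are indexed by \emph{unions} $\cup\FI\in\FVU$ rather than by strings, so one must invoke the union-preservation property \eqref{compat-sys:union} to identify $\tg_{\tf(\cup\FI)}$ with the lifting attached to the string $\tf\FI$, and one must make sure the pullback along the composite lifting \eqref{composite-lift} is genuinely the composite of the pullbacks (functoriality of the pullback of differential forms, valid since the liftings are honest smooth maps). Beyond that, closing the argument is immediate.
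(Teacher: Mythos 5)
Your proposal is correct and follows essentially the same route as the paper: unwind both sides via Eq. \eqref{utf-def} and the composite liftings \eqref{composite-lift}, and observe that both reduce to $\tf^*_{\FI}\circ\tg^*_{\tf\FI}(\om_{\tg\tf\FI})$ by functoriality of pullback. Your extra care about strings versus unions (via \eqref{compat-sys:union}) is exactly the bookkeeping the paper leaves implicit in the notation $\tf_{\FI}:=\tf_{\cup\FI}$.
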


\begin{proof}
From Proposition \ref{GPreOrb-Cat} we see that the compatible system $\tg\circ\tf:\CU\to\CW$ is given by the composite functor $\tg\circ\tf:\CU\to\CW$ and the family $\{(\tg\circ\tf)_I=\tg_{\tf I}\circ\tf_I\mid I\in\FVU\}$ of liftings. Hence, for any $\om\in\oC^p(\CW,\Om^q)$ and $\FI\in\MpVU$,
\begin{align*}
(\underline{\tg\circ\tf}\om)_{\FI}&=(\tg\circ\tf)^*_{\FI}(\om_{\tg\tf\FI})&(\text{Eq.}\ \eqref{utf-def})\\
&=(\tg_{\tf\FI}\circ\tf_{\FI})^*(\om_{\tg\tf\FI})&(\text{Eq.}\ \eqref{composite-lift})\\
&=\tf^*_{\FI}\circ\tg^*_{\tf\FI}(\om_{\tg\tf\FI})\\
&=\tf^*_{\FI}(\utg\om)_{\tf\FI}&(\text{Eq.}\ \eqref{utf-def})\\
&=(\utf\circ\utg\om)_{\FI},&(\text{Eq.}\ \eqref{utf-def})
\end{align*}
which completes the proof.
\end{proof}

As an immediate consequence of Propositions \ref{compat-sys-induce-spark} and \ref{compat-sys-induce-spark-comp}, we obtain a functor
\begin{equation} \label{oS}
\oS:\GPreOrb^{\op}\to\SpCx
\end{equation}
at the level of 1-categories that sends each compatible system $\tf:\CU\to\CV$ to the spark homomorphism $\utf:\oS_{\CV}\to\oS_{\CU}$.

\section{2-functoriality of spark complexes on good atlases} \label{2-functor}

The functor $\oS$ obtained in the above section is in fact 2-functorial, whose mapping on 2-cells is given below:

\begin{prop} \label{nat-induce-homotopy}
Each natural transformation $\al:\tf^1\to/=>/\tf^2:\CU\to\CV$ of compatible systems induces a spark homotopy
$$\ual:\utf^1\to/=>/\utf^2:\oS_{\CV}\to\oS_{\CU}.$$
\end{prop}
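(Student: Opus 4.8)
The plan is to produce the spark homotopy $\ual$ as the "simplicial homotopy" operator that comes from the natural transformation $\al$, acting along the $\de$-direction of the double complex $\oC^*(\CU,\Om^*)$. Recall that a natural transformation $\al:\tf^1\to/=>/\tf^2$ is given by a family of embeddings $\al_I:\tV_{\tf^1 I}\to\tV_{\tf^2 I}$ in $\CV$ satisfying $\tf^2_I=\al_I\circ\tf^1_I$ and the compatibility square in Definition \ref{compat-sys-nat}. For $\om\in\oC^p(\CV,\Om^q)$ and a string $\FI=I_0\dots I_{p-1}\in\MpmVU$, I would set
\begin{equation} \label{ual-formula}
(\ual\om)_{\FI}=\sum_{k=0}^{p-1}(-1)^k\,(\tf^1_{\FI})^*\big(\al_{\cup\FI}^*(\om_{(\tf^1 I_0)\dots(\tf^1 I_k)(\tf^2 I_k)\dots(\tf^2 I_{p-1})})\big)\big|_{\tU_{\FI}},
\end{equation}
i.e. the usual prism/Eilenberg--Zilber style alternating sum interleaving $\tf^1$ and $\tf^2$ at the "splitting point", pulled back to $\tU_{\FI}$; this lands in $\oC^{p-1}(\CU,\Om^q)$, hence $\ual:\oF^*_{\CV}\to\oF^{*-1}_{\CU}$ lowers total degree by one as required. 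I would first check $\ual$ is well-defined (lands in the alternating, $G$-invariant subgroup), using Remark \ref{restrict-indep-embed} and Lemma \ref{lift-indep-embed} so that the restrictions do not depend on chosen embeddings, and the effectiveness argument of Remark \ref{two-embedding-group}.

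Next I would verify the cochain-homotopy identity $\oD\circ\ual+\ual\circ\oD=\utf^2-\utf^1$. Since $\ual$ only moves things in the $\de$-direction and commutes with the exterior derivative $d$ (pullback along smooth maps always does, and $d$ acts componentwise), the $d$-part of $\oD$ contributes nothing new and the identity reduces to the purely simplicial statement $\de\circ\ual+\ual\circ\de=\utf^2-\utf^1$ on each $\oC^p(\CV,\Om^q)$. This is the standard computation that the interleaving operator is a chain homotopy between the two "face-like" maps induced by $\tf^1$ and $\tf^2$: expanding $\de(\ual\om)_{\FI}$ and $(\ual\de\om)_{\FI}$ via \eqref{delta-def} and \eqref{ual-formula}, all interior terms cancel in pairs by the sign bookkeeping (exactly as in the proof that singular prisms give a chain homotopy), leaving the two extreme terms, which are $(\tf^2_{\FI})^*(\om_{\tf^2\FI})\big|_{\tU_{\FI}}$ and $-(\tf^1_{\FI})^*(\om_{\tf^1\FI})\big|_{\tU_{\FI}}$ after using $\tf^2_I=\al_I\circ\tf^1_I$ and the compatibility square of $\al$; these are precisely $(\utf^2\om)_{\FI}-(\utf^1\om)_{\FI}$ by \eqref{utf-def}.

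Finally I would check the two extra conditions in Definition \ref{spark-homo} that upgrade a cochain homotopy to a spark homotopy: that $\ual$ vanishes on $\oE^*_{\CV}=\oOm^*(\CV)$ and that $\ual(\oI^*_{\CV})\subseteq\oI^{*-1}_{\CU}$, where $\oI^*_{\CV}=\oC^*(\CV,\bbZ)$. The first is immediate because $\oOm^*(\CV)$ sits in bidegree $(0,*)$, i.e. in $\oC^0(\CV,\Om^*)$, and \eqref{ual-formula} is an empty sum for $p=0$. The second is clear since pullback of a locally constant ($\bbZ$-valued) form along $\tf^1_{\FI}\circ(\text{embeddings})$ is again locally constant and integer-valued, so $\ual$ preserves the integral subcomplex. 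I expect the only genuinely delicate point to be the sign/index bookkeeping in the homotopy identity of the previous paragraph — making sure the interleaving sum \eqref{ual-formula} is normalized so that the telescoping leaves exactly $\utf^2-\utf^1$ with the correct sign dictated by \eqref{oD-def}, and that the cross-terms coming from the $\al$-compatibility square match up with the $\de$ of the pulled-back form; everything else is routine once well-definedness is in place.
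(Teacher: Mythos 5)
Your proposal is correct and follows essentially the same route as the paper's proof: the same interleaving (prism) operator $(\ual\om)_{\FI}=\sum_j(-1)^j\tf^{1,*}_{\FI}\big(\om_{\FI^{1,2}_j}\big|_{\tV_{\FI^1}}\big)$, the same reduction of $\oD\circ\ual+\ual\circ\oD=\utf^2-\utf^1$ to the purely simplicial identity (the $d$-terms cancel because $\ual$ drops the \v{C}ech degree by one), the same telescoping of the extreme terms via $\tf^2_{\FI}=\al_{\FI}\circ\tf^1_{\FI}$, and the same checks that $\ual$ vanishes on $\oE^*_{\CV}\subseteq\oC^0(\CV,\Om^*)$ and preserves the integral subcomplex. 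One cosmetic remark: in your formula the $k$-th interleaved component lives on $\tV_{\FI^{1,2}_k}$, not on $\tV_{\tf^2(\cup\FI)}$, so the ``$\al_{\cup\FI}^*$'' there should be read as restriction along an embedding $\tV_{\tf^1(\cup\FI)}\to\tV_{\FI^{1,2}_k}$ (independent of the choice by Remark \ref{restrict-indep-embed}), exactly as in \eqref{ual-def}; $\al$ itself is only needed to identify the boundary term $\om_{\FI^2}\big|_{\tV_{\FI^1}}=\al_{\FI}^*(\om_{\FI^2})$.
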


\begin{proof}
For brevity of notations, we write
$$I^m:=\tf^m I\quad\text{and}\quad\FI^m:=I^m_0\dots I^m_p=\tf^m\FI=(\tf^m I_0)\dots(\tf^m I_p)$$
for all $I\in\FVU$, $\FI=I_0\dots I_p\in\MVU$ and $m=1,2$. We also refer to charts $(\tU_I,G_I,\pi_I)\in\CU$ and $(\tV_K,H_K,\tau_K)\in\CV$ just as $\tU_I$ and $\tV_K$, respectively.

By Definition \ref{compat-sys-nat}, there is a family $\{\al_{\FI}:\tV_{\FI^1}\to\tV_{\FI^2}\mid\FI\in\MVU\}$ of embeddings in $\CV$ satisfying
\begin{equation} \label{al-FI}
\tf^2_{\FI}=\al_{\FI}\circ\tf^1_{\FI}
\end{equation}
for all $\FI\in\MVU$. In particular, for any $\FI=I_0\dots I_p\in\MpVU$, one has $V_{\FI^1}\subseteq V_{\FI^2}$ and, consequently, by Definition \ref{good-atlas}\ref{good-atlas:finite-intersection},
\begin{equation} \label{FI1-12j-FI2}
V_{\FI^1}=V_{I^1_0\dots I^1_p}=V_{I^1_0\dots I^1_j I^1_j\dots I^1_p}\subseteq V_{I^1_0\dots I^1_j I^2_j\dots I^2_p}\subseteq V_{I^2_0\dots I^2_j I^2_j\dots I^2_p}=V_{I^2_0\dots I^2_p}=V_{\FI^2}
\end{equation}
whenever $0\leq j\leq p$. Let us denote
$$\FI^{1,2}_j=I^1_0\dots I^1_j I^2_j\dots I^2_p\in\MppVU,$$
then \eqref{FI1-12j-FI2} guarantees the existence of embeddings
\begin{equation} \label{tV-FI12-embed}
\tV_{\FI^1}\to\tV_{\FI^{1,2}_j}\to\tV_{\FI^2}
\end{equation}
in $\CV$ by Definition \ref{good-atlas}\ref{good-atlas:embedding}. Now, for every $\om\in\oC^p(\CV,\Om^q)$, define
\begin{equation} \label{ual-def}
\ual\om\in\oC^{p-1}(\CU,\Om^q)\quad\text{with}\quad(\ual\om)_{\FI}=\sum\limits_{j=0}^{p-1}(-1)^j\tf^{1,*}_{\FI}\big(\om_{\FI^{1,2}_j}\big|_{\tV_{\FI^1}}\big).
\end{equation}
We show that $\ual$ defines a spark homotopy $\utf^1\to/=>/\utf^2$.

First, $\oD\circ\ual+\ual\circ\oD=\utf^2-\utf^1$. For any $\om\in\oC^p(\CV,\Om^q)$ and $\FI=I_0\dots I_p\in\MpVU$, denote
\begin{align*}
&\FI^{1,2}_{j,\hk}:=I^1_0\dots I^1_j I^2_j\dots\hI^2_k\dots I^2_p\in\MpVU\ \text{if}\ 0\leq j\leq k\leq p,\ \text{and}\\
&\FI^{1,2}_{\hk,j}:=I^1_0\dots\hI^1_k\dots I^1_j I^2_j\dots I^2_p\in\MpVU\ \text{if}\ 0\leq k\leq j\leq p.
\end{align*}
Then
\begin{align*}
(\de\ual\om)_{\FI}&=\sum\limits_{k=0}^p(-1)^k(\ual\om)_{\FI_{\hk}}\big|_{\tU_{\FI}}&(\text{Eq.}\ \eqref{delta-def})\\
&=\sum\limits_{k=0}^p(-1)^k\Big[\sum\limits_{j=0}^{p-1}(-1)^j\tf^{1,*}_{\FI_{\hk}}\big(\om_{(\FI_{\hk})^{1,2}_j}\big|_{\tV_{\FI^1_{\hk}}}\big)\Big]\Big|_{\tU_{\FI}}&(\text{Eq.}\ \eqref{ual-def})\\
&=\sum\limits_{k=0}^p(-1)^k\tf^{1,*}_{\FI}\Big[\sum\limits_{j=0}^{p-1}(-1)^j\om_{(\FI_{\hk})^{1,2}_j}\big|_{\tV_{\FI^1_{\hk}}}\big|_{\tV_{\FI^1}}\Big]&(\text{Lemma \ref{lift-indep-embed}})\\
&=\sum\limits_{k=0}^p(-1)^k\tf^{1,*}_{\FI}\Big[\sum\limits_{j=0}^{p-1}(-1)^j\om_{(\FI_{\hk})^{1,2}_j}\big|_{\tV_{\FI^1}}\Big]&(\text{Remark \ref{restrict-indep-embed}})\\
&=\sum\limits_{k=0}^p(-1)^k\tf^{1,*}_{\FI}\Big[\sum\limits_{j=0}^{k-1}(-1)^j\om_{\FI^{1,2}_{j,\hk}}\big|_{\tV_{\FI^1}}+
\sum\limits_{j=k+1}^p(-1)^{j+1}\om_{\FI^{1,2}_{\hk,j}}\big|_{\tV_{\FI^1}}\Big]
\end{align*}
and
\begin{align*}
(\ual\de\om)_{\FI}&=\sum\limits_{j=0}^p(-1)^j\tf^{1,*}_{\FI}\big((\de\om)_{\FI^{1,2}_j}\big|_{\tV_{\FI^1}}\big)&(\text{Eq.}\ \eqref{ual-def})\\
&=\sum\limits_{j=0}^p(-1)^j\tf^{1,*}_{\FI}\Big[\sum\limits_{k=0}^{p+1}(-1)^k\om_{({\FI^{1,2}_j})_{\hk}}\big|_{\tV_{\FI^{1,2}_j}}\big|_{\tV_{\FI^1}}\Big]&(\text{Eq.}\ \eqref{delta-def})\\
&=\sum\limits_{j=0}^p(-1)^j\tf^{1,*}_{\FI}\Big[\sum\limits_{k=0}^{p+1}(-1)^k\om_{({\FI^{1,2}_j})_{\hk}}\big|_{\tV_{\FI^1}}\Big]&(\text{Remark \ref{restrict-indep-embed}})\\
&=\sum\limits_{j=0}^p(-1)^j\tf^{1,*}_{\FI}\Big[\sum\limits_{k=0}^j(-1)^k\om_{\FI^{1,2}_{\hk,j}}\big|_{\tV_{\FI^1}}+
\sum\limits_{k=j}^p(-1)^{k+1}\om_{\FI^{1,2}_{j,\hk}}\big|_{\tV_{\FI^1}}\Big]\\
&=\sum\limits_{k=0}^p(-1)^k\tf^{1,*}_{\FI}\Big[\sum\limits_{j=0}^k(-1)^{j+1}\om_{\FI^{1,2}_{j,\hk}}\big|_{\tV_{\FI^1}}+
\sum\limits_{j=k}^p(-1)^j\om_{\FI^{1,2}_{\hk,j}}\big|_{\tV_{\FI^1}}\Big],
\end{align*}
where Remark \ref{restrict-indep-embed} is applicable to the computations since one has the embeddings \eqref{tV-FI12-embed}. Since obviously $d\ual\om=\ual d\om$, it follows from \eqref{oD-def} that
\begin{align*}
(\oD\ual\om)_{\FI}+(\ual\oD\om)_{\FI}&=(\de\ual\om)_{\FI}+(\ual\de\om)_{\FI}\\
&=\sum\limits_{k=0}^p\tf^{1,*}_{\FI}\big(\om_{\FI^{1,2}_{\hk,k}}\big|_{\tV_{\FI^1}}-\om_{\FI^{1,2}_{k,\hk}}\big|_{\tV_{\FI^1}}\big)\\
&=\tf^{1,*}_{\FI}(\om_{\FI^2}\big|_{\tV_{\FI^1}}-\om_{\FI^1})\\
&=\tf^{1,*}_{\FI}\al^*_{\FI}(\om_{\FI^2})-\tf^{1,*}_{\FI}(\om_{\FI^1})&(\text{Remark \ref{restrict-indep-embed}})\\
&=\tf^{2,*}_{\FI}(\om_{\FI^2})-\tf^{1,*}_{\FI}(\om_{\FI^1})&(\text{Eq.}\ \eqref{al-FI})\\
&=(\utf^2\om)_{\FI}-(\utf^1\om)_{\FI}.&(\text{Eq.}\ \eqref{utf-def})
\end{align*}

Second, it is clear that $\ual=0$ on $\oE^*_{\CV}\subseteq\oC^0(\CV,\Om^*)$ and $\ual$ maps $\oI^*_{\CV}$ into $\oI^*_{\CU}$. Hence, $\ual:\utf^1\to/=>/\utf^2$ is a spark homotopy.
\end{proof}

Moreover, $\oS$ is compatible with vertical and horizontal compositions of 2-cells:

\begin{prop} \label{nat-induce-homotopy-vcomp}
Let $\tf^1\to/=>/^{\al}\tf^2\to/=>/^{\be}\tf^3:\CU\to\CV$ be natural transformations of compatible systems. Then there exists a homotopy
$$\Ga:\ube+\ual\toRr\ubeal$$
of spark homotopies.
\end{prop}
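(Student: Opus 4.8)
The plan is to produce the homotopy $\Ga$ explicitly by the same "prism" device that produced $\ual$ from $\al$ in Proposition \ref{nat-induce-homotopy}, but now iterated once more to interpolate between the three compatible systems $\tf^1,\tf^2,\tf^3$. Recall that $\ual$, $\ube$, $\ubeal$ are all graded maps lowering total degree by one, defined chartwise on $\oC^p(\CV,\Om^q)$ via pullbacks of the restrictions $\om_{\FI^{1,2}_j}|_{\tV_{\FI^1}}$ (resp. $\om_{\FI^{2,3}_j}|_{\tV_{\FI^2}}$, resp. $\om_{\FI^{1,3}_j}|_{\tV_{\FI^1}}$). The map $\Ga$ we seek lowers total degree by two, so chartwise on $\oC^p(\CV,\Om^q)$ it must output an element of $\oC^{p-2}(\CU,\Om^q)$. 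The natural candidate is a double-index sum
$$(\ubecal\cdot\Ga\om)_{\FI}=\sum_{0\le i\le j\le p-2}(-1)^{i+j}\,\tf^{1,*}_{\FI}\bigl(\om_{\FI^{1,2,3}_{i,j}}\big|_{\tV_{\FI^1}}\bigr),$$
where $\FI^{1,2,3}_{i,j}:=I^1_0\dots I^1_i\,I^2_i\dots I^2_j\,I^3_j\dots I^3_p$ is the obvious three-block interpolation string, and the embeddings $\tV_{\FI^1}\to\tV_{\FI^{1,2,3}_{i,j}}$ exist by the same inclusion-of-images argument as in \eqref{FI1-12j-FI2}, using Definition \ref{good-atlas}\ref{good-atlas:finite-intersection} and \ref{good-atlas:embedding}, together with \eqref{compat-sys-nat:lifting} applied to $\al$ and $\be$.

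First I would set up this notation carefully, record the (now three-layer) chain of embeddings in $\CV$ analogous to \eqref{tV-FI12-embed}, and note that Remark \ref{restrict-indep-embed} makes all restrictions in the formula well-defined. Then I would compute $\oD\circ\Ga-\Ga\circ\oD$ chartwise. Since $d$ and $\Ga$ visibly commute, by \eqref{oD-def} this reduces to computing $(\de\Ga\om)_{\FI}+(\Ga\de\om)_{\FI}$ for $\FI\in\MpVU$. Expanding $\de\Ga$ via \eqref{delta-def} and Lemma \ref{lift-indep-embed} (to pull the restriction $|_{\tU_{\FI}}$ through $\tf^{1,*}$), and likewise expanding $\Ga\de$, one gets four triple sums over the indices $i\le j$ and a deletion index $\hk$, split according to the relative position of $k$ among the three blocks. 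The bulk of the work is a bookkeeping argument: the telescoping of the inner deletions collapses almost everything, leaving precisely the boundary terms where a whole block degenerates — i.e. the terms reproducing $(\ube\om)_{\FI}$ (block $1$ empty: $i$-sum telescopes to the $\FI^{2,3}_j$ interpolation), $(\ual\om)_{\FI}$ (block $3$ empty), and $-(\ubeal\om)_{\FI}$ (block $2$ empty: the middle interpolation collapses and $\be\circ\al$ appears via \eqref{al-FI} applied twice). Matching signs then yields $\oD\circ\Ga-\Ga\circ\oD=\ube+\ual-\ubeal$, which is exactly the defining relation $\od\circ\Ga-\Ga\circ d=\Psi-\Phi$ of Definition \ref{spark-homo} with $\Phi=\ube+\ual$ and $\Psi=\ubeal$.

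Finally I would check the two restriction conditions: $\Ga$ vanishes on $\oE^*_{\CV}=\oOm^*(\CV)\subseteq\oC^0(\CV,\Om^*)$ because the defining sum is empty when $p=0$ (indeed whenever $p<2$), and $\Ga$ carries $\oI^*_{\CV}=\oC^*(\CV,\bbZ)$ into $\oI^*_{\CU}=\oC^*(\CU,\bbZ)$ since pullbacks and restrictions of locally constant integer-valued cochains stay locally constant and integer-valued; hence $\Ga$ is a genuine homotopy of spark homotopies. I expect the main obstacle to be purely combinatorial: getting the signs and the boundary-index cancellations exactly right in the four triple sums, in particular verifying that the "diagonal" terms $i=j$ and $\hk$ adjacent to a block interface cancel in pairs and that the three surviving boundary contributions carry the signs $+\ube$, $+\ual$, $-\ubeal$ rather than some permutation. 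This is the standard simplicial prism identity one rank higher, so it is routine in principle but delicate to write out; once the chartwise identity is established, the rest is immediate from the definitions.
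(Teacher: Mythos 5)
Your proposal is correct and follows essentially the same route as the paper: the identical interpolating strings $\FI^{1,2,3}_{i,j}$ and formula $(\Ga\om)_{\FI}=\sum_{0\leq i\leq j\leq p-2}(-1)^{i+j}\tf^{1,*}_{\FI}\big(\om_{\FI^{1,2,3}_{i,j}}\big|_{\tV_{\FI^1}}\big)$, the same reduction via $d\Ga=\Ga d$ to a $\de$-telescoping computation using Lemma \ref{lift-indep-embed}, Remark \ref{restrict-indep-embed} and \eqref{al-FI}, and the same verification that $\Ga$ vanishes on $\oE^*_{\CV}$ and maps $\oI^*_{\CV}$ into $\oI^*_{\CU}$. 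One caveat to fix when writing it out: with this definition the surviving boundary terms are $\ubeal-(\ube+\ual)$, not $\ube+\ual-\ubeal$ as you state, and the latter would be $\Phi-\Psi$ rather than the required $\Psi-\Phi$; these two sign slips cancel, so your conclusion stands, but the bookkeeping (and the stray ``$\ubecal$'' prefix in your defining display, which should just read $(\Ga\om)_{\FI}$) needs correcting.
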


\begin{proof}
Following the notation scheme in the proof of Proposition \ref{nat-induce-homotopy}, for $\FI=I_0\dots I_p\in\MpVU$ we further denote
$$\FI^{1,2,3}_{i,j}=I^1_0\dots I^1_i I^2_i\dots I^2_j I^3_j\dots I^3_p\in\MpppVU$$
and $\FI^{1,2,3}_{i,j,\hk}$, $\FI^{1,2,3}_{i,\hk,j}$, $\FI^{1,2,3}_{\hk,i,j}$ for their obvious meanings. Similar to \eqref{tV-FI12-embed} one has the embeddings
$$\tV_{\FI^1}\to\tV_{\FI^{1,2}_j}\to\tV_{\FI^2}\to\tV_{\FI^{2,3}_j}\to\tV_{\FI^3}\quad\text{and}\quad\tV_{\FI^1}\to\tV_{\FI^{1,2,3}_{i,j}}\to\tV_{\FI^3}$$
in $\CV$. For every $\om\in\oC^p(\CV,\Om^q)$, define
\begin{equation} \label{Ga-def}
\Ga\om\in\oC^{p-2}(\CU,\Om^q)\quad\text{with}\quad(\Ga\om)_{\FI}=\sum\limits_{0\leq i\leq j\leq p-2}(-1)^{i+j}\tf^{1,*}_{\FI}\big(\om_{\FI^{1,2,3}_{i,j}}\big|_{\tV_{\FI^1}}\big).
\end{equation}
We show that $\Ga$ defines a homotopy $\ube+\ual\toRr\ubeal$ of spark homotopies.

First, $\oD\circ\Ga-\Ga\circ\oD=\ubeal-(\ube+\ual)$. Since obviously $d\Ga=\Ga d$, by \eqref{oD-def} it suffices to prove
$$\de\circ\Ga-\Ga\circ\de=\ubeal-(\ube+\ual).$$
For any $\om\in\oC^p(\CV,\Om^q)$ and $\FI\in\MpmVU$, similar to the computations for $\de\ual$ and $\ual\de$ in the proof of Proposition \ref{nat-induce-homotopy} one has
\begin{align*}
(\de\Ga\om)_{\FI}={}&\sum\limits_{k=0}^{p-1}(-1)^k\tf^{1,*}_{\FI}\Big[\sum\limits_{0\leq i\leq j\leq p-2}(-1)^{i+j}\om_{(\FI_{\hk})^{1,2,3}_{i,j}}\big|_{\tV_{\FI^1}}\Big]\\
={}&\sum\limits_{0\leq i\leq j<k\leq p-1}(-1)^{i+j+k}\tf^{1,*}_{\FI}(\om_{\FI^{1,2,3}_{i,j,\hk}}\big|_{\tV_{\FI^1}})+\sum\limits_{0\leq i<k<j\leq p-1}(-1)^{i+j+k+1}\tf^{1,*}_{\FI}(\om_{\FI^{1,2,3}_{i,\hk,j}}\big|_{\tV_{\FI^1}})\\
{}&+\sum\limits_{0\leq k<i\leq j\leq p-1}(-1)^{i+j+k}\tf^{1,*}_{\FI}(\om_{\FI^{1,2,3}_{\hk,i,j}}\big|_{\tV_{\FI^1}})
\end{align*}
and
\begin{align*}
(\Ga\de\om)_{\FI}={}&\sum\limits_{0\leq i\leq j\leq p-1}(-1)^{i+j}\tf^{1,*}_{\FI}\Big[\sum\limits_{k=0}^{p+1}(-1)^k\om_{(\FI^{1,2,3}_{i,j})_{\hk}}\big|_{\tV_{\FI^1}}\Big]\\
={}&\sum\limits_{0\leq k\leq i\leq j\leq p-1}(-1)^{i+j+k}\tf^{1,*}_{\FI}(\om_{\FI^{1,2,3}_{\hk,i,j}}\big|_{\tV_{\FI^1}})+\sum\limits_{0\leq i\leq k\leq j\leq p-1}(-1)^{i+j+k+1}\tf^{1,*}_{\FI}(\om_{\FI^{1,2,3}_{i,\hk,j}}\big|_{\tV_{\FI^1}})\\
{}&+\sum\limits_{0\leq i\leq j\leq k\leq p-1}(-1)^{i+j+k}\tf^{1,*}_{\FI}(\om_{\FI^{1,2,3}_{i,j,\hk}}\big|_{\tV_{\FI^1}}).
\end{align*}
Note also that
$$\FI^{1,2,3}_{i,j,\hj}=\FI^{1,2,3}_{i,\widehat{j+1},j+1}\ \text{if}\ 0\leq i\leq j\leq p-2,\qquad\FI^{1,2,3}_{\hi,i,j}=\FI^{1,2,3}_{i-1,\widehat{i-1},j}\ \text{if}\ 1\leq i\leq j\leq p-1$$
and, by \eqref{al-FI},
$$\tf^{1,*}_{\FI}(\om_{\FI^{2,3}_j}\big|_{\tV_{\FI^1}})=\tf^{1,*}_{\FI}\al^*_{\FI}\lam^*_{\FI^2,\FI^{2,3}_j}(\om_{\FI^{2,3}_j})
=\tf^{2,*}_{\FI}(\om_{\FI^{2,3}_j}\big|_{\tV_{\FI^2}}).$$
Thus
\begin{align*}
{}&(\de\Ga\om)_{\FI}-(\Ga\de\om)_{\FI}\\
={}&\sum\limits_{0\leq i\leq j\leq p-1}(-1)^{i+1}\tf^{1,*}_{\FI}(\om_{\FI^{1,2,3}_{i,j,\hj}}\big|_{\tV_{\FI^1}})+\sum\limits_{0\leq i\leq j\leq p-1}(-1)^{j+1}\tf^{1,*}_{\FI}(\om_{\FI^{1,2,3}_{\hi,i,j}}\big|_{\tV_{\FI^1}})\\
{}&+\sum\limits_{0\leq i\leq j\leq p-1}(-1)^j\tf^{1,*}_{\FI}(\om_{\FI^{1,2,3}_{i,\hi,j}}\big|_{\tV_{\FI^1}})+\sum\limits_{0\leq i<j\leq p-1}(-1)^i\tf^{1,*}_{\FI}(\om_{\FI^{1,2,3}_{i,\hj,j}}\big|_{\tV_{\FI^1}})\\
={}&\sum\limits_{i=0}^{p-1}(-1)^{i+1}\tf^{1,*}_{\FI}(\om_{\FI^{1,2,3}_{i,p-1,\widehat{p-1}}}\big|_{\tV_{\FI^1}})
+\sum\limits_{j=0}^{p-1}(-1)^{j+1}\tf^{1,*}_{\FI}(\om_{\FI^{1,2,3}_{\hat{0},0,j}}\big|_{\tV_{\FI^1}})
+\sum\limits_{j=0}^{p-1}(-1)^j\tf^{1,*}_{\FI}(\om_{\FI^{1,2,3}_{j,\hj,j}}\big|_{\tV_{\FI^1}})\\
={}&\sum\limits_{i=0}^{p-1}(-1)^{i+1}\tf^{1,*}_{\FI}(\om_{\FI^{1,2}_i}\big|_{\tV_{\FI^1}})
+\sum\limits_{j=0}^{p-1}(-1)^{j+1}\tf^{1,*}_{\FI}(\om_{\FI^{2,3}_j}\big|_{\tV_{\FI^1}})
+\sum\limits_{j=0}^{p-1}(-1)^j\tf^{1,*}_{\FI}(\om_{\FI^{1,3}_j}\big|_{\tV_{\FI^1}})\\
={}&\sum\limits_{i=0}^{p-1}(-1)^{i+1}\tf^{1,*}_{\FI}(\om_{\FI^{1,2}_i}\big|_{\tV_{\FI^1}})
+\sum\limits_{j=0}^{p-1}(-1)^{j+1}\tf^{2,*}_{\FI}(\om_{\FI^{2,3}_j}\big|_{\tV_{\FI^2}})
+\sum\limits_{j=0}^{p-1}(-1)^j\tf^{1,*}_{\FI}(\om_{\FI^{1,3}_j}\big|_{\tV_{\FI^1}})\\
={}&(\ubeal\om)_{\FI}-((\ube\om)_{\FI}+(\ual\om)_{\FI}).
\end{align*}

Second, it is clear that $\Ga=0$ on $\oE^*_{\CV}\subseteq\oC^0(\CV,\Om^*)$ and $\Ga$ maps $\oI^*_{\CV}$ into $\oI^*_{\CU}$. Hence, $\Ga:\ube+\ual\toRr\ubeal$ is a homotopy of spark homotopies.
\end{proof}

\begin{prop} \label{nat-induce-homotopy-hcomp}
Let $\al:\tf^1\to/=>/\tf^2:\CU\to\CV$ and $\be:\tg^1\to/=>/\tg^2:\CV\to\CW$ be natural transformations of compatible systems. Then there exists a homotopy
$$\Ga:(\ual\circ\utg^1+\utf^2\circ\ube)\toRr\ubecal$$
of spark homotopies.
\end{prop}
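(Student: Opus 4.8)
The plan is to construct $\Ga$ explicitly by the same ``prism-of-prisms'' recipe used in Propositions \ref{nat-induce-homotopy} and \ref{nat-induce-homotopy-vcomp}, now involving simultaneously the index-set maps of $\tf^1,\tf^2$ on the $\CU$-side and of $\tg^1,\tg^2$ on the $\CV$-side. For $\FI=I_0\dots I_p\in\MpVU$, writing $I^m:=\tf^m I$ ($m=1,2$) and $K^n:=\tg^n K$ ($n=1,2$) as in the earlier proofs, I would introduce, for $0\le i\le j\le p$, the ``mixed strings''
$$\FJ_{i}:=\tf^2\FI^{1,2}_i=(\tf^2 I^1_0\dots I^1_i I^2_i\dots I^2_p)\in\MppVV,\qquad (\FJ_i)^{1,2}_j\in\MpppVV,$$
and then define, for $\om\in\oC^p(\CW,\Om^q)$,
\begin{equation*}
\Ga\om\in\oC^{p-2}(\CU,\Om^q),\qquad (\Ga\om)_{\FI}=\sum_{0\le i\le j\le p-2}(-1)^{i+j}\,\tf^{1,*}_{\FI}\,\big((\tg^1)^*_{(\FJ_i)^{1,2}_j}\text{-type pullback of }\om\big)\big|_{\tV_{\FI^1}},
\end{equation*}
where the precise chart indices are dictated by requiring that the two boundary faces of this two-parameter prism recover $\ual\circ\utg^1$ (collapsing the $\be$-direction to $\tg^1$) and $\utf^2\circ\ube$ (collapsing the $\al$-direction to $\tf^2$), and the diagonal face recovers $\underline{\be\circ\al}$. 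Concretely I expect $(\Ga\om)_{\FI}$ to be a signed sum of terms of the form $\tf^{1,*}_{\FI}\big((\tg^1_{\,\cdot}\circ\text{embeddings})^*(\om_{\,\cdot})\big)$ indexed by pairs $(i,j)$, one index living on the $\CU$-level $\al$-prism and the other on the $\CV$-level $\be$-prism pushed forward along $\tf^2$.

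Next I would verify $\oD\circ\Ga-\Ga\circ\oD=\underline{\be\circ\al}-(\ual\circ\utg^1+\utf^2\circ\ube)$. Since $d$ commutes with everything in sight (all the operators are built from pullbacks along liftings and from restrictions, which commute with the exterior derivative), and since $\oD=\de+(-1)^p d$ by \eqref{oD-def}, it suffices to check the $\de$-identity $\de\circ\Ga-\Ga\circ\de=\underline{\be\circ\al}-(\ual\circ\utg^1+\utf^2\circ\ube)$. This is a bookkeeping computation entirely parallel to the one in Proposition \ref{nat-induce-homotopy-vcomp}: expand $(\de\Ga\om)_{\FI}$ and $(\Ga\de\om)_{\FI}$ using \eqref{delta-def} and Lemma \ref{lift-indep-embed}, split each resulting triple sum into regions according to the relative order of the deletion index $k$ and the prism indices $i,j$, and observe that almost all terms cancel in pairs. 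The surviving terms are exactly the three ``degenerate faces'' of the double prism: collapsing $i$ yields $\ual\circ\utg^1$ (with a sign), collapsing $j$ yields $\utf^2\circ\ube$ (after rewriting $\tf^{1,*}_{\FI}\al^*_{\FI}=\tf^{2,*}_{\FI}$ via \eqref{al-FI}, and the analogous rewriting $(\tg^1)^*\be^*=(\tg^2)^*$ on the $\CV$-side), and the full-diagonal term $i=j$ collapses to $\underline{\be\circ\al}$ using \eqref{composite-lift} together with the identity $\underline{\be\circ\al}_{\FI}=(\tg^2\tf^2)^*_{\FI}\circ(\text{restriction along }\be_{\FI^1}\text{-and-}\al_{\FI}\text{-type embeddings})$, i.e.\ the compatibility of $\underline{(\,\cdot\,)}$ with composition and with natural transformations established in Propositions \ref{compat-sys-induce-spark-comp} and \ref{nat-induce-homotopy}.

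Finally, $\Ga=0$ on $\oE^*_{\CW}\subseteq\oC^0(\CW,\Om^*)$ for degree reasons (the sum is empty when $p=0$, and for elements of $\oC^0$ there is no prism index to sum over), and $\Ga$ maps $\oC^*(\CW,\bbZ)$ into $\oC^*(\CU,\bbZ)$ since pullbacks of locally constant integer forms along liftings are again locally constant integer forms; hence $\Ga:(\ual\circ\utg^1+\utf^2\circ\ube)\toRr\underline{\be\circ\al}$ is a homotopy of spark homotopies. The main obstacle I anticipate is purely combinatorial: getting the chart-index assignments in the definition \eqref{Ga-def}-analogue exactly right so that all three boundary faces appear with the correct signs, because here the two prism directions live on \emph{different} atlases ($\al$ on $\CV$ over $\CU$, $\be$ on $\CW$ over $\CV$) and must be glued through the lifting $\tf^2$ — this is where the interchange between ``pull back along $\tf^2$ then restrict'' and ``restrict then pull back'' (Lemma \ref{lift-indep-embed}) and the two natural-transformation identities \eqref{al-FI} must be deployed carefully, rather than any conceptual difficulty.
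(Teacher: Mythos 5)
Your proposal is correct and takes essentially the same route as the paper: the paper's homotopy (denoted $\Xi$ in \eqref{Xi-def}) is exactly your double-indexed prism, realized entirely at the $\CW$-level as $(\Xi\om)_{\FI}=\sum_{0\le i\le j\le p-2}(-1)^{i+j}(\tg^1\tf^1)^*_{\FI}\big(\om_{\FI^{11,21,22}_{i,j}}\big|_{\tW_{\FI^{11}}}\big)$, i.e.\ interpolating $\tg^1\tf^1\Rightarrow\tg^1\tf^2\Rightarrow\tg^2\tf^2$ through the intermediate compatible system $\tg^1\circ\tf^2$ (which also fixes the type mismatch in your schematic $\FJ_i:=\tf^2\FI^{1,2}_i$, since the mixed strings must consist of $\CW$-indices). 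The verification then proceeds exactly as you outline: reduce to the $\de$-identity, cancel in pairs, and identify the three degenerate faces with $\ual\circ\utg^1$, $\utf^2\circ\ube$ and $\ubecal$ via \eqref{al-FI}, \eqref{compat-sys:lift-embed}, \eqref{composite-lift}, Lemma \ref{lift-indep-embed} and Remark \ref{restrict-indep-embed}.
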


\begin{proof}
Based on the notations introduced in the proofs of Propositions \ref{nat-induce-homotopy} and \ref{nat-induce-homotopy-vcomp}, we write
\begin{align*}
&\FI^{mn}:=\tg^n\FI^m=\tg^n\tf^m\FI,\\
&\FI^{mn,m'n'}_i:=I^{mn}_0\dots I^{mn}_i I^{m'n'}_i\dots I^{m'n'}_p,\\
&\FI^{mn,m'n',m''n''}_{i,j}:=I^{mn}_0\dots I^{mn}_i I^{m'n'}_i\dots I^{m'n'}_j I^{m''n''}_j\dots I^{m''n''}_p
\end{align*}
for $\FI=I_0\dots I_p\in\MpVU$ and $m,m',m'',n,n',n''=1,2$, and $\FI^{mn,m'n',m''n''}_{i,j,\hk}$, $\FI^{mn,m'n',m''n''}_{i,\hk,j}$, $\FI^{mn,m'n',m''n''}_{\hk,i,j}$ for their obvious meanings. For every $\om\in\oC^p(\CW,\Om^q)$, define
\begin{equation} \label{Xi-def}
\Xi\om\in\oC^{p-2}(\CU,\Om^q)\quad\text{with}\quad(\Xi\om)_{\FI}=\sum\limits_{0\leq i\leq j\leq p-2}(-1)^{i+j}(\tg^1\tf^1)^*_{\FI}\big(\om_{\FI^{11,21,22}_{i,j}}\big|_{\tW_{\FI^{11}}}\big).
\end{equation}
Similar to the proof of Proposition \ref{nat-induce-homotopy-vcomp}, the only non-trivial part for exhibiting $\Xi:(\ual\circ\utg^1+\utf^2\circ\ube)\toRr\ubecal$ as a homotopy of spark homotopies is to show that
$$\de\circ\Xi-\Xi\circ\de=\ubecal-(\ual\circ\utg^1+\utf^2\circ\ube).$$
Indeed, for any $\om\in\oC^p(\CW,\Om^q)$ and $\FI\in\MpmVU$, one may proceed as in Proposition \ref{nat-induce-homotopy-vcomp} to verify that
\begin{align*}
(\de\Xi\om)_{\FI}-(\Xi\de\om)_{\FI}={}&\sum\limits_{i=0}^{p-1}(-1)^{i+1}(\tg^1\tf^1)^*_{\FI}(\om_{\FI^{11,21,22}_{i,p-1,\widehat{p-1}}}\big|_{\tW_{\FI^{11}}})
+\sum\limits_{j=0}^{p-1}(-1)^{j+1}(\tg^1\tf^1)^*_{\FI}(\om_{\FI^{11,21,22}_{\hat{0},0,j}}\big|_{\tW_{\FI^{11}}})\\
{}&+\sum\limits_{j=0}^{p-1}(-1)^j(\tg^1\tf^1)^*_{\FI}(\om_{\FI^{11,21,22}_{j,\hj,j}}\big|_{\tW_{\FI^{11}}})\\
={}&\sum\limits_{i=0}^{p-1}(-1)^{i+1}(\tg^1\tf^1)^*_{\FI}(\om_{\FI^{11,21}_i}\big|_{\tW_{\FI^{11}}})
+\sum\limits_{j=0}^{p-1}(-1)^{j+1}(\tg^1\tf^1)^*_{\FI}(\om_{\FI^{21,22}_j}\big|_{\tW_{\FI^{11}}})\\
{}&+\sum\limits_{j=0}^{p-1}(-1)^j(\tg^1\tf^1)^*_{\FI}(\om_{\FI^{11,22}_j}\big|_{\tW_{\FI^{11}}}).
\end{align*}
Since
\begin{align*}
(\ubecal\om)_{\FI}&=\sum\limits_{j=0}^{p-1}(-1)^j(\tg^1\tf^1)^*_{\FI}(\om_{\FI^{11,22}_j}\big|_{\tW_{\FI^{11}}}),&(\text{Eq. \eqref{ual-def}})\\
(\ual\circ\utg^1\om)_{\FI}&=\sum\limits_{i=0}^{p-1}(-1)^i\tf^{1,*}_{\FI}((\utg^1\om)_{\FI^{1,2}_i}\big|_{\tV_{\FI^1}})&(\text{Eq. \eqref{ual-def}})\\
&=\sum\limits_{i=0}^{p-1}(-1)^i\tf^{1,*}_{\FI}\Big(\tg^{1,*}_{\FI^{1,2}_i}(\om_{\FI^{11,21}_i})\Big|_{\tV_{\FI^1}}\Big)&(\text{Eq. \eqref{utf-def}})\\
&=\sum\limits_{i=0}^{p-1}(-1)^i(\tg^1\tf^1)^*_{\FI}(\om_{\FI^{11,21}_i}\big|_{\tW_{\FI^{11}}})&(\text{Lemma \ref{lift-indep-embed}})
\end{align*}
and
\begin{align*}
(\utf^2\circ\ube\om)_{\FI})&=\tf^{2,*}_{\FI}((\ube\om)_{\FI^2})&(\text{Eq. \eqref{utf-def}})\\
&=\sum\limits_{j=0}^{p-1}(-1)^j\tf^{2,*}_{\FI}\,\tg^{1,*}_{\FI^2}\big(\om_{\FI^{21,22}_j}\big|_{\tW_{\FI^{21}}}\big)&(\text{Eq. \eqref{ual-def}})\\
&=\sum\limits_{j=0}^{p-1}(-1)^j\tf^{1,*}_{\FI}\,\al^*_{\FI}\,\tg^{1,*}_{\FI^2}\big(\om_{\FI^{21,22}_j}\big|_{\tW_{\FI^{21}}}\big)&(\text{Eq. \eqref{al-FI}})\\
&=\sum\limits_{j=0}^{p-1}(-1)^j\tf^{1,*}_{\FI}\,\tg^{1,*}_{\FI^1}\,(\tg^1\al_I)^*\big(\om_{\FI^{21,22}_j}\big|_{\tW_{\FI^{21}}}\big)&(\text{Eq. \eqref{compat-sys:lift-embed}})\\
&=\sum\limits_{j=0}^{p-1}(-1)^j(\tg^1\tf^1)^*_{\FI}(\tg^1\al_I)^*\big(\om_{\FI^{21,22}_j}\big|_{\tW_{\FI^{21}}}\big)&(\text{Eq. \eqref{composite-lift}})\\
&=\sum\limits_{j=0}^{p-1}(-1)^j(\tg^1\tf^1)^*_{\FI}(\om_{\FI^{21,22}_j}\big|_{\tW_{\FI^{11}}}),&(\text{Remark \ref{restrict-indep-embed}})
\end{align*}
it follows that $(\de\Xi\om)_{\FI}-(\Xi\de\om)_{\FI}=(\ubecal\om)_{\FI}-((\ual\circ\utg^1\om)_{\FI}+(\utf^2\circ\ube\om)_{\FI})$, which completes the proof.
\end{proof}

Therefore we have proved:

\begin{thm} \label{oS-2-functor}
$\oS:\GPreOrb^{\op}\to\SpCx$ is a 2-functor that sends
\begin{enumerate}[label={\rm(\arabic*)}]
\item \label{oS-2-functor:ob}
    each good atlas $\CU$ to the spark complex
    $$\oS_{\CU}=(\oF^*_{\CU},\oE^*_{\CU},\oI^*_{\CU}):=(\Tot(\oC^*(\CU,\Om^*)),\oOm^*(\CU),\oC^*(\CU,\bbZ)),$$
\item \label{oS-2-functor:1-cell}
    each compatible system $\tf:\CU\to\CV$ to the spark homomorphism
    $$\utf:\oS_{\CV}\to\oS_{\CU},$$
\item \label{oS-2-functor:2-cell}
    each natural transformation $\al:\tf^1\to/=>/\tf^2:\CU\to\CV$ of compatible systems to the spark homotopy class
    $$[\ual]:\utf^1\to/=>/\utf^2:\oS_{\CV}\to\oS_{\CU}.$$
\end{enumerate}
\end{thm}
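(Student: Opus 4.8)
The proof will simply assemble the 2-functor from the results already established, so that no further computation is needed. On objects, Theorem \ref{oSU-def} shows that each good atlas $\CU$ is sent to a genuine spark complex $\oS_{\CU}$. On 1-cells, Propositions \ref{compat-sys-induce-spark} and \ref{compat-sys-induce-spark-comp} show that each compatible system $\tf:\CU\to\CV$ yields a spark homomorphism $\utf:\oS_{\CV}\to\oS_{\CU}$ with $\underline{\tg\circ\tf}=\utf\circ\utg$; together with the observation that $\underline{1_{\CU}}=1_{\oS_{\CU}}$ (immediate from the defining formula \eqref{utf-def} applied to the identity functor and the identity liftings), this is precisely the contravariant functor $\oS:\GPreOrb^{\op}\to\SpCx$ of \eqref{oS} at the level of underlying $1$-categories. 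Hence only the action on 2-cells and the three accompanying compatibilities remain to be treated.

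By Proposition \ref{nat-induce-homotopy}, each natural transformation $\al:\tf^1\to/=>/\tf^2$ of compatible systems induces a spark homotopy $\ual:\utf^1\to/=>/\utf^2$, and therefore a spark homotopy class $[\ual]:\utf^1\to/=>/\utf^2$; since the 2-cells of $\GPreOrb$ are natural transformations themselves (not equivalence classes), the assignment $\al\mapsto[\ual]$ is automatically well defined, and it preserves the direction of 2-cells, as befits the operation $(-)^{\op}$ that reverses only 1-cells. Identity 2-cells are preserved: for the identity natural transformation on $\tf$, every string $\FI^{1,2}_j$ occurring in formula \eqref{ual-def} has two adjacent entries equal to $\tf I_j$, so the alternating condition in \eqref{oCpq-def} forces $\om_{\FI^{1,2}_j}=0$; thus $\underline{1_{\tf}}=0$, which represents the identity 2-cell on $\utf$ in $\SpCx$.

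Vertical composition is respected: for $\tf^1\to/=>/^{\al}\tf^2\to/=>/^{\be}\tf^3$, the vertical composite of the classes $[\ual]$ and $[\ube]$ in $\SpCx$ is $[\ube+\ual]$ (see the proof of Proposition \ref{SpCx-Cat}), while Proposition \ref{nat-induce-homotopy-vcomp} supplies a homotopy $\Ga:\ube+\ual\toRr\ubeal$, so $[\ube+\ual]=[\ubeal]$, the image of the vertical composite $\be\al$. Horizontal composition is respected: for $\al:\tf^1\to/=>/\tf^2:\CU\to\CV$ and $\be:\tg^1\to/=>/\tg^2:\CV\to\CW$, the horizontally composable pair in $\SpCx$ consists of $[\ube]$ on $\utg^m:\oS_{\CW}\to\oS_{\CV}$ followed by $[\ual]$ on $\utf^m:\oS_{\CV}\to\oS_{\CU}$, whose horizontal composite is $[\ual\circ\utg^1+\utf^2\circ\ube]$ by the formula in the proof of Proposition \ref{SpCx-Cat}; Proposition \ref{nat-induce-homotopy-hcomp} then provides a homotopy $\Ga:(\ual\circ\utg^1+\utf^2\circ\ube)\toRr\ubecal$, so this class equals $[\ubecal]$, the image of the horizontal composite $\be\circ\al$. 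Preservation of vertical and horizontal composition together with all identities (of 1-cells and of 2-cells) is precisely what is required for $\oS$ to be a strict 2-functor.

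As is plain, there is no genuine obstacle here: all the analytic substance has been absorbed into Propositions \ref{compat-sys-induce-spark}--\ref{nat-induce-homotopy-hcomp}, and what is left is bookkeeping. The only two points calling for any care are the vanishing $\underline{1_{\tf}}=0$, which rests on the alternating-sign condition defining $\oC^p(\CU,\Om^q)$, and keeping the variance conventions straight -- 1-cells reversed, 2-cells not -- when matching the composites produced in Propositions \ref{nat-induce-homotopy-vcomp} and \ref{nat-induce-homotopy-hcomp} against the composition laws of $\SpCx$ recorded in the proof of Proposition \ref{SpCx-Cat}.
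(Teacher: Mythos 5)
Your proposal is correct and follows essentially the same route as the paper, which proves Theorem \ref{oS-2-functor} simply by assembling Theorem \ref{oSU-def} and Propositions \ref{compat-sys-induce-spark}--\ref{nat-induce-homotopy-hcomp} ("Therefore we have proved"). Your explicit verifications of the identity conditions --- $\underline{1_{\CU}}=1_{\oS_{\CU}}$ from \eqref{utf-def} and $\underline{1_{\tf}}=0$ via the alternating condition in \eqref{oCpq-def} applied to the repeated adjacent entry in $\FI^{1,2}_j$ --- are correct and merely make explicit what the paper leaves implicit.
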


\section{Spark characters on good atlases as graded rings}

Composing the 2-functor $\oS$ (see Theorem \ref{oS-2-functor}) with the 2-functor $\Char:\SpCx\to\GAb$ (see \eqref{Char}) gives rise to the \emph{spark character 2-functor}
$$\oH:\GPreOrb^{\op}\to\GAb$$
on good atlases as described below:

\begin{thm} \label{oH-2-functor}
$\oH:\GPreOrb^{\op}\to\GAb$ is a 2-functor that sends
\begin{enumerate}[label={\rm(\arabic*)}]
\item \label{oH-2-functor:ob}
    each good atlas $\CU$ to the graded abelian group
    $$\oH^*(\CU):=\hH(\oF^*_{\CU},\oE^*_{\CU},\oI^*_{\CU})$$
    of spark characters on $\CU$,
\item \label{oH-2-functor:1-cell}
    each compatible system $\tf:\CU\to\CV$ to the homomorphism
    $$\utf_*:\oH^*(\CV)\to\oH^*(\CU)$$
    of graded abelian groups,
\item \label{oH-2-functor:2-cell}
    each natural transformation $\al:\tf^1\to/=>/\tf^2:\CU\to\CV$ of compatible systems to the identity 2-cell on $\utf^{1,*}=\utf^{2,*}$.
\end{enumerate}
Moreover, the diagram
$$\bfig
\qtriangle<1000,400>[\GPreOrb^{\op}`\SpCx`\GAb;\oS`\oH`\Char]
\efig$$
is commutative.
\end{thm}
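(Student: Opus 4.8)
The plan is to obtain $\oH$ for free as the composite of two 2-functors already established, so that no genuinely new construction is required and the proof reduces to unwinding definitions. By Theorem~\ref{oS-2-functor} the assignment $\CU\mapsto\oS_{\CU}$ underlies a 2-functor $\oS:\GPreOrb^{\op}\to\SpCx$, and by \eqref{Char} (which rests on Proposition~\ref{Char-functor}) the spark character construction is a 2-functor $\Char:\SpCx\to\GAb$. I would therefore simply \emph{define} $\oH:=\Char\circ\oS$. Since the composite of two 2-functors is again a 2-functor, $\oH:\GPreOrb^{\op}\to\GAb$ is a 2-functor, and the triangle in the statement commutes by construction; this settles the final sentence of the theorem at once.

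It then remains only to identify $\oH$ explicitly by tracing objects, 1-cells and 2-cells through $\oS$ followed by $\Char$. On a good atlas $\CU$, $\oS$ returns $\oS_{\CU}=(\oF^*_{\CU},\oE^*_{\CU},\oI^*_{\CU})$ and $\Char$ returns $\hH(\oF^*_{\CU},\oE^*_{\CU},\oI^*_{\CU})$, which is $\oH^*(\CU)$ by definition; this is item~(1). On a compatible system $\tf:\CU\to\CV$, $\oS$ returns the spark homomorphism $\utf:\oS_{\CV}\to\oS_{\CU}$ of Proposition~\ref{compat-sys-induce-spark}, and $\Char$ returns the induced map $\utf_*:\oH^*(\CV)\to\oH^*(\CU)$, $[a]\mapsto[\utf a]$, of Proposition~\ref{Char-functor}; this is item~(2), and compatibility with composition of 1-cells is inherited from Proposition~\ref{compat-sys-induce-spark-comp} together with the functoriality of $\Char$.

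The only clause carrying any content is the one on 2-cells, and even there the required facts are already in place. Given a natural transformation $\al:\tf^1\to/=>/\tf^2:\CU\to\CV$ of compatible systems, $\oS$ returns the spark homotopy class $[\ual]:\utf^1\to/=>/\utf^2$ of Theorem~\ref{oS-2-functor}\ref{oS-2-functor:2-cell} (constructed in Proposition~\ref{nat-induce-homotopy}). Because $\GAb$ is regarded as a 2-category with only identity 2-cells, asserting that $\oH$ is a 2-functor into it entails in particular that $\oH(\tf^1)=\oH(\tf^2)$, i.e. that $\utf^{1,*}=\utf^{2,*}$; this equality is exactly the ``$f_*=g_*$'' half of Proposition~\ref{Char-functor}, applied to the spark homotopy $\ual:\utf^1\to/=>/\utf^2$. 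Hence $\Char$ sends $[\ual]$ to the identity 2-cell on $\utf^{1,*}=\utf^{2,*}$, which is item~(3). Compatibility with vertical and horizontal composition of 2-cells holds vacuously on the target (all its 2-cells are identities) and, on the source, is precisely what Propositions~\ref{nat-induce-homotopy-vcomp} and~\ref{nat-induce-homotopy-hcomp} supply at the level of $\SpCx$ --- the equalities $[\ube+\ual]=[\ubeal]$ and $[\ual\circ\utg^1+\utf^2\circ\ube]=[\ubecal]$ of spark homotopy classes --- which $\Char$ collapses to equalities of identity 2-cells.

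I do not expect a genuine obstacle here: the hard work has been done in the earlier sections, namely the 2-functoriality of $\oS$ (Theorem~\ref{oS-2-functor}) and of $\Char$ (Proposition~\ref{Char-functor}). The one point deserving care is the precise reading of item~(3): a 2-functor valued in $\GAb$-with-trivial-2-cells must send 2-cell-connected 1-cells to \emph{equal} 1-cells, and verifying this is just an application of the homotopy-invariance clause of Proposition~\ref{Char-functor} to the spark homotopy produced in Proposition~\ref{nat-induce-homotopy}. Once that is granted, the theorem follows formally from the associativity of 2-functor composition.
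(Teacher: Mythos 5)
Your proposal is correct and is exactly the paper's route: the paper also obtains $\oH$ as the composite $\Char\circ\oS$ of the two 2-functors from Theorem \ref{oS-2-functor} and \eqref{Char}, so the theorem (including the commuting triangle) holds by construction, with the 2-cell clause following from the homotopy-invariance part of Proposition \ref{Char-functor} just as you argue.
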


The set $\oH^*(\CU)$ of spark characters on a good atlas $\CU$ carries more structures than a graded abelian group; in fact, it is a graded commutative ring. To see this, first note that there is a \emph{cup product} $\cup$ defined on $\oF^*_{\CU}=\Tot(\oC^*(\CU,\Om^*))$ with
\begin{equation} \label{cup-def}
(\om\cup\eta)_{\FI}:=(-1)^{jn}\om_{\FI_{\leq m}}|_{\tU_{\FI}}\wedge\eta_{\FI_{\geq m}}|_{\tU_{\FI}}\in\oC^{m+n}(\CU,\Om^{j+k})
\end{equation}
for all $\om\in\oC^m(\CU,\Om^j)$, $\eta\in\oC^n(\CU,\Om^k)$, $\FI=I_0\dots I_{m+n}\in\MmnVU$, where
$$\FI_{\leq m}=I_0\dots I_m\quad\text{and}\quad\FI_{\geq m}=I_m\dots I_{m+n}:$$

\begin{prop} \label{cup-prop}
The cup product $\cup$ on $(\oF^*_{\CU},\oD)$ is well-defined, associative and satisfies the Leibniz rule, i.e.,
\begin{equation} \label{cup-Leibniz}
\oD(\om\cup\eta)=\oD\om\cup\eta+(-1)^{j+m}\om\cup\oD\eta
\end{equation}
for all $\om\in\oC^m(\CU,\Om^j)$, $\eta\in\oC^n(\CU,\Om^k)$. Moreover, $\cup$ induces the wedge product on $\oE^*_{\CU}$ and the multiplication of numbers on $\oI^*_{\CU}$.
\end{prop}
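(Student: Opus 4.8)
The plan is to establish the four assertions in turn — well-definedness of \eqref{cup-def}, the Leibniz identity \eqref{cup-Leibniz}, associativity, and the identifications on $\oE^*_{\CU}$ and $\oI^*_{\CU}$ — with essentially all of the work sitting in the $\de$-part of the Leibniz identity. For well-definedness, fix $\om\in\oC^m(\CU,\Om^j)$, $\eta\in\oC^n(\CU,\Om^k)$ and a string $\FI=I_0\dots I_{m+n}$. Since $\cup\FI=(\cup\FI_{\le m})\cup(\cup\FI_{\ge m})$, Definition \ref{good-atlas}\ref{good-atlas:finite-intersection} gives $U_{\FI}=U_{\FI_{\le m}}\cap U_{\FI_{\ge m}}$; in particular $U_{\FI}\subseteq U_{\FI_{\le m}}$ and $U_{\FI}\subseteq U_{\FI_{\ge m}}$, so Definition \ref{good-atlas}\ref{good-atlas:embedding} furnishes embeddings $\tU_{\FI}\to\tU_{\FI_{\le m}}$ and $\tU_{\FI}\to\tU_{\FI_{\ge m}}$ along which the restrictions in \eqref{cup-def} are taken. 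By Remark \ref{restrict-indep-embed} these restrictions do not depend on the chosen embeddings, and, arguing as there via Remark \ref{two-embedding-group}, each of $\om_{\FI_{\le m}}|_{\tU_{\FI}}$ and $\eta_{\FI_{\ge m}}|_{\tU_{\FI}}$ — hence their wedge — is $G_{\FI}$-invariant, so that $(\om\cup\eta)_{\FI}\in\Om^{j+k}(\tU_{\FI})^{G_{\FI}}$; the antisymmetry demanded by \eqref{oCpq-def} is then read off from that of $\om$ and $\eta$ by tracking the signs produced by transpositions of the slots of $\FI$, and $\bbZ$-bilinearity in $(\om,\eta)$ is immediate.

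For the Leibniz identity, write $\oD=\de+(-1)^pd$ on $\oC^p$. Pullback along the embeddings commutes with both the exterior derivative and $\wedge$, so the ``$d$-part'' of \eqref{cup-Leibniz}, namely $(-1)^{m+n}d(\om\cup\eta)=(-1)^m(d\om)\cup\eta+(-1)^{j+m+n}\om\cup(d\eta)$, reduces to the ordinary graded Leibniz rule for $d$ together with a comparison of the $(-1)^{jn}$-type factors in \eqref{cup-def}. The substantive part is the ``$\de$-part'', $\de(\om\cup\eta)=(\de\om)\cup\eta+(-1)^{j+m}\om\cup(\de\eta)$: I would expand $(\de(\om\cup\eta))_{\FI}=\sum_{k=0}^{m+n+1}(-1)^k(\om\cup\eta)_{\FI_{\hk}}|_{\tU_{\FI}}$ for a string $\FI=I_0\dots I_{m+n+1}$ and split the sum at $k=m$. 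For $k\le m$ the front block of $\FI_{\hk}$ is the $k$-th face of $\FI_{\le m+1}$ while its back block is $\FI_{\ge m+1}$, independent of $k$, so completing $\sum_{k=0}^m(-1)^k\om_{(\FI_{\le m+1})_{\hk}}$ to the full alternating sum $(\de\om)_{\FI_{\le m+1}}$ reassembles this part of the sum into $(\de\om)\cup\eta$ minus a single boundary term; for $k\ge m+1$ the front block is $\FI_{\le m}$, independent of $k$, and the back blocks run over the faces of $\FI_{\ge m}$, so analogously this part reassembles into $(-1)^{j+m}\om\cup(\de\eta)$ minus a single boundary term; the two boundary terms are negatives of one another and cancel. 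Getting every sign consistent in this Alexander--Whitney-style telescoping — both the $(-1)^k$ coming from $\de$ and the $(-1)^{jn}$-type factors coming from \eqref{cup-def} — is the step I expect to be the main obstacle.

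For associativity, unwinding \eqref{cup-def} twice shows that, for $\om\in\oC^m(\CU,\Om^j)$, $\eta\in\oC^n(\CU,\Om^k)$, $\zeta\in\oC^r(\CU,\Om^l)$ and $\FI=I_0\dots I_{m+n+r}$, both $((\om\cup\eta)\cup\zeta)_{\FI}$ and $(\om\cup(\eta\cup\zeta))_{\FI}$ equal $(-1)^{jn+jr+kr}\,\om_{\FI_{\le m}}|_{\tU_{\FI}}\wedge\eta_{I_m\dots I_{m+n}}|_{\tU_{\FI}}\wedge\zeta_{\FI_{\ge m+n}}|_{\tU_{\FI}}$, using $(\FI_{\le m+n})_{\le m}=\FI_{\le m}$, $(\FI_{\ge m})_{\ge n}=\FI_{\ge m+n}$ (and their companions), associativity of $\wedge$, and the embedding-independence of the restrictions; the only genuine check is that the two resulting exponents of $-1$ agree, which a direct count confirms. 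Finally, on $\oE^*_{\CU}=\oOm^*(\CU)\subseteq\oC^0(\CU,\Om^*)$ one has $m=n=0$ and $\FI=I_0$, so \eqref{cup-def} reads $(\om\cup\eta)_{I_0}=\om_{I_0}\wedge\eta_{I_0}$, the wedge product of the DGA \eqref{oOm-def}, which again lies in $\oOm^*(\CU)$; and on $\oI^*_{\CU}=\oC^*(\CU,\bbZ)\subseteq\oC^*(\CU,\Om^0)$ one has $j=k=0$, so the sign $(-1)^{jn}$ is trivial and \eqref{cup-def} reads $(\om\cup\eta)_{\FI}=\om_{\FI_{\le m}}|_{\tU_{\FI}}\cdot\eta_{\FI_{\ge m}}|_{\tU_{\FI}}$, the pointwise product of locally constant integer-valued functions — i.e. the multiplication of numbers on $\oI^*_{\CU}$ — which is again $\bbZ$-valued.
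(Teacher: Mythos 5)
Most of your proposal is the computation the paper intends (its own proof is deferred to \cite[Lemma 3.8]{Du2016}): splitting $\oD$ into its $d$- and $\de$-parts, proving the $\de$-Leibniz identity $\de(\om\cup\eta)=(\de\om)\cup\eta+(-1)^{j+m}\om\cup(\de\eta)$ by the Alexander--Whitney telescoping in which the two extra boundary terms (both equal to $\pm(-1)^{jn+m}\om_{\FI_{\leq m}}|_{\tU_{\FI}}\wedge\eta_{\FI_{\geq m+1}}|_{\tU_{\FI}}$) cancel, the $d$-part identity you display, the associativity check via the common exponent $jn+jr+kr$, and the identifications of $\cup$ with the wedge product on $\oE^*_{\CU}$ and with multiplication of integers on $\oI^*_{\CU}$. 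I verified these sign identities; they are correct as you state them, and your use of Definition \ref{good-atlas}, Remark \ref{two-embedding-group} and Remark \ref{restrict-indep-embed} for the existence and embedding-independence of the restrictions and for $G_{\FI}$-invariance is fine.

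There is, however, a genuine gap in the well-definedness step: the antisymmetry required by \eqref{oCpq-def} cannot be ``read off by tracking signs''. A transposition of two slots both lying in the front block $\FI_{\leq m}$, or both in the back block $\FI_{\geq m}$, indeed produces the sign $-1$, but a transposition exchanging a front-block slot with a back-block slot does not. Concretely, take $m=1$, $n=0$, $j=k=0$ and $\FI=I_0I_1$: then $(\om\cup\eta)_{I_0I_1}=\om_{I_0I_1}\,\eta_{I_1}|_{\tU_{\FI}}$ while $(\om\cup\eta)_{I_1I_0}=-\om_{I_0I_1}\,\eta_{I_0}|_{\tU_{\FI}}$, so $(\om\cup\eta)_{I_0I_1}+(\om\cup\eta)_{I_1I_0}=\om_{I_0I_1}\,(\de\eta)_{I_0I_1}|_{\tU_{\FI}}$, which is nonzero for a generic good atlas and generic $\om,\eta$. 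Thus the front-face/back-face formula \eqref{cup-def} simply does not preserve the alternating groups $\oC^p(\CU,\Om^q)$, and no bookkeeping of signs can repair this; it is an issue the paper itself leaves implicit by deferring the proof to \cite{Du2016}. To obtain a correct statement one must either fix a total order on the countable set $\FVU$, define $(\om\cup\eta)_{\FI}$ by \eqref{cup-def} only on strictly increasing strings and extend by antisymmetry (the ordered \v{C}ech--de Rham complex as in \cite{Bott1982}), or else work in the full non-alternating \v{C}ech--de Rham complex; in either setting your $\de$-telescoping, the cancellation of the boundary terms, the associativity count, and the identifications on $\oE^*_{\CU}$ and $\oI^*_{\CU}$ go through verbatim.
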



The ring structure on $\oH^*(\CU)$ is then induced by the cup product $\cup$ on $(\oF^*_{\CU},\oD)$:

\begin{prop} \label{HU-ring}
$\oH^*(\CU)$ is a graded commutative ring, in which the product is given by
$$[\om]\star[\eta]=(-1)^{(k+1)(l+1)}[\eta]\star[\om]=[\om\cup c+(-1)^{k+1}r\cup\eta]=[\om\cup s+(-1)^{k+1}e\cup\eta]\in\oH^{k+l+1}(\CU)$$
for all $[\om]\in\oH^k(\CU)$ with $\oD \om=e-r$ and $[\eta]\in\oH^l(\CU)$ with $\oD\eta=c-s$.
\end{prop}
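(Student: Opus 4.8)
The plan is to check, in turn, that the displayed formula produces a spark character, that it is independent of all choices, that it is biadditive and associative, and finally that it is graded commutative; only this last point is delicate.

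\emph{The spark equation.} First I would record that if $\om$ is a spark of degree $k$ with $\oD\om=e-r$, then applying $\oD$ gives $\oD e=\oD r$, and since $\oE^{k+2}_{\CU}\cap\oI^{k+2}_{\CU}=\{0\}$ both $e$ and $r$ are $\oD$-closed; likewise $c,s$ are $\oD$-closed. Note also that $e,c$ lie in $\oOm^*(\CU)\subseteq\oC^0(\CU,\Om^*)$ (\v{C}ech degree $0$), while $r,s$ lie in $\oC^*(\CU,\bbZ)$ (de Rham degree $0$). Using the Leibniz rule of Proposition \ref{cup-prop} together with $\oD c=0=\oD r$,
\[
\oD\big(\om\cup c+(-1)^{k+1}r\cup\eta\big)=(e-r)\cup c+r\cup(c-s)=e\cup c-r\cup s,
\]
and since $\cup$ restricts to the wedge product on $\oOm^*(\CU)$ and to the integral cup product on $\oC^*(\CU,\bbZ)$ (Proposition \ref{cup-prop}), one has $e\cup c\in\oE^{k+l+2}_{\CU}$ and $r\cup s\in\oI^{k+l+2}_{\CU}$; hence $\om\cup c+(-1)^{k+1}r\cup\eta$ is a spark of degree $k+l+1$. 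Similarly the Leibniz rule gives
\[
\big(\om\cup c+(-1)^{k+1}r\cup\eta\big)-\big(\om\cup s+(-1)^{k+1}e\cup\eta\big)=\oD\big((-1)^k\om\cup\eta\big),
\]
so the two displayed representatives determine the same spark character, which I take as the definition of $[\om]\star[\eta]\in\oH^{k+l+1}(\CU)$.

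\emph{Well-definedness, biadditivity, associativity.} If $\om'-\om=\oD b+t$ with $b\in\oF^{k-1}_{\CU}$ and $t\in\oI^k_{\CU}$, then $e'=e$ and $r'=r+\oD t$, and a further application of the Leibniz rule (using $\oD c=0$) yields $[\om']\star[\eta]-[\om]\star[\eta]=\big[\oD\big((-1)^{k+1}t\cup\eta-b\cup c\big)-t\cup s\big]$ with $t\cup s\in\oI^{k+l+1}_{\CU}$, hence $=0$; a parallel computation using $\oD r=0$ and $r\cup t'\in\oI^{k+l+1}_{\CU}$ for $t'\in\oI^l_{\CU}$ handles a change of representative for $[\eta]$. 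Biadditivity is immediate from the bilinearity of $\cup$ and the additivity of the spark equation. Associativity is verified directly: expanding $([\om]\star[\eta])\star[\xi]$ and $[\om]\star([\eta]\star[\xi])$ via the formula and the associativity of $\cup$, both equal
\[
\big[\om\cup c\cup g+(-1)^{k+1}r\cup\eta\cup g+(-1)^{k+l}r\cup s\cup\xi\big],
\]
where $\oD\xi=g-u$ with $g\in\oE_{\CU}$, $u\in\oI_{\CU}$.

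\emph{Graded commutativity --- the main obstacle.} I would compare $[\om]\star[\eta]=[\om\cup s+(-1)^{k+1}e\cup\eta]$ with $[\eta]\star[\om]=[\eta\cup e+(-1)^{l+1}s\cup\om]$ term by term. Since $e$ is a $q$-form on $\CU$ --- so of \v{C}ech degree $0$ and compatible with embeddings --- a direct computation from \eqref{cup-def} and the graded commutativity of the wedge product gives the \emph{strict} identity $\eta\cup e=(-1)^{(k+1)l}\,e\cup\eta$, which after collecting signs matches the $e\cup\eta$-terms on both sides; this reduces graded commutativity to the single assertion $[\om\cup s]=(-1)^{k(l+1)}[s\cup\om]$. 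For this I would introduce a bilinear degree $-1$ operation $\cup_1$ on $\oC^*(\CU,\Om^*)$, defined by combining Steenrod's classical simplicial cup-$1$ product on the nerve of $\{U_i\mid i\in V_{\CU}\}$ in the \v{C}ech direction with the wedge product in the de Rham direction, arranged so that it (i) satisfies the Hirsch identity
\[
\oD(\om\cup_1 s)+(\oD\om)\cup_1 s+(-1)^k\om\cup_1(\oD s)=\om\cup s-(-1)^{k(l+1)}s\cup\om,
\]
(ii) vanishes whenever its first argument has \v{C}ech degree $0$, and (iii) is integral-valued on $\oC^*(\CU,\bbZ)\times\oC^*(\CU,\bbZ)$. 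Granting this, $\oD s=0$, $e\cup_1 s=0$ by (ii), and $r\cup_1 s\in\oC^*(\CU,\bbZ)=\oI^*_{\CU}$ by (iii) give
\[
\om\cup s-(-1)^{k(l+1)}s\cup\om=\oD(\om\cup_1 s)+(e-r)\cup_1 s=\oD(\om\cup_1 s)-r\cup_1 s\in\oD\oF^*_{\CU}+\oI^*_{\CU},
\]
which is exactly $[\om\cup s]=(-1)^{k(l+1)}[s\cup\om]$. Combining the two comparisons yields $[\om]\star[\eta]=(-1)^{(k+1)(l+1)}[\eta]\star[\om]$, and together with the previous steps this exhibits $\oH^*(\CU)$ as a graded commutative ring. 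The crux is thus the construction of $\cup_1$ and the verification of (i)--(iii); everything else is sign-bookkeeping on top of Proposition \ref{cup-prop}.
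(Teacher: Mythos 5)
The paper offers no proof of Proposition \ref{HU-ring} beyond the remark that it can be proved as in \cite[Theorem 3.11]{Du2016}, so there is no in-paper argument to compare yours with; judged on its own, your proposal is structurally correct and the computations you actually carry out are right. The spark equation $\oD(\om\cup c+(-1)^{k+1}r\cup\eta)=e\cup c-r\cup s$, the identity $(\om\cup c+(-1)^{k+1}r\cup\eta)-(\om\cup s+(-1)^{k+1}e\cup\eta)=\oD\big((-1)^k\om\cup\eta\big)$, independence of representatives (note that $\oD\om'=e-(r-\oD t)$, so your $r'=r+\oD t$ has the wrong sign, harmlessly, since the difference is still $\oD(\cdot)$ plus an element of $\oI^*_{\CU}$), biadditivity and associativity all follow from Proposition \ref{cup-prop} exactly as you say. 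Your key observation that $\eta\cup e=(-1)^{(k+1)l}e\cup\eta$ holds \emph{strictly} --- because $e$ has {\v C}ech degree $0$ and $\de e=0$, so its two relevant restrictions agree on each $\tU_{\FI}$ --- is correct, and it legitimately reduces graded commutativity to the single statement $[\om\cup s]=(-1)^{k(l+1)}[s\cup\om]$; this is indeed the only place where the front-face/back-face noncommutativity of \eqref{cup-def} enters.

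What you leave unproved is the existence of $\cup_1$ with your properties (i)--(iii), and this is the one genuine incompleteness. The idea is sound: the classical Steenrod interleaving formula, applied in the {\v C}ech direction with the second argument of form degree $0$ and with all restrictions taken to $\tU_{\FI}$ along the embeddings provided by Definition \ref{good-atlas}\ref{good-atlas:embedding} (independence of choices by Remark \ref{restrict-indep-embed}), does satisfy a Hirsch identity, vanishes when the first argument has {\v C}ech degree $0$ (the defining sum is empty), and is integer-valued on $\oI^*_{\CU}\times\oI^*_{\CU}$. But to call the proof complete you must actually write this formula down and verify the identity with signs matched to \eqref{oD-def}, and you must do so in whatever tuple convention makes Proposition \ref{cup-prop} itself well defined: the naive front/back-face formulas (for $\cup$ and for your $\cup_1$ alike) do not preserve the antisymmetry condition in \eqref{oCpq-def}, so the same device (ordered tuples or antisymmetrization) implicitly used for $\cup$ must be carried along for $\cup_1$. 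With that supplied, your argument establishes the proposition and is, in effect, a self-contained substitute for the external citation on which the paper relies.
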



The proofs of Propositions \ref{cup-prop} and \ref{HU-ring} are similar to \cite[Lemma 3.8]{Du2016} and \cite[Theorem 3.11]{Du2016}, respectively; so, the details are left to the readers.

The homomorphism $\utf_*:\oH^*(\CV)\to\oH^*(\CU)$ given in Theorem \ref{oH-2-functor}\ref{oH-2-functor:1-cell} is actually a homomorphism of graded commutative rings:

\begin{prop} \label{compat-sys-induce-ring-homo}
Each compatible system $\tf:\CU\to\CV$ induces a homomorphism
$$\utf_*:\oH^*(\CV)\to\oH^*(\CU)$$
of graded commutative rings.
\end{prop}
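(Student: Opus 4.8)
The plan is to reduce the statement to the single fact that the spark homomorphism $\utf:\oS_\CV\to\oS_\CU$ of Proposition \ref{compat-sys-induce-spark} is \emph{multiplicative} for the cup products of Proposition \ref{cup-prop}, i.e.\ that $\utf(\om\cup\eta)=\utf\om\cup\utf\eta$ for all $\om\in\oC^m(\CV,\Om^j)$ and $\eta\in\oC^n(\CV,\Om^k)$. Granting this, the proposition follows at once. By Theorem \ref{oH-2-functor}\ref{oH-2-functor:1-cell} and Proposition \ref{Char-functor}, $\utf_*$ is already a well-defined homomorphism of graded abelian groups sending each spark character $[a]$ to $[\utf a]$, so only compatibility with the product $\star$ of Proposition \ref{HU-ring} has to be checked. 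Let $[\om]\in\oH^k(\CV)$ with $\oD\om=e-r$ and $[\eta]\in\oH^l(\CV)$ with $\oD\eta=c-s$. Then Proposition \ref{HU-ring} gives $[\om]\star[\eta]=[\om\cup c+(-1)^{k+1}r\cup\eta]$, whence, using multiplicativity of $\utf$,
$$\utf_*\big([\om]\star[\eta]\big)=\big[\utf(\om\cup c)+(-1)^{k+1}\,\utf(r\cup\eta)\big]=\big[\utf\om\cup\utf c+(-1)^{k+1}\,\utf r\cup\utf\eta\big].$$
On the other hand $\utf$ is a cochain map with $\utf(\oE^*_\CV)\subseteq\oE^*_\CU$ and $\utf(\oI^*_\CV)\subseteq\oI^*_\CU$ (Proposition \ref{compat-sys-induce-spark}), so $\oD(\utf\om)=\utf e-\utf r$ with $\utf e\in\oE^{k+1}_\CU$, $\utf r\in\oI^{k+1}_\CU$, and likewise $\oD(\utf\eta)=\utf c-\utf s$; hence Proposition \ref{HU-ring}, applied in $\CU$, yields $\utf_*[\om]\star\utf_*[\eta]=[\utf\om]\star[\utf\eta]=\big[\utf\om\cup\utf c+(-1)^{k+1}\,\utf r\cup\utf\eta\big]$, the same spark character.

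To prove $\utf(\om\cup\eta)=\utf\om\cup\utf\eta$ I argue componentwise. Fix $\om\in\oC^m(\CV,\Om^j)$, $\eta\in\oC^n(\CV,\Om^k)$ and a string $\FI=I_0\dots I_{m+n}\in\MmnVU$. Since $\tf$ acts on strings componentwise (Remark \ref{FI-notation}\ref{FI-notation:fI}), $\tf\FI_{\leq m}=(\tf\FI)_{\leq m}$ and $\tf\FI_{\geq m}=(\tf\FI)_{\geq m}$; thus, combining \eqref{utf-def} with \eqref{cup-def} and using that the pullback $\tf^*_\FI$ is an algebra map for the wedge product and preserves form degrees (so the sign $(-1)^{jn}$ is unaffected),
$$\big(\utf(\om\cup\eta)\big)_\FI=(-1)^{jn}\,\tf^*_\FI\big(\om_{\tf\FI_{\leq m}}\big|_{\tV_{\tf\FI}}\big)\wedge\tf^*_\FI\big(\eta_{\tf\FI_{\geq m}}\big|_{\tV_{\tf\FI}}\big).$$
Since $\cup\FI_{\leq m}\subseteq\cup\FI$ and $\cup\FI_{\geq m}\subseteq\cup\FI$, Lemma \ref{lift-indep-embed} applied with $\FI_{\leq m}$ (resp.\ $\FI_{\geq m}$) in the role of $J$ and $\FI$ in that of $I$ identifies $\tf^*_\FI(\om_{\tf\FI_{\leq m}}|_{\tV_{\tf\FI}})$ with $(\utf\om)_{\FI_{\leq m}}|_{\tU_\FI}$ and $\tf^*_\FI(\eta_{\tf\FI_{\geq m}}|_{\tV_{\tf\FI}})$ with $(\utf\eta)_{\FI_{\geq m}}|_{\tU_\FI}$. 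Substituting into the display and comparing with \eqref{cup-def} applied to $\utf\om\in\oC^m(\CU,\Om^j)$ and $\utf\eta\in\oC^n(\CU,\Om^k)$ yields $(\utf(\om\cup\eta))_\FI=(\utf\om\cup\utf\eta)_\FI$, as required.

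Unlike the 2-functoriality results of Section \ref{2-functor}, no homotopy needs to be constructed, and there is no genuine obstacle; the whole proof rests on the componentwise identity above. The one point demanding care is the passage between a string and its initial and final substrings when invoking Lemma \ref{lift-indep-embed}, i.e.\ noting that $\tf_\FI=\tf_{\cup\FI}$, that the componentwise action of $\tf$ makes $\tf\FI_{\leq m}$ the initial substring $(\tf\FI)_{\leq m}$ of $\tf\FI$, and that $\cup\FI_{\leq m}\subseteq\cup\FI$ (and similarly for $\FI_{\geq m}$), so that the hypotheses of the lemma are met and, by Remark \ref{restrict-indep-embed}, all the restrictions of $q$-forms appearing are independent of the embeddings used to take them.
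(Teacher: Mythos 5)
Your proposal is correct and follows essentially the same route as the paper: the paper likewise reduces the claim to showing that $\utf$ preserves the cup product \eqref{cup-def} and verifies this componentwise via \eqref{utf-def}, Lemma \ref{lift-indep-embed} (applied to $\cup\FI_{\leq m}\subseteq\cup\FI$ and $\cup\FI_{\geq m}\subseteq\cup\FI$), and the multiplicativity of pullback with respect to the wedge product. Your explicit spelling out of why cup-product preservation yields compatibility with $\star$ via Proposition \ref{HU-ring} is a detail the paper leaves implicit, but it is the same argument.
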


\begin{proof}
It suffices to show that the spark homomorphism $\utf:\oS_{\CV}\to\oS_{\CU}$ obtained in Proposition \ref{compat-sys-induce-spark} preserves the cup products in $\oF^*_{\CV}$ given by \eqref{cup-def}. Indeed, for any $\om\in\oC^m(\CV,\Om^j)$, $\eta\in\oC^n(\CV,\Om^k)$ and $\FI=I_0\dots I_{m+n}\in\MmnVU$,
\begin{align*}
(\utf\om\cup\utf\eta)_{\FI}&=(-1)^{jn}(\utf\om)_{\FI_{\leq m}}|_{\tU_{\FI}}\wedge(\utf\eta)_{\FI_{\geq m}}|_{\tU_{\FI}}&(\text{Eq. \eqref{cup-def}})\\
&=(-1)^{jn}\tf^*_{\FI_{\leq m}}(\om_{\tf\FI_{\leq m}})|_{\tU_{\FI}}\wedge\tf^*_{\FI_{\geq m}}(\eta_{\tf\FI_{\geq m}})|_{\tU_{\FI}}&(\text{Eq. \eqref{utf-def}})\\
&=(-1)^{jn}\tf^*_{\FI}(\om_{\tf\FI_{\leq m}}|_{\tV_{\tf\FI}})\wedge\tf^*_{\FI}(\eta_{\tf\FI_{\geq m}}|_{\tV_{\tf\FI}})&(\text{Lemma \ref{lift-indep-embed}})\\
&=\tf^*_{\FI}((-1)^{jn}\om_{\tf\FI_{\leq m}}|_{\tV_{\tf\FI}}\wedge\eta_{\tf\FI_{\geq m}}|_{\tV_{\tf\FI}})\\
&=\tf^*_{\FI}((\om\cup\eta)_{\tf\FI})&(\text{Eq. \eqref{cup-def}})\\
&=(\utf(\om\cup\eta))_{\FI},&(\text{Eq. \eqref{utf-def}})
\end{align*}
as desired.
\end{proof}

Let $\GCRng$ denote the category of $(\bbZ\text{-})$graded commutative rings and their homomorphisms, also considered as a 2-category only equipped with trivial 2-cells. Then, with Proposition \ref{compat-sys-induce-ring-homo} one may enhance Theorem \ref{oH-2-functor} to the following:

\begin{thm} \label{oH-2-functor-ring}
$\oH:\GPreOrb^{\op}\to\GCRng$ is a 2-functor that sends
\begin{enumerate}[label={\rm(\arabic*)}]
\item \label{oH-2-functor-ring:ob}
    each good atlas $\CU$ to the graded commutative ring
    $$\oH^*(\CU):=\hH(\oF^*_{\CU},\oE^*_{\CU},\oI^*_{\CU})$$
    of spark characters on $\CU$,
\item \label{oH-2-functor-ring:1-cell}
    each compatible system $\tf:\CU\to\CV$ to the homomorphism
    $$\utf_*:\oH^*(\CV)\to\oH^*(\CU)$$
    of graded commutative rings,
\item \label{oH-2-functor-ring:2-cell}
    each natural transformation $\al:\tf^1\to/=>/\tf^2:\CU\to\CV$ of compatible systems to the identity 2-cell on $\utf^{1,*}=\utf^{2,*}$.
\end{enumerate}
\end{thm}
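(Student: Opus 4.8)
The plan is to obtain Theorem \ref{oH-2-functor-ring} by refining the target of the $\GAb$-valued 2-functor of Theorem \ref{oH-2-functor} from $\GAb$ to $\GCRng$, all of the necessary enrichment of the data being already in place. On objects, Proposition \ref{HU-ring} equips each graded abelian group $\oH^*(\CU)$ with the structure of a graded commutative ring under the product $\star$, so assignment \ref{oH-2-functor-ring:ob} is literally that of Theorem \ref{oH-2-functor}\ref{oH-2-functor:ob} read in $\GCRng$. On 1-cells, Proposition \ref{compat-sys-induce-ring-homo} shows that the graded-group homomorphism $\utf_*:\oH^*(\CV)\to\oH^*(\CU)$ attached to a compatible system $\tf:\CU\to\CV$ in Theorem \ref{oH-2-functor}\ref{oH-2-functor:1-cell} is in fact a homomorphism of graded commutative rings, which gives \ref{oH-2-functor-ring:1-cell}.

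Next I would check that the 2-functor axioms transport across this change of target. A homomorphism of graded commutative rings is uniquely determined by its underlying homomorphism of graded abelian groups; hence the identities $\underline{\tg\circ\tf}_*=\utf_*\circ\utg_*$ and $\underline{1_{\CU}}_*=\id_{\oH^*(\CU)}$, which hold as equalities of graded-group homomorphisms by Theorem \ref{oH-2-functor} (equivalently, by Propositions \ref{compat-sys-induce-spark}, \ref{compat-sys-induce-spark-comp} and \ref{Char-functor}), continue to hold in $\GCRng$. For 2-cells, since $\GCRng$ is regarded as a 2-category with only identity 2-cells, it suffices to observe that every natural transformation $\al:\tf^1\to/=>/\tf^2:\CU\to\CV$ of compatible systems satisfies $\utf^{1,*}=\utf^{2,*}$ as (now ring) homomorphisms; this is recorded in Theorem \ref{oH-2-functor}\ref{oH-2-functor:2-cell}, whose proof applies Proposition \ref{Char-functor} to the spark homotopy $\ual:\utf^1\to/=>/\utf^2$ furnished by Proposition \ref{nat-induce-homotopy} and is unaffected by the refinement of the codomain. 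Thus $\al$ is necessarily sent to the identity 2-cell on the common homomorphism $\utf^{1,*}=\utf^{2,*}$, and the interchange law and unit conditions for 2-cells hold vacuously.

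I do not anticipate a genuine obstacle here: the entire mathematical content is carried by Propositions \ref{HU-ring} and \ref{compat-sys-induce-ring-homo}, and the theorem merely repackages Theorem \ref{oH-2-functor}. The single point that warrants a line of care is the observation that ``being a homomorphism of graded commutative rings'' refines ``being a homomorphism of graded abelian groups'' on the nose --- the underlying maps coincide --- so that the composition and identity equations already verified in $\GAb$ require no new coherence checks in $\GCRng$.
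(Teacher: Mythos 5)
Your proposal is correct and matches the paper's route exactly: the paper obtains Theorem \ref{oH-2-functor-ring} by enhancing Theorem \ref{oH-2-functor} via Propositions \ref{HU-ring} and \ref{compat-sys-induce-ring-homo}, with the functoriality and 2-cell conditions carried over unchanged since the underlying maps of graded abelian groups are the same. No gaps.
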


\appendix

\section{Appendix: a different spark complex on a good atlas} \label{Du-Zhao}

Given a good atlas $\CU=\{(\tU_I,G_I,\pi_I)\mid I\in\FVU\}$, a different spark complex can be constructed on $\CU$ as considered in \cite{Du2016}. Explicitly, let
$$\mVU$$
denote the free monoid on the set $V_{\CU}$, whose elements are strings
$$I=i_0\dots i_p$$
consisting of elements of $V_{\CU}$. Then, since $\mVU$ can be embedded into $\MVU$ in the obvious way as a submonoid, using the same notation scheme as in Remark \ref{FI-notation}\ref{FI-notation:k}--\ref{FI-notation:MpVU} one may define
$$\sC^p(\CU,\Om^q):=\Big\{(\om_I)\in\prod_{I\in\mpVU}\Om^q(\tU_I)^{G_I}\mathrel{\Big|}\om_I=-\om_{I_{j\mathrel{\leftrightarrow}k}}\ \text{whenever}\ 0\leq j,k\leq p\Big\},$$
which gives rise to a spark complex \cite{Du2016}
\begin{equation} \label{SU-def}
\CS_{\CU}=(\sF^*_{\CU},\sE^*_{\CU},\sI^*_{\CU}):=(\Tot(\sC^*(\CU,\Om^*)),\Om^*(\CU),\sC^*(\CU,\bbZ))
\end{equation}
in the same way as the construction of $\oS_{\CU}$ in Theorem \ref{oSU-def}. In particular, $\Om^q(\CU)$ consists of families
$$\{\om_i\in\Om^q(\tU_i)\mid i\in V_{\CU}\}$$
of differential $q$-forms such that
\begin{enumerate}[label=(\arabic*)]
\item each $\om_i$ is $G_i$-invariant, and
\item if $U_i\cap U_j\neq\varnothing$ and there exist embeddings
    $$(\tU_i,G_i,\pi_i)\toleft^{\lam_i}(\tU_{ij},G_{ij},\pi_{ij})\to^{\lam_j}(\tU_j,G_j,\pi_j),$$
    then $\lam^*_i(\om_i)=\lam^*_j(\om_j)$.
\end{enumerate}

Recall that a spark homomorphism $f:(\sF^*,\sE^*,\sI^*)\to(\oF^*,\oE^*,\oI^*)$ of spark complexes is a \emph{quasi-isomorphism} \cite{Hao2009} if
\begin{enumerate}[label=(\arabic*)]
\item $f$ is injective,
\item $f|_{\sI^*}$ induces an isomorphism $H^*(\sI^*)\cong H^*(\oI^*)$, and
\item $f|_{\sE^*}$ is an isomorphism\footnote{Our definition of quasi-isomorphisms here deviates a little bit from \cite{Hao2009}, where $f|_{\sE^*}$ is required to be an identity map. The prototype of this notion comes from \emph{subspark complexes} defined in \cite{Harvey2006}.}.
\end{enumerate}

In what follows we show that $\CS_{\CU}$ and $\oS_{\CU}$ are quasi-isomorphic spark complexes. Indeed, every map
$$\phi:\FVU\to V_{\CU}\quad\text{with}\quad\phi I\in I$$
gives rise to a quasi-isomorphism $\ophi:\CS_{\CU}\to\oS_{\CU}$ by extending each $\om\in\sC^p(\CU,\Om^q)$ to $\ophi\om\in\oC^p(\CU,\Om^q)$ with
\begin{equation} \label{phi-star-def}
(\ophi\om)_{\FI}=\om_{\phi\FI}|_{\tU_{\FI}}
\end{equation}
for all $\FI=I_0\dots I_p\in\MpVU$, where $\phi\FI:=(\phi I_0)\dots(\phi I_p)$ and, obviously, $\ophi\om$ is independent of the choice of the embedding $\lam_{\FI,\phi\FI}:(\tU_{\FI},G_{\FI},\pi_{\FI})\to(\tU_{\phi\FI},G_{\phi\FI},\pi_{\phi\FI})$ by Remark \ref{restrict-indep-embed}:

\begin{prop} \label{phi-quasi-iso}
$\ophi:\CS_{\CU}\to\oS_{\CU}$ is a quasi-isomorphism of spark complexes.
\end{prop}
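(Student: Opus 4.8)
The plan is to verify the three defining conditions of a quasi-isomorphism for the spark homomorphism $\ophi:\CS_{\CU}\to\oS_{\CU}$ defined by \eqref{phi-star-def}, having first checked that $\ophi$ is indeed a spark homomorphism. That preliminary check is routine: $\ophi$ commutes with $d$ trivially, and it commutes with $\de$ by the same computation style as in the proof of Proposition \ref{compat-sys-induce-spark} (using Remark \ref{restrict-indep-embed} to move restrictions past one another), and it obviously sends $\Om^*(\CU)$ into $\oOm^*(\CU)$ and $\sC^*(\CU,\bbZ)$ into $\oC^*(\CU,\bbZ)$; note in particular that on $\sC^0(\CU,\Om^*)=\Om^*(\CU)$ the map $\ophi$ restricted to the kernel of $\de$ is the identity (since a string $\FI=I_0$ of length one gives $\phi\FI = \phi I_0 \in I_0$, and a form on $\CU$ is compatible with the embedding $\tU_{I_0}\to\tU_{\phi I_0}$).

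First I would show $\ophi$ is injective. Suppose $\ophi\om=0$ for $\om\in\sC^p(\CU,\Om^q)$. For a string $I=i_0\dots i_p\in\mpVU$ of elements of $V_{\CU}$ (rather than of $\FVU$), the corresponding string in $\MpVU$ has $\cup I$-chart equal to $\tU_I$ itself, and $\phi I = i_0\dots i_p = I$ whenever each $\phi\{i_k\}=i_k$ — more carefully, one should note that $\phi$ restricted to singletons need not be the identity, so instead I would argue: the component $(\ophi\om)_{\FI}$ for $\FI$ the one-element-per-slot string built from $I$ is $\om_{\phi\FI}|_{\tU_{\FI}}$, and since the embedding $\tU_{\FI}\to\tU_{\phi\FI}$ is used and $\om$ is a family of forms compatible with embeddings, $\om_{\FI}$ is recovered from $\om_{\phi\FI}$ by pullback, so $\om_{\phi\FI}=0$ forces $\om_{\FI}=0$. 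Running this over all $I$ (and using that $\phi$ is surjective onto the relevant indices, or rather that every $\om_I$ with $I\in\mVU$ appears) gives $\om=0$.

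Next, $\ophi|_{\sE^*}:\Om^*(\CU)\to\oOm^*(\CU)$ is an isomorphism. Surjectivity: given $(\om_{\FI})\in\oOm^*(\CU)=\ker\de|_{\oC^0(\CU,\Om^*)}$, for each $i\in V_{\CU}$ set $\om_i:=\om_{\{i\}}$; the $\de$-closedness of $(\om_{\FI})$ together with the good-atlas axioms (embeddings exist whenever $U_I\subseteq U_J$, Definition \ref{good-atlas}\ref{good-atlas:embedding}, and $U_{I\cup J}=U_I\cap U_J$, Definition \ref{good-atlas}\ref{good-atlas:finite-intersection}) forces the compatibility condition defining $\Om^*(\CU)$, and one checks $\ophi$ of this family is the original $(\om_{\FI})$ by the embedding-independence in Remark \ref{restrict-indep-embed}. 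Injectivity on $\sE^*$ follows from the injectivity of $\ophi$ already shown. Finally, for the condition on $\sI^*$, I would show that $\ophi|_{\sI^*}:\sC^*(\CU,\bbZ)\to\oC^*(\CU,\bbZ)$ induces an isomorphism on cohomology; the cleanest route is that both $\sC^*(\CU,\bbZ)$ and $\oC^*(\CU,\bbZ)$ compute the same \v{C}ech-type cohomology of the good cover $\{U_i\}$ — the former is literally \v{C}ech cohomology of this cover with $\bbZ$ coefficients, and the latter is the analogous complex indexed by finite subsets of $V_{\CU}$, which is a standard cofinal/homotopy-equivalent variant (one can produce an explicit chain homotopy inverse, or invoke that both are isomorphic to $H^*(X;\bbZ)$ since the cover is good).

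The main obstacle I expect is the last point: proving $\ophi$ is a quasi-isomorphism on the integral subcomplexes, i.e. comparing the "vertex-indexed" \v{C}ech complex $\sC^*(\CU,\bbZ)$ with the "finite-subset-indexed" complex $\oC^*(\CU,\bbZ)$. The subtlety is that the two complexes have genuinely different index sets — strings of vertices versus strings of finite sets of vertices — and the chart over a string $I_0\dots I_p$ depends only on the union $\cup\FI$, so many strings collapse. One must either build an explicit chain homotopy between $\ophi\circ(\text{inclusion})$ and something manifestly quasi-isomorphism-inducing, using $\phi$ to go back down to vertices, or argue abstractly that both complexes are resolutions computing $H^*(X;\bbZ)$. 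I would favor the abstract argument (exactness of rows/columns of the associated double complexes, as in the proof of Theorem \ref{oSU-def}), since the explicit homotopy is bookkeeping-heavy and the conceptual statement — both are cohomology of the good cover — is transparent.
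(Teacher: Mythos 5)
Your overall route is the same as the paper's: show $\ophi$ is an injective cochain map, show $\ophi|_{\sE^*}:\Om^*(\CU)\to\oOm^*(\CU)$ is an isomorphism by restricting to singletons and using $\de$-closedness, and handle $\ophi|_{\sI^*}$ by the abstract good-cover argument (both covers $\{U_i\}$ and $\{U_I\}$ are good, both \v{C}ech-type complexes compute $H^*(X;\bbZ)$ by Leray, and the induced map is the standard comparison map) --- this last ``abstract option'' is exactly what the paper does, citing Leray's theorem and the general construction of \v{C}ech cohomology, so your preferred choice there is the right one and at the same level of detail as the published proof.

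The one step where your reasoning goes wrong is the injectivity. Your first instinct was correct and your ``more careful'' fallback is not: since $\phi I\in I$, for a singleton one necessarily has $\phi\{i\}=i$, so for the singleton-per-slot string $\FI=\{i_0\}\dots\{i_p\}$ one gets $\phi\FI=i_0\dots i_p$ and $\tU_{\FI}=\tU_{\{i_0,\dots,i_p\}}$, whence $(\ophi\om)_{\FI}=\om_{i_0\dots i_p}$ on the nose and injectivity is immediate. By contrast, the fallback you substitute --- that ``$\om$ is a family of forms compatible with embeddings, so $\om_{\FI}$ is recovered from $\om_{\phi\FI}$ by pullback'' --- is invalid for a general $\om\in\sC^p(\CU,\Om^q)$: elements of $\sC^p(\CU,\Om^q)$ are only required to be alternating and $G_I$-invariant; compatibility under embeddings is imposed only on $\Om^*(\CU)=\sE^*_{\CU}$. (Also, $\om_{\FI}$ is not even defined for a general $\FI\in\MpVU$ when $\om$ is indexed by $\mpVU$.) Since the correct argument is the trivial one you abandoned, this is easily repaired, but as written that paragraph does not prove injectivity. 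A smaller point of the same kind: recovering the original cochain in your surjectivity argument for $\sE^*$ uses $\de$-closedness applied to the string $\{\phi I\}\,I$ (giving $\om_{\{\phi I\}}|_{\tU_I}=\om_I$), not merely the embedding-independence of Remark \ref{restrict-indep-embed}.
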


\begin{proof}
First, $\ophi:\sF^*_{\CU}\to\oF^*_{\CU}$ is an injective cochain map. Let $D$ denote the differential on $\Tot(\sC^*(\CU,\Om^*))$. Then for all $\om\in\sC^p(\CU,\Om^q)$ and $\FI\in\MppVU$,
\begin{align*}
(\oD\circ\ophi\om)_{\FI}&=\sum\limits_{k=0}^{p+1}(-1)^k(\ophi\om)_{\FI_{\hk}}\big|_{\tU_{\FI}}+(-1)^p(d\ophi\om)_{\FI}&(\text{Eq. \eqref{delta-def} \& \eqref{oD-def}})\\
&=\sum\limits_{k=0}^{p+1}(-1)^k\om_{\phi\FI_{\hk}}\big|_{\tU_{\FI_{\hk}}}\big|_{\tU_{\FI}}+(-1)^p(d\om_{\phi\FI}|_{\tU_{\FI}})&(\text{Eq. \eqref{phi-star-def}})\\
&=\sum\limits_{k=0}^{p+1}(-1)^k\om_{(\phi\FI)_{\hk}}\big|_{\tU_{\FI}}+(-1)^p(d\om_{\phi\FI}|_{\tU_{\FI}})&(\text{Remark \ref{restrict-indep-embed}})\\
&=\sum\limits_{k=0}^{p+1}(-1)^k\om_{(\phi\FI)_{\hk}}\big|_{\tU_{\phi\FI}}\big|_{\tU_{\FI}}+(-1)^p(d\om)_{\phi\FI}\big|_{\tU_{\FI}}&(\text{Remark \ref{restrict-indep-embed}})\\
&=(\ophi\circ D\om)_{\FI},&(\text{Eq. \eqref{delta-def}, \eqref{oD-def}  \& \eqref{phi-star-def}})
\end{align*}
and the injectivity of $\ophi$ is obvious.

Second, $\ophi|_{\sI^*}$ induces an isomorphism $H^*(\sI^*)\cong H^*(\oI^*)$. Since $\{U_i\mid i\in V_{\CU}\}$ and $\{U_I\mid I\in\FVU\}$ are both good covers of $X$, it follows from Leray's theorem for sheaf cohomology (see \cite[Theorem III.4.13]{Bredon1997}) that
\begin{align*}
H^*(\sC^*(\CU,\bbZ))&\cong\check{H}^*(\{U_i\mid i\in V_{\CU}\};\bbZ)\cong H^*(X;\bbZ)\\
&\cong\check{H}^*(\{U_I\mid I\in\FVU\};\bbZ)\cong H^*(\oC^*(\CU,\bbZ)).
\end{align*}
Hence, $\ophi|_{\sI^*}:H^*(\sC^*(\CU,\bbZ))\cong H^*(\oC^*(\CU,\bbZ))$ follows from the general construction of the {\v C}ech cohomology (see \cite[Chapter 5]{Warner1983}).

Finally, $\ophi|_{\sE^*}$ is an isomorphism. Note that every $\{\om_i\in\Om^q(\tU_i)\mid i\in V_{\CU}\}\in\Om^q(\CU)$ uniquely extends to a $q$-form $\ophi\om\in\oOm^q(\CU)$ with
$$(\ophi\om)_I=\om_{\phi I}|_{\tU_I}$$
for all $I\in\FVU$, which is independent of choices of the map $\phi$ by the definition of $\Om^q(\CU)$. Hence, $\ophi|_{\sE^*}:\Om^*(\CU)\to\oOm^*(\CU)$ is an isomorphism.
\end{proof}

Proposition \ref{phi-quasi-iso} in conjunction with the following lemma, which can be proved similarly to \cite[Proposition 2.2.3]{Hao2009}, shows that the graded abelian group
$$\hH^*(\CU):=\hH(\sF^*_{\CU},\sE^*_{\CU},\sI^*_{\CU})$$
of spark characters of the spark complex $\CS_{\CU}$ is isomorphic to $\oH^*(\CU)$ (cf. Proposition \ref{Char-functor}):

\begin{lem} {\rm\cite{Hao2009}}
A quasi-isomorphism $f:(\sF^*,\sE^*,\sI^*)\to(\oF^*,\oE^*,\oI^*)$ of spark complexes induces an isomorphism
$$f_*:\hH(\sF^*,\sE^*,\sI^*)\cong\hH(\oF^*,\oE^*,\oI^*),\quad [a]\mapsto[fa]$$
of the associated graded abelian groups of spark characters.
\end{lem}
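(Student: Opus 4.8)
The plan is to prove directly that, in each degree $k$, the homomorphism $f_*:\hH^k(\sF^*,\sE^*,\sI^*)\to\hH^k(\oF^*,\oE^*,\oI^*)$ produced by Proposition \ref{Char-functor} is both injective and surjective. Besides the three defining properties of a quasi-isomorphism, the argument uses only the uniqueness of the pair $(e,r)$ in the spark equation \eqref{spark-eq} and the disjointness $\oE^k\cap\oI^k=\{0\}$ for $k>0$. The preliminary step is to extract the cohomological consequences of the hypotheses: since $f|_{\sE^*}:\sE^*\to\oE^*$ is an isomorphism of complexes it is an isomorphism on cocycles and on cohomology; and since the inclusions $\sE^*\hookrightarrow\sF^*$ and $\oE^*\hookrightarrow\oF^*$ induce isomorphisms on cohomology (part of the definition of a spark complex) and commute with $f$, a two-out-of-three argument shows that $f$ also induces an isomorphism $H^*(\sF^*)\cong H^*(\oF^*)$. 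Together with the assumed isomorphism $H^*(\sI^*)\cong H^*(\oI^*)$ and the naturality of the inclusion-induced map $j:H^*(\sI^*)\to H^*(\sF^*)$, these are the only ``input isomorphisms'' the proof will invoke.

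For surjectivity, I would take a spark $\bar a\in\oF^k$ with $\od\bar a=\bar e-\bar r$ and correct it within its spark character. First lift $\bar e$ to a (necessarily closed) $e\in\sE^{k+1}$ using that $f|_{\sE^*}$ is onto. Next choose a closed $r\in\sI^{k+1}$ with $f_*[r]=[\bar r]$ using the isomorphism on $H^{k+1}(\sI^*)$, say $f(r)-\bar r=\od\bar t$ with $\bar t\in\oI^k$, and replace $\bar a$ by $\bar a-\bar t$; this does not change the spark character and makes $\od\bar a=f(e-r)$. Since $f(e-r)$ is now exact in $\oF^*$, $e-r$ is exact in $\sF^*$ by the isomorphism on $H^{k+1}(\sF^*)$; writing $e-r=da$ produces a spark $a\in\sF^k$ for which $f(a)-\bar a$ is a cocycle of $\oF^*$, cohomologous to $f(z)$ for some closed $z\in\sF^k$ since $f$ is onto on $H^k(\sF^*)$. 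Then $\bar a$ and $f(a-z)$ differ by a coboundary of $\oF^*$, and $a-z$ is again a spark, so $[\bar a]=f_*[a-z]$.

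For injectivity, suppose $a\in\sF^k$ is a spark with $f(a)=\od\bar b+\bar s$, $\bar b\in\oF^{k-1}$, $\bar s\in\oI^k$. Applying $\od$ and using $da=e-r$ gives $f(e)=f(r)+\od\bar s$, an element of $\oE^{k+1}\cap\oI^{k+1}=\{0\}$, so $e=0$ by injectivity of $f$; then $[f(r)]=0$ in $H^{k+1}(\oI^*)$ forces $[r]=0$ in $H^{k+1}(\sI^*)$, i.e.\ $r=ds_0$ for some $s_0\in\sI^k$, whence $a+s_0$ is a cocycle of $\sF^*$ spark-equivalent to $a$. Now $f(a+s_0)$ is a cocycle of $\oF^*$ whose class lies in the image of $j:H^k(\oI^*)\to H^k(\oF^*)$; transporting it back through the isomorphisms on $H^k(\sI^*)$ and $H^k(\sF^*)$ and the naturality of $j$ yields a closed $s_1\in\sI^k$ with $a+s_0-s_1$ a coboundary of $\sF^*$, that is, $a=db+(s_1-s_0)$ with $s_1-s_0\in\sI^k$, so $[a]=0$.

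No step is deep here; the one place demanding care is the bookkeeping --- checking at each stage that the correction applied to a representative (a coboundary, or a member of $\sI^*$ or $\oI^*$) keeps it inside a single spark character, so that nothing is lost on passing to spark characters --- exactly as in \cite[Proposition 2.2.3]{Hao2009}. Alternatively, the statement follows from the five lemma applied to the natural short exact sequence $0\to H^k(\sF^*)/j(H^k(\sI^*))\to\hH^k(\sF^*,\sE^*,\sI^*)\to\{(e,u)\in Z^{k+1}(\sE^*)\times H^{k+1}(\sI^*)\mid [e]=j(u)\ \text{in}\ H^{k+1}(\sF^*)\}\to 0$, once one checks that $f$ induces isomorphisms on all three terms.
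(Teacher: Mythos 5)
Your proof is correct, and there is nothing in the paper to reconcile it with: the paper does not prove this lemma at all, but quotes it from \cite{Hao2009} with the remark that it ``can be proved similarly to Proposition 2.2.3'' there, and your surjectivity/injectivity argument—correcting a representative within its spark character by elements of $\oI^*$ and coboundaries, then transporting through the isomorphisms on $H^*(\sE^*)$, $H^*(\sI^*)$ and $H^*(\sF^*)$—is exactly the standard chase used in that reference. Two small points are worth making explicit. First, your two-out-of-three step deriving $H^*(\sF^*)\cong H^*(\oF^*)$ uses that the isomorphism $H^*(\sE^*)\cong H^*(\sF^*)$ in the definition of a spark complex is induced by the inclusion $\sE^*\hookrightarrow\sF^*$; the paper's condition (3) states only an abstract isomorphism, but the inclusion-induced reading is the intended one (it is what Harvey--Lawson require, and it is what the proof of Theorem \ref{oSU-def} actually establishes for $\oS_{\CU}$), so your use of it is legitimate, though a sentence acknowledging this would be prudent. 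Second, the degree bookkeeping: you invoke $\oE^{k+1}\cap\oI^{k+1}=\{0\}$ (and, implicitly in showing $\bar e,\bar r$ are closed, $\oE^{k+2}\cap\oI^{k+2}=\{0\}$), which the axioms give only in positive degrees; since all the complexes vanish in negative degrees this covers every nontrivial case $k\geq 0$, but the check deserves a line. The alternative route via the five lemma applied to the natural short exact sequence you display is also sound, and is essentially the structure-theorem proof of the same statement.
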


Furthermore, similar to Proposition \ref{HU-ring} one may define a ring structure on $\hH^*(\CU)$, and the isomorphism
$$\hH^*(\CU)\cong\oH^*(\CU)$$
is in fact a ring isomorphism:

\begin{prop}
$\ophi_*:\hH^*(\CU)\to\oH^*(\CU)$ is an isomorphism of graded commutative rings.
\end{prop}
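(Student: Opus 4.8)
The plan is to leverage everything that has already been set up. By Proposition \ref{phi-quasi-iso}, $\ophi:\CS_{\CU}\to\oS_{\CU}$ is a quasi-isomorphism of spark complexes, so by the preceding Lemma the induced map $\ophi_*:\hH^*(\CU)\to\oH^*(\CU)$ is already an isomorphism of graded abelian groups; in particular it is bijective. Hence the only thing that remains is to check that $\ophi_*$ is multiplicative, i.e., that $\ophi_*([\om]\star[\eta])=(\ophi_*[\om])\star(\ophi_*[\eta])$ for all spark characters $[\om],[\eta]$ on $\CS_{\CU}$, where $\star$ denotes the product on $\hH^*(\CU)$ (respectively $\oH^*(\CU)$) defined, as in Proposition \ref{HU-ring}, from the cup product on $\sF^*_{\CU}$ (respectively $\oF^*_{\CU}$).

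The first step is to show that the cochain map $\ophi:\sF^*_{\CU}\to\oF^*_{\CU}$ preserves cup products, i.e., $\ophi(\om\cup\eta)=\ophi\om\cup\ophi\eta$ for all $\om\in\sC^m(\CU,\Om^j)$ and $\eta\in\sC^n(\CU,\Om^k)$, the cup product on $\sF^*_{\CU}$ being given by the obvious transcription of \eqref{cup-def}. This is a direct computation, parallel to the proof of Proposition \ref{compat-sys-induce-ring-homo}: for $\FI=I_0\dots I_{m+n}\in\MmnVU$ one has $(\phi\FI)_{\leq m}=\phi(\FI_{\leq m})$ and $(\phi\FI)_{\geq m}=\phi(\FI_{\geq m})$, so unravelling \eqref{phi-star-def} and \eqref{cup-def} on both sides and using that iterated restrictions of forms compose (Remark \ref{restrict-indep-embed}), both $(\ophi\om\cup\ophi\eta)_{\FI}$ and $(\ophi(\om\cup\eta))_{\FI}$ collapse to $(-1)^{jn}\,\om_{\phi(\FI_{\leq m})}|_{\tU_{\FI}}\wedge\eta_{\phi(\FI_{\geq m})}|_{\tU_{\FI}}$. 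In the same vein one records that $\ophi$ restricts to the wedge-preserving isomorphism $\Om^*(\CU)\cong\oOm^*(\CU)$ on $\sE^*_{\CU}$ and to a ring homomorphism $\sC^*(\CU,\bbZ)\to\oC^*(\CU,\bbZ)$ on $\sI^*_{\CU}$, both already contained in Proposition \ref{phi-quasi-iso} and its proof.

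With this, multiplicativity of $\ophi_*$ is immediate from the defining formula of $\star$ in (the analogue of) Proposition \ref{HU-ring}. Given a spark $\om\in\sF^k_{\CU}$ with $D\om=e-r$, where $e\in\sE^*_{\CU}$, $r\in\sI^*_{\CU}$, and a spark $\eta\in\sF^l_{\CU}$ with $D\eta=c-s$, applying $\ophi$ yields $\oD(\ophi\om)=\ophi e-\ophi r$ with $\ophi e\in\oE^*_{\CU}$, $\ophi r\in\oI^*_{\CU}$, and likewise $\oD(\ophi\eta)=\ophi c-\ophi s$; therefore
\begin{align*}
(\ophi_*[\om])\star(\ophi_*[\eta])&=[\ophi\om]\star[\ophi\eta]=\big[\ophi\om\cup\ophi c+(-1)^{k+1}\ophi r\cup\ophi\eta\big]\\
&=\big[\ophi\big(\om\cup c+(-1)^{k+1}r\cup\eta\big)\big]=\ophi_*\big([\om]\star[\eta]\big),
\end{align*}
where the second equality uses that $\ophi$ is additive and preserves cup products. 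Combined with the bijectivity noted above, this shows that $\ophi_*$ is an isomorphism of graded commutative rings.

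The only step that requires any work is the verification that $\ophi$ preserves cup products, and this is a routine index manipulation of exactly the same flavour as the proof of Proposition \ref{compat-sys-induce-ring-homo}; so no serious obstacle is anticipated.
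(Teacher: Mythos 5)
Your proposal is correct and follows essentially the same route as the paper: the paper likewise reduces the statement to the bijectivity already supplied by Proposition \ref{phi-quasi-iso} together with the preceding lemma, and then verifies that $\ophi$ preserves the cup product \eqref{cup-def} by exactly the index computation you describe (using $(\phi\FI)_{\leq m}=\phi(\FI_{\leq m})$, $(\phi\FI)_{\geq m}=\phi(\FI_{\geq m})$ and Remark \ref{restrict-indep-embed}). Your explicit check that cup-preservation yields multiplicativity of $\ophi_*$ via the product formula of Proposition \ref{HU-ring} is just the small step the paper leaves implicit in ``it suffices to prove that $\ophi$ preserves the cup products.''
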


\begin{proof}
It suffices to prove that $\ophi$ preserves the cup products on $\sF^*_{\CU}$, also defined by \eqref{cup-def}. Indeed, for any $\om\in\sC^m(\CV,\Om^j)$, $\eta\in\sC^n(\CV,\Om^k)$ and $\FI=I_0\dots I_{m+n}\in\MmnVU$,
\begin{align*}
(\ophi\om\cup\ophi\eta)_{\FI}&=(-1)^{jn}(\ophi\om)_{\FI_{\leq m}}|_{\tU_{\FI}}\wedge(\ophi\eta)_{\FI_{\geq m}}|_{\tU_{\FI}}&(\text{Eq. \eqref{cup-def}})\\
&=(-1)^{jn}\om_{\phi\FI_{\leq m}}\big|_{\tU_{\FI_{\leq m}}}\big|_{\tU_{\FI}}\wedge\eta_{\phi\FI_{\geq m}}\big|_{\tU_{\FI_{\geq m}}}\big|_{\tU_{\FI}}&(\text{Eq. \eqref{phi-star-def}})\\
&=(-1)^{jn}\om_{(\phi\FI)_{\leq m}}\big|_{\tU_{\FI}}\wedge\eta_{(\phi\FI)_{\geq m}}\big|_{\tU_{\FI}}&(\text{Remark \ref{restrict-indep-embed}})\\
&=(-1)^{jn}\om_{(\phi\FI)_{\leq m}}\big|_{\tU_{\phi\FI}}\big|_{\tU_{\FI}}\wedge\eta_{(\phi\FI)_{\geq m}}\big|_{\tU_{\phi\FI}}\big|_{\tU_{\FI}}&(\text{Remark \ref{restrict-indep-embed}})\\
&=(\ophi(\om\cup\eta))_{\FI},&(\text{Eq. \eqref{cup-def} \& \eqref{phi-star-def}})
\end{align*}
which completes the proof.
\end{proof}


\begin{thebibliography}{10}

\bibitem{Adem2007}
A.~Adem, J.~Leida, and Y.~Ruan.
\newblock {\em Orbifolds and Stringy Topology}, volume 171 of {\em Cambridge
  Tracts in Mathematics}.
\newblock Cambridge University Press, Cambridge, 2007.

\bibitem{Borceux1994a}
F.~Borceux.
\newblock {\em Handbook of Categorical Algebra: Volume 1, Basic Category
  Theory}, volume~50 of {\em Encyclopedia of Mathematics and its Applications}.
\newblock Cambridge University Press, Cambridge, 1994.

\bibitem{Borzellino2008}
J.~E. Borzellino and V.~Brunsden.
\newblock A manifold structure for the group of orbifold diffeomorphisms of a
  smooth orbifold.
\newblock {\em Journal of Lie Theory}, 18(4):979--1007, 2008.

\bibitem{Bott1982}
R.~Bott and L.~W. Tu.
\newblock {\em Differential Forms in Algebraic Topology}, volume~82 of {\em
  Graduate Texts in Mathematics}.
\newblock Springer, New York, 1982.

\bibitem{Bredon1997}
G.~E. Bredon.
\newblock {\em Sheaf Theory}, volume 170 of {\em Graduate Texts in
  Mathematics}.
\newblock Springer, New York, second edition, 1997.

\bibitem{Cheeger1985}
J.~Cheeger and J.~Simons.
\newblock Differential characters and geometric invariants.
\newblock In J.~C. Alexander and J.~L. Harer, editors, {\em Geometry and
  Topology: Proceedings of the Special Year held at the University of Maryland,
  College Park 1983-1984}, volume 1167 of {\em Lecture Notes in Mathematics},
  pages 50--80. Springer, Berlin-Heidelberg, 1985.

\bibitem{Chen2002}
W.~Chen and Y.~Ruan.
\newblock Orbifold {Gromov--Witten} theory.
\newblock In {\em Orbifolds in mathematics and physics}, volume 310 of {\em
  Contemporary Mathematics}, pages 25--86. American Mathematical Society,
  Providence, 2002.

\bibitem{Chen2004}
W.~Chen and Y.~Ruan.
\newblock A new cohomology theory of orbifold.
\newblock {\em Communications in Mathematical Physics}, 248(1):1--31, 2004.

\bibitem{Du2016}
C.-Y. Du and X.~Zhao.
\newblock Spark and {Deligne--Beilinson} cohomology on orbifolds.
\newblock {\em Journal of Geometry and Physics}, 104:277--290, 2016.

\bibitem{Felisatti2012}
M.~Felisatti and F.~Neumann.
\newblock Secondary theories for {\'e}tale groupoids.
\newblock In J.~I. Burgos~Gil, R.~de~Jeu, J.~D. Lewis, J.~C. Naranjo,
  W.~Raskind, and X.~Xarles, editors, {\em Regulators}, volume 571 of {\em
  Contemporary Mathematics}, pages 135--151. American Mathematical Society,
  Providence, 2012.

\bibitem{Gillet1989}
H.~Gillet and C.~Soul{\'e}.
\newblock Arithmetic chow groups and differential characters.
\newblock In J.~F. Jardine and V.~P. Snaith, editors, {\em Algebraic K-Theory:
  Connections with Geometry and Topology}, volume 279 of {\em NATO ASI Series},
  pages 29--68. Springer, Dordrecht, 1989.

\bibitem{Hao2009}
N.~Hao.
\newblock {\em D-bar Spark Theory and Deligne Cohomology}.
\newblock PhD thesis, Stony Brook University, New York, 2009.

\bibitem{Harvey1977}
R.~Harvey.
\newblock Holomorphic chains and their boundaries.
\newblock In R.~O. {Wells, Jr.}, editor, {\em Several Complex Variables, Part
  1}, volume~30 of {\em Proceedings of Symposia in Pure Mathematics}, pages
  309--382. American Mathematical Society, Providence, 1977.

\bibitem{Harvey2006}
R.~Harvey and B.~Lawson.
\newblock From sparks to grundles --- differential characters.
\newblock {\em Communications in Analysis and Geometry}, 14(1):25--58, 2006.

\bibitem{Harvey2003}
R.~Harvey, B.~Lawson, and J.~Zweck.
\newblock The de {Rham--Federer} theory of differential characters and character
  duality.
\newblock {\em American Journal of Mathematics}, 125(4):791--847, 2003.

\bibitem{Leinster2004}
T.~Leinster.
\newblock {\em Higher Operads, Higher Categories}, volume 298 of {\em London
  Mathematical Society Lecture Note Series}.
\newblock Cambridge University Press, Cambridge, 2004.

\bibitem{Lupercio2001}
E.~Lupercio and B.~Uribe.
\newblock Deligne cohomology for orbifolds, discrete torsion and {B}-fields.
\newblock In A.~Cardona, S.~Paycha, and H.~Ocampo, editors, {\em Proceedings of
  the Summer School ``Geometric and Topological methods for Quantum Field
  Theory''}, pages 468--482, Villa de Leyva, 2001.

\bibitem{Lupercio2006}
E.~Lupercio and B.~Uribe.
\newblock Differential characters on orbifolds and string connections {I}:
  Global quotients.
\newblock In T.~J. Jarvis, T.~Kimura, and A.~Vaintrob, editors, {\em
  Gromov-Witten Theory of Spin Curves and Orbifolds}, volume 403 of {\em
  Contemporary Mathematics}, pages 127--142. American Mathematical Society,
  Providence, 2006.

\bibitem{MacLane1998}
S.~Mac~Lane.
\newblock {\em Categories for the Working Mathematician}, volume~5 of {\em
  Graduate Texts in Mathematics}.
\newblock Springer, New York, second edition, 1998.

\bibitem{Moerdijk2002}
I.~Moerdijk.
\newblock Orbifolds as groupoids: an introduction.
\newblock In A.~Adem, J.~Morava, and Y.~Ruan, editors, {\em Orbifolds in
  Mathematics and Physics}, volume 310 of {\em Contemporary Mathematics}, pages
  205--222. American Mathematical Society, Providence, 2002.

\bibitem{Moerdijk1997}
I.~Moerdijk and D.~A. Pronk.
\newblock Orbifolds, sheaves and groupoids.
\newblock {\em K-Theory}, 12:3--21, 1997.

\bibitem{Moerdijk1999}
I.~Moerdijk and D.~A. Pronk.
\newblock Simplicial cohomology of orbifolds.
\newblock {\em Indagationes Mathematicae}, 10(2):269--293, 1999.

\bibitem{Petersen2006}
P.~Petersen.
\newblock {\em Riemannian Geometry}, volume 171 of {\em Graduate Texts in
  Mathematics}.
\newblock Springer, New York, 2006.

\bibitem{Pronk2016}
D.~A. Pronk, L.~Scull, and M.~Tommasini.
\newblock Atlases for ineffective orbifolds.
\newblock {\em arXiv:1606.04439}, 2016.

\bibitem{Satake1956}
I.~Satake.
\newblock On a generalization of the notion of manifold.
\newblock {\em Proceedings of the National Academy of Sciences of the United
  States of America}, 42(6):359--363, 1956.

\bibitem{Simons2008}
J.~Simons and D.~Sullivan.
\newblock Axiomatic characterization of ordinary differential cohomology.
\newblock {\em Journal of Topology}, 1(1):45--56, 2008.

\bibitem{Tommasini2012}
M.~Tommasini.
\newblock Orbifolds and groupoids.
\newblock {\em Topology and its Applications}, 159(3):756--786, 2012.

\bibitem{Tommasini2016}
M.~Tommasini.
\newblock A bicategory of reduced orbifolds from the point of view of
  differential geometry.
\newblock {\em Journal of Geometry and Physics}, 108:117--137, 2016.

\bibitem{Tu2011}
L.~W. Tu.
\newblock {\em An Introduction to Manifolds}.
\newblock Universitext. Springer, New York, 2011.

\bibitem{Warner1983}
F.~W. Warner.
\newblock {\em Foundations of Differentiable Manifolds and Lie Groups},
  volume~94 of {\em Graduate Texts in Mathematics}.
\newblock Springer, New York, 1983.

\end{thebibliography}

\end{document}